\theoremstyle{plain}
\newtheorem{thm}{Theorem}[section]
\newtheorem*{thm*}{Theorem}
\newtheorem{lem}[thm]{Lemma}
\newtheorem{prop}[thm]{Proposition}
\newtheorem{cor}[thm]{Corollary}
\theoremstyle{definition}
\newtheorem{defn}[thm]{Definition}
\newtheorem{nota}[thm]{Notation}
\theoremstyle{remark}
\newtheorem{rem}[thm]{Remark}
\Crefname{thm}{Theorem}{Theorems}
\Crefname{lm}{Lemma}{Lemmata}
\Crefname{prop}{Proposition}{Propositions}
\Crefname{cor}{Corollary}{Corollaries}
\Crefname{hyp}{Hypothesis}{Hypotheses}
\Crefname{q}{Question}{Questions}
\Crefname{defn}{Definition}{Definitions}
\Crefname{nota}{Notation}{Notations}
\Crefname{ex}{Example}{Examples}
\Crefname{xca}{Exercise}{Exercises}
\Crefname{rem}{Remark}{Remarks}
\Crefname{constr}{Construction}{Constructions}
\Crefname{conj}{Conjecture}{Conjecture}
\newcommand{\Z}{\mathbb{Z}}
\newcommand{\Q}{\mathbb{Q}}
\newcommand{\Acal}{\mathcal{A}}
\newcommand{\Ccal}{\mathcal{C}}
\newcommand{\Dcal}{\mathcal{D}}
\newcommand{\Ecal}{\mathcal{E}}
\newcommand{\Ical}{\mathcal{I}}
\newcommand{\Lcal}{\mathcal{L}}
\newcommand{\Scal}{\mathcal{S}}
\newcommand{\Ucal}{\mathcal{U}}
\newcommand{\id}{\textup{id}}
\newcommand{\Hom}{\textup{Hom}}
\newcommand{\End}{\textup{End}}
\newcommand{\Fin}{\mathsf{Fin}}
\newcommand{\Ab}{\mathbf{Ab}}
\newcommand{\Hbb}{\mathbb{H}} 
\newcommand{\Fun}{\mathbf{Fun}} 
\newcommand*{\isoarrow}[1]{\arrow[#1,"\rotatebox{90}{\(\sim\)}"]}
\newcommand{\A}{\mathbf{A}} 
\newcommand{\conn}{\mathsf{conn}}
\newcommand{\modules}{\mathsf{mod}}
\title{On the functoriality of universal abelian factorizations}
\author{Luca Terenzi}
\address{Luca Terenzi \newline
	\indent UMPA, ENS de Lyon \newline
	\indent 46 Allée d'Italie, 69007 Lyon (France)}
\email{\normalfont\href{mailto:luca.terenzi@ens-lyon.fr}{luca.terenzi@ens-lyon.fr}}
\begin{document}

\maketitle

\begin{abstract}
	In this note, we discuss several aspects of the functoriality of universal abelian factorizations associated to representations of quivers into abelian categories. After recalling the general construction of universal abelian factorizations, we review the canonical lifting procedures for exact functors and natural transformations thereof (already studied by F. Ivorra in a slightly different axiomatic framework) and we describe how these interact with general categorical constructions; for sake of simplicity, we mostly focus on the case where the quivers and representations considered are defined by actual (additive) categories and (additive) functors. We then extend these known results in two directions which have not been explored explicitly in the existing literature: on the one side, to the setting of multi-linear functors; on the other side, to the setting of abelian fibered categories. Combining these two extensions, we are able to discuss the case of monoidal structures on abelian fibered categories; the latter case has been successfully applied in the author's construction of the tensor structure on perverse Nori motives.
\end{abstract}

\tableofcontents

\section*{Introduction}

\subsection*{Motivation and goal of the paper}

Let $\beta: \Dcal \rightarrow \Acal$ be an additive functor from an additive category $\Dcal$ to an abelian category $\Acal$. As shown in \cite{BV-P}, there exists an abelian category $\A(\beta)$ providing a universal factorization of $\beta$ of the form
\begin{equation*}
	\beta: \Dcal \xrightarrow{\pi} \A(\beta) \xrightarrow{\iota} \Acal
\end{equation*}
where $\pi: \Dcal \rightarrow \A(\beta)$ is an additive functor and $\iota: \A(\beta) \rightarrow \Acal$ is a faithful exact functor. The abelian category $\A(\beta)$ is known as the \textit{universal abelian factorization} associated to the additive functor $\beta$: one can view it as the finest possible non-full abelian subcategory of $\Acal$ keeping track of all the information coming from $\Dcal$ via the functor $\beta$. 

This categorical construction is motivated by the theory of mixed motives and crucial for the emerging theory of perverse Nori motives introduced by F. Ivorra and S. Morel in \cite{IM19}. The results of \cite{IM19} and \cite{Ter23Nori} completely describe the functoriality of the categories of perverse Nori motives with respect to Grothendieck's six functor formalism. The basic building blocks for the six functor formalism on perverse Nori motives all come from the abstract lifting properties of universal abelian factorizations with respect to exact functors and natural transformations thereof. 

The main purpose of this note is to study these abstract aspects of universal abelian factorizations systematically. Some of the main results collected here have been already obtained before, based on a different axiomatic framework, and can be found in \cite{Ara-mot,Ara-rev}, \cite{HMS17}, \cite{BVHP20}, \cite{IvPerv}, and \cite{IM19}. Our motivation to write the present note is twofold: firstly, in order to give a uniform and self-contained treatment of the previously known results; secondly, to fill some gaps in the existing literature. Several categorical constructions recurring in \cite{IM19} and \cite{Ter23Nori} fit perfectly into the general framework of the present note; in particular, certain technical proofs of \cite{Ter23Fib} become both considerably shorter and conceptually cleaner in light of the abstract results collected here. 

In view of the generality and usefulness of the concept of universal abelian factorizations, we believe that the results of the present note will find other applications within the theory of Nori motives and beyond.

\subsection*{History and previous work}

In the late 1990's, M. Nori found an elegant way to construct a candidate for the conjectural abelian category of mixed motives over a field. Nori's construction is categorical-theoretic at heart, since it relies on a general result concerning representations of quivers into the abelian category of finitely generated modules over a Noetherian commutative ring $R$. He showed that any such quiver representation 
\begin{equation*}
	\beta: \Dcal \rightarrow \modules_R^{f.g.}
\end{equation*}
admits a universal factorization of the form
\begin{equation*}
	\beta: \Dcal \xrightarrow{\pi} \A(\beta) \xrightarrow{\iota} \modules_R^{f.g.}
\end{equation*} 
where $\A(\beta)$ is an $R$-linear abelian category, $\pi$ is a quiver representation, and $\iota: \A(\beta) \rightarrow \modules_R^{f.g.}$ is a faithful exact $R$-linear functor. The universal property amounts to the fact that, for any other factorization
\begin{equation*}
	\beta: \Dcal \xrightarrow{\pi'} \Acal \xrightarrow{\iota'} \modules_R^{f.g.}
\end{equation*}
of the same form, there exists a unique faithful exact $R$-linear functor $\A(\beta) \rightarrow \Acal$ making the diagram
\begin{equation*}
	\begin{tikzcd}
		&& \A(\beta) \arrow{drr}{\iota} \arrow[dashed]{dd}{\exists!} \\
		\Dcal \arrow{urr}{\pi} \arrow{drr}{\pi'} &&&& \modules_R^{f.g.} \\
		&& \Acal \arrow{urr}{\iota'}
	\end{tikzcd}
\end{equation*}
commute. The category $\A(\beta)$ is constructed explicitly in terms of the representation $\beta$ as
\begin{equation*}
	\A(\beta) := 2-\varinjlim_{\Dcal' \in \Fin(\Dcal)} \End(\beta|_{\Dcal'})-\modules_R^{f.g.},
\end{equation*}
where $\Fin(\Dcal)$ denotes the filtered poset of all finite full subquivers of $\Dcal$ and, for each inclusion $\Dcal' \subset \Dcal''$ in $\Fin(\Dcal)$, the corresponding functor $\End(\beta|_{\Dcal'})-\modules_R^{f.g.} \rightarrow \End(\beta|_{\Dcal'})-\modules_R^{f.g.}$ in the above $2$-colimit is just the restriction-of-scalars functor induced by the obvious $R$-algebra homomorphism $\End(\beta|_{\Dcal''}) \rightarrow \End(\beta|_{\Dcal'})$.

It is clear from this description that the association $\beta \mapsto \A(\beta)$ is functorial in the following sense: given two quiver representations
\begin{equation*}
	\beta_1: \Dcal_1 \rightarrow \modules_R^{f.g.}, \quad \beta_2: \Dcal_2 \rightarrow \modules_R^{f.g.}
\end{equation*}
together with a map of quivers $\phi: \Dcal_1 \rightarrow \Dcal_2$ fitting into a commutative diagram of the form
\begin{equation*}
	\begin{tikzcd}
		\Dcal_1 \arrow{drr}{\beta_1} \arrow{rrrr}{\phi} &&&& \Dcal_2 \arrow{dll}{\beta_2} \\
		&& \modules_R^{f.g.}
	\end{tikzcd}
\end{equation*}
there exists a unique exact $R$-linear functor $\bar{\phi}: \A(\beta_1) \rightarrow \A(\beta_2)$ making the whole diagram
\begin{equation*}
	\begin{tikzcd}
		\Dcal_1 \arrow{d}{\pi_1} \arrow{rrrr}{\phi} &&&& \Dcal_2 \arrow{d}{\pi_2} \\
		\A(\beta_1) \arrow{drr}{\iota_1} \arrow[dashed]{rrrr}{\bar{\phi}} &&&& \A(\beta_2) \arrow{dll}{\iota_2} \\
		&& \modules_R^{f.g.}
	\end{tikzcd}
\end{equation*}
commute. It is also clear that the association $\phi \mapsto \bar{\phi}$ is compatible with composition of maps of quivers in the obvious way. Motivated by his applications to mixed motives, Nori also described a general recipe to construct a tensor product on the category $\A(\beta)$: the idea is to start from a pairing $- \times -: \Dcal \times \Dcal \rightarrow \Dcal$ making the diagram
\begin{equation*}
	\begin{tikzcd}
		\Dcal \times \Dcal \arrow{rr}{- \times -} \arrow{d}{\beta \times \beta} && \Dcal \arrow{d}{\beta} \\
		\modules_R^{f.g.} \times \modules_R^{f.g.} \arrow{rr}{- \otimes_R -} && \modules_R^{f.g.}
	\end{tikzcd}
\end{equation*}
commute up to natural isomorphism, and extend such a pairing along each step of the construction of $\A(\beta)$ in order to get a bi-functor $- \otimes -: \A(\beta) \times \A(\beta) \rightarrow \A(\beta)$ making the whole diagram
\begin{equation*}
	\begin{tikzcd}
		\Dcal \times \Dcal \arrow{rr}{- \times -} \arrow{d}{\pi \times \pi} && \Dcal \arrow{d}{\pi} \\
		\A(\beta) \times \A(\beta) \arrow[dashed]{rr}{- \otimes_R -} \arrow{d}{\iota \times \iota} && \A(\beta) \arrow{d}{\iota} \\
		\modules_R^{f.g.} \times \modules_R^{f.g.} \arrow{rr}{- \otimes_R -} && \modules_R^{f.g.}
	\end{tikzcd}
\end{equation*}
commute up to natural isomorphism. In order to make this idea work, one has to make sure that the representation $\beta: \Dcal \rightarrow \modules_R^{f.g.}$ takes values into flat $R$-modules and that, in addition, iterated kernels of such $R$-modules are still flat. Thus, in concrete applications, one is basically forced to assume that either $R$ is a field or $R$ is a Dedekind domain and the $R$-module $\beta(D)$ is torsion-free for all $D \in \Dcal$. See \cite[\S~II]{HMS17} for a thorough exposition of Nori's constructions and results.

Until the last decade, the range of applications of Nori's categorical construction remained limited to representations of quivers into $\modules_R^{f.g.}$, possibly with further restrictions on the ring $R$. While this setting was certainly sufficient for Nori's original applications to mixed motives, it appeared to be too rigid for possible generalization of Nori's theory of motives to a theory of motivic sheaves. The basic idea would be to mimic Nori's construction starting from a representation of some quiver into the abelian categories of constructible or perverse sheaves over algebraic varieties, which are not of the form $\modules_R^{f.g.}$ despite being Noetherian (and even Artinian, in the case of perverse sheaves). 

In the setting of ordinary constructible sheaves, Arapura introduced in \cite{Ara-mot} a clever yet notationally intricate variant of Nori's construction: roughly, one filters the category of constructible sheaves over a variety $S$ by finer and finer stratifications of $S$, considers collections of stalk functors associated to the different strata, and compares the functors resulting from different choices of stalks. 

Of course, Arapura's approach cannot be applied to the setting of perverse sheaves over general varieties. In this setting, Ivorra observed in \cite{IvPerv} that Nori's construction can be extended to representation of quivers into arbitrary finite and hom-finite abelian categories: this works perfectly for representations into categories of perverse sheaves (whereas it would not work for representations into ordinary constructible sheaves over general varieties, since these do not form Artinian categories). Furthermore, it makes it very easy to understand the compatibility of the construction with respect to morphisms of quiver representations: given two quiver representations into finite and hom-finite abelian categories
\begin{equation*}
	\beta_1: \Dcal_1 \rightarrow \Acal_1, \quad \beta_2: \Dcal_2 \rightarrow \Acal_2
\end{equation*}
together with a map of quivers $\phi: \Dcal_1 \rightarrow \Dcal_2$ and an exact functor $\Phi: \Acal_1 \rightarrow \Acal_2$ making the diagram
\begin{equation*}
	\begin{tikzcd}
		\Dcal_1 \arrow{d}{\beta_1} \arrow{rrrr}{\phi} &&&& \Dcal_2 \arrow{d}{\beta_2} \\
		\Acal_1 \arrow{rrrr}{\Phi} &&&& \Acal_2
	\end{tikzcd}
\end{equation*}
commute up to natural isomorphism, there exists an essentially unique exact functor $\bar{\phi}: \A(\beta_1) \rightarrow \A(\beta_2)$ making the whole diagram
\begin{equation*}
	\begin{tikzcd}
		\Dcal_1 \arrow{d}{\pi_1} \arrow{rrrr}{\phi} &&&& \Dcal_2 \arrow{d}{\pi_2} \\
		\A(\beta_1) \arrow{d}{\iota_1} \arrow[dashed]{rrrr}{\bar{\phi}} &&&& \A(\beta_2) \arrow{d}{\iota_2} \\
		\Acal_1 \arrow{rrrr}{\Phi} &&&& \Acal_2
	\end{tikzcd}
\end{equation*} 
commute up to natural isomorphism. It is worth stressing that Ivorra's result is more precise than what we have just asserted, since it includes the datum of a natural isomorphism $\kappa: \Phi \circ \beta_1 \xrightarrow{\sim} \beta_2 \circ \phi$ into the input and provides an explicit factorization of such a natural isomorphism as part of the output. A similar result holds on the level of natural transformations; again, in order to make the construction work, one needs to describe both the input and the output very carefully. 

It is natural to wonder whether Nori's construction of universal abelian factorizations can be further generalized to representations of quivers into arbitrary abelian categories. This question was given an affirmative answer thanks to O. Caramello's investigations around the categorical logic of Nori motives, which culminated in the paper \cite{BVCL18}; in the subsequent paper \cite{BV-P}, the same result was translated into the familiar language of categories. The construction of the universal abelian factorization of a general quiver representation $\beta: \Dcal \rightarrow \Acal$ can be described in a uniform and streamlined way as follows:
\begin{enumerate}
	\item[(Step 1)] Make the quiver $\Dcal$ into a category $\Dcal^{cat}$, so that the representation $\beta$ becomes a functor
	\begin{equation*}
		\beta^{cat}: \Dcal^{cat} \rightarrow \Acal.
	\end{equation*} 
	\item[(Step 2)] Make the category $\Dcal^{cat}$ into an additive category $\Dcal^{\oplus}$, so that the functor $\beta^{cat}$ extends to an additive functor 
	\begin{equation*}
		\beta^{\oplus}: \Dcal^{\oplus} \rightarrow \Acal.
	\end{equation*}
	\item[(Step 3)] Form the abelian hull $\A(\Dcal^{\oplus})$, so that the additive functor $\beta^{\oplus}$ induces an exact functor 
	\begin{equation*}
		\beta^+: \A(\Dcal^{\oplus}) \rightarrow \Acal.
	\end{equation*}
	\item[(Step 4)] Define the Serre quotient
	\begin{equation*}
		\A(\beta) := \A(\Dcal^{\oplus})/\ker(\beta^+).
	\end{equation*} 
\end{enumerate}
Here, the first non-trivial passage is that from the additive category $\Dcal^+$ to its \textit{abelian hull} $\A(\Dcal^+)$: the latter is an abelian category on which $\Dcal^+$ embeds fully faithfully, with the universal property that every additive functor from $\Dcal^+$ to some abelian category extends uniquely to an exact functor from the entire $\A(\Dcal^+)$. The existence of the category $\A(\Dcal^+)$ was proved by P. Freyd in the classical paper \cite{Freyd}. Defining $\A(\beta)$ as a Serre quotient as in the last passage amounts to rendering the exact functor $\beta^+: \A(\Dcal^+) \rightarrow \Acal$ a faithful exact functor in a universal way; with this explicit description, checking that the category $\A(\beta)$ has the same universal property as Nori's abelian category essentially reduces to writing down the various universal properties used in the passages above in sequence.
In the case of quiver representations arising from homological functors, a more streamlined description of the same construction was given in \cite{IM19}, based on the study of Freyd's abelian hulls of triangulated categories from \cite{NeeTri}. Around the same time, the case of representations into abelian tensor categories was explored in detail in \cite{BVHP20}. However, the compatibility of the latter construction of $\A(\beta)$ with respect to functors and natural transformations has not been explored yet in full generality. 

\subsection*{Main results}

In the present note, we rephrase and extend the functoriality results of \cite{IvPerv} and \cite{BVHP20} using the most general construction of universal abelian factorization given in \cite{BV-P}. Some of the fundamental results collected here are just obvious adaptations of those presented in \cite{IvPerv}, using slightly more streamlined notation. 

For sake of simplicity, we refrain from considering the general framework of representation of arbitrary quivers into abelian categories: all quivers and maps of quivers that we consider in this paper are defined by actual (additive) categories and (additive) functors. This choice is motivated by our application to perverse Nori motives and has two advantages: on the one hand, it allows us to avoid phrasing the main notions in the language of quivers and discussing their (often straightforward) extension to the additive level in detail; on the other hand, it allows us to study categorical notions (such as adjunctions, equivalences, and $2$-colimits) that are needed for our applications but lack a good analogue in the world of quivers.

The crucial functoriality results concern lifting of exact functors (\Cref{prop_lift-exfunct}) and natural transformations thereof (\Cref{prop_nat-lift}); combining these two fundamental lifting principles, we can easily study the compatibility of universal abelian factorizations with several common categorical notions, including adjunctions (\Cref{cor_lift-adj}) and equivalences (\Cref{cor_lift-equiv}).

Our main original contribution concern the extension of these functoriality results to the case of multi-linear functors (\Cref{prop:lift-multiex} and \Cref{prop:lift-multinat}) and to the setting of abelian fibered categories (\Cref{prop_lifting-fib} and \Cref{prop:lift-morfib}); this leads to a similar extension to the setting of monoidal structures on abelian fibered categories (\Cref{prop:lift-ETS}). 

Along the way, we also collect useful observations about the interaction between universal abelian factorizations and several common categorical constructions, such as direct products (\Cref{cor:A(D)-prod}) and filtered $2$-colimits (\Cref{lem_A-2colim}). 

\subsection*{Structure of the paper}

In \Cref{sect:rec-UAF} we review the construction of universal abelian factorizations described in \cite{BV-P} in the case of additive quiver representations; this gives us the opportunity to fix the general notation used throughout the paper. 

The main technical results are collected in \Cref{sect:lift-exfun} and \Cref{sect:lift-nat}, where we explain how to lift exact functors (\Cref{prop_lift-exfunct}) and natural transformations thereof (\Cref{prop_nat-lift}) to the level of universal abelian factorizations, respectively. In order to state our results in a precise form, it is convenient to introduce informally a suitable '$2$-category of quiver representations' where objects are single quiver representations into abelian categories while $1$-morphisms (resp. $2$-morphisms) consist of pairs of functors (resp. natural transformations) on the quiver level and on the abelian level satisfying a certain compatibility condition (\Cref{defn:1-mor_repr} and \Cref{defn:2mor-repr}). As a particular case, we show that the lifting procedure naturally respects adjunctions (\Cref{cor_lift-adj}) and equivalences (\Cref{cor_lift-equiv}).
Moreover, we are able to study the compatibility of universal abelian factorizations with diagram categories to some extent: most notably, we show that universal abelian factorizations are compatible with finite direct products of quiver representations (see \Cref{cor:A(D)-prod}). 

This opens the way to extending the main lifting results to the case of multi-exact functors and natural transformations thereof in \Cref{sect:multi}. Arguing by induction on the number of factors involved, we show that it is possible to lift both multi-exact functors (\Cref{prop:lift-multiex}) and their natural transformations (\Cref{prop:lift-multinat}) in the expected way. This represents our first main original contribution to the existing literature.

Our second main contribution concerns the setting of abelian categories fibered over a base, which is the subject of \Cref{sect:lift-fib}. In this case, an easy combination of the results of \Cref{sect:lift-exfun} and \Cref{sect:lift-nat} allows us to deduce analogous lifting results for single abelian fibered categories (\Cref{prop_lifting-fib}) as well as for exact morphisms of such (\Cref{prop:lift-morfib}).

The final \Cref{sect:lift-ETS} is devoted to the study of monoidal structures on universal abelian factorizations. Applying the general results of \Cref{sect:multi}, we show that it is possible to lift exact monoidal structures (\Cref{prop:lift-ETS}) together with associativity and commutativity constraints (\Cref{lem:lift-asso+symm} and \Cref{lem:lift-ac}).
In view of our applications, we have chosen to phrase everything in terms of the external tensor product following \cite{Ter23Fib}; however, similar results hold with respect to the usual internal tensor product (see \cite[Thm.~6.1, Thm.~8.1]{Ter23Fib} for the dictionary relating the two formulations). Due of lack of applications, we did not treat the case of unit constraints (see \cite[\S~5]{Ter23Fib}), but this could be easily included.

\subsection*{Acknowledgments}

The present note originated from an appendix of my Ph.D. thesis, written at the University of Freiburg under the supervision of Annette Huber-Klawitter. It is a pleasure to thank her for introducing me to the idea of universal abelian factorizations and for many useful discussions around the contents of this note, as well as for her constant support and encouragement.

\section{Recollections on universal abelian factorizations}\label{sect:rec-UAF}

We start by reviewing the construction of universal abelian factorizations of representations of additive categories into abelian categories as described in \cite{BV-P}, as well as the basic properties characterizing this construction. All the results obtained in the next sections will essentially be formal consequence of these properties. 

\subsection{Review of Freyd's abelian hull}

In the first place, we review the main properties of Freyd's construction of abelian hulls from \cite{Freyd}. Recall that, for any category $\Ccal$, a presheaf $F \in \Fun(\Ccal^{op},\Ab)$ is called \textit{finitely presented} if it can be written as a cokernel of representable presheaves.

\begin{nota}
	Let $\Dcal$ be an additive category.
	\begin{itemize}
		\item We define $\mathbf{R}(\Dcal)$ as the full additive subcategory of $\Fun(\Dcal^{op},\Ab)$ consisting of all finitely presented presheaves. 
		\item Dually, we define $\mathbf{L}(\Dcal) := \mathbf{R}(\Dcal^{op})^{op}$.
	\end{itemize}
\end{nota}

Note that any additive category $\Dcal$ admits two canonical fully faithful additive embeddings
\begin{equation*}
	\Dcal \hookrightarrow \mathbf{R}(\Dcal), \qquad \Dcal \hookrightarrow \mathbf{L}(\Dcal),
\end{equation*} 
the first one induced by the Yoneda embedding $\Dcal \hookrightarrow \Fun(\Dcal^{op},\Ab)$ and the second one arising from the embedding $\Dcal^{op} \hookrightarrow \mathbf{R}(\Dcal^{op})$. Combining the two constructions, we also obtain canonical fully faithful additive embeddings
\begin{equation*}
	\Dcal \hookrightarrow \mathbf{L}(\mathbf{R}(\Dcal)), \qquad \Dcal \hookrightarrow \mathbf{R}(\mathbf{L}(\Dcal)).
\end{equation*}
With this notation, we can summarize Freyd's results as follows:

\begin{thm}[P. Freyd]\label{thm_A(D)}
	Let $\Dcal$ be an additive category. Then the following statements hold:
	\begin{enumerate}
		\item Define the additive category 
		\begin{equation}\label{A(D)}
			\A(\Dcal) := \mathbf{L}(\mathbf{R}(\Dcal)).
		\end{equation} 
	    Then:
		\begin{enumerate}
			\item[(i)] The category $\A(\Dcal)$ is abelian.
			\item[(ii)] There exists a canonical additive equivalence of categories
			\begin{equation}\label{eq:LRRL}
				\A(\Dcal) := \mathbf{L}(\mathbf{R}(\Dcal)) = \mathbf{R}(\mathbf{L}(\Dcal))
			\end{equation}  
		    making the diagram
			\begin{equation}\label{dia:LRRL}
				\begin{tikzcd}
					& \Dcal \arrow{dl} \arrow{dr} \\
					\mathbf{L}(\mathbf{R}(\Dcal)) \arrow[equal,dashed]{rr} && \mathbf{R}(\mathbf{L}(\Dcal))
				\end{tikzcd}
			\end{equation}
		    commute.
			\item[(iii)] Every object of $\Dcal$ is projective in $\A(\Dcal)$ via the canonical embedding $\Dcal \hookrightarrow \A(\Dcal)$.
		\end{enumerate} 
		\item For every abelian category $\Acal$, the following statements hold:
		\begin{enumerate}
			\item[(i)] Every additive functor $\beta: \Dcal \rightarrow \Acal$ extends uniquely to an exact functor 
		    \begin{equation*}
		    	\beta^+: \A(\Dcal) \rightarrow \Acal.
		    \end{equation*}
			\item[(ii)] Given two additive functors $\beta_1, \beta_2: \Dcal \rightarrow \Acal$, every natural transformation $\kappa: \beta_1 \rightarrow \beta_2$ extends uniquely to a natural transformation between functors $\A(\Dcal) \rightarrow \Acal$
			\begin{equation*}
				\kappa^+: \beta_1^+ \rightarrow \beta_2^+.
			\end{equation*}
		\end{enumerate}
	\end{enumerate}
\end{thm}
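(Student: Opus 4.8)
The plan is to prove the whole statement by iterating the two one-step completions $\mathbf{R}(-)$ and $\mathbf{L}(-)$ while carrying along their universal properties, so that (2)(i) and (2)(ii) become a two-fold application of a single extension principle and (1)(i)--(1)(iii) fall out along the way. The first step is to record the elementary properties of $\mathbf{R}(\Ccal)$ for an additive category $\Ccal$: it always has cokernels, computed pointwise; the representable presheaves $h_c=\Hom_\Ccal(-,c)$ are projective there (by Yoneda, $\Hom_{\mathbf{R}(\Ccal)}(h_c,-)$ is evaluation at $c$, which preserves epimorphisms because epimorphisms in $\mathbf{R}(\Ccal)$ are exactly the pointwise ones) and generate every object as a cokernel of a map between them; and, for every additive $\Ecal$ with cokernels, restriction along $\Ccal\hookrightarrow\mathbf{R}(\Ccal)$ is an equivalence between cokernel-preserving additive functors $\mathbf{R}(\Ccal)\to\Ecal$ and additive functors $\Ccal\to\Ecal$, with inverse sending $\beta$ to the functor that takes a presentation $h_{c_1}\to h_{c_0}\to F\to 0$ to $\coker(\beta(c_1)\to\beta(c_0))$. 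Being an equivalence of categories, it transports natural transformations, which supplies the $2$-categorical refinement needed later. Dualising via $\mathbf{L}(\Ccal)=\mathbf{R}(\Ccal^{op})^{op}$ yields the mirror statements: $\mathbf{L}(\Ccal)$ has kernels, the objects of $\Ccal$ become injective and cogenerating in $\mathbf{L}(\Ccal)$, and additive functors $\Ccal\to\Ecal$ into a category with kernels extend uniquely to kernel-preserving functors $\mathbf{L}(\Ccal)\to\Ecal$.

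The technical heart is an upgrade of this principle when the source already has kernels. I claim that if $\Ccal$ has kernels then $\mathbf{R}(\Ccal)$ is abelian and, moreover, for every abelian $\Acal$ a \emph{left exact} functor $g\colon\Ccal\to\Acal$ extends not merely to a right exact but to an \emph{exact} functor $\bar g\colon\mathbf{R}(\Ccal)\to\Acal$. Abelianness is the verification already in \cite{Freyd} (and in \cite{NeeTri}): cokernels exist for free, the presence of (weak) kernels in $\Ccal$ produces kernels in $\mathbf{R}(\Ccal)$, and one checks that every epimorphism is a cokernel and, using the projective representables, that every monomorphism is a kernel. For exactness, the point is that every $F\in\mathbf{R}(\Ccal)$ has a projective resolution of length at most two, $0\to h_{c_2}\to h_{c_1}\to h_{c_0}\to F\to 0$, in which $c_2=\ker_\Ccal(c_1\to c_0)$ and the differentials come from morphisms of $\Ccal$; applying $g$ leaves this sequence exact in positive degrees precisely because $g$ being left exact turns $c_2=\ker_\Ccal(c_1\to c_0)$ into $g(c_2)=\ker(g(c_1)\to g(c_0))$, so $L_i\bar g=0$ for $i>0$ and $\bar g$ is exact. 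Dually, if $\Ccal$ has cokernels then $\mathbf{L}(\Ccal)$ is abelian and right exact functors out of $\Ccal$ extend to exact functors out of $\mathbf{L}(\Ccal)$.

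Now I would assemble the theorem in the following order. For (1)(i): $\mathbf{R}(\Dcal)$ has cokernels, so by the dual of the upgrade lemma $\A(\Dcal)=\mathbf{L}(\mathbf{R}(\Dcal))$ is abelian. For (2)(i): given additive $\beta\colon\Dcal\to\Acal$, extend along $\Dcal\hookrightarrow\mathbf{R}(\Dcal)$ to a cokernel-preserving $\beta_{\mathbf{R}}$ (first step), then extend $\beta_{\mathbf{R}}$ along $\mathbf{R}(\Dcal)\hookrightarrow\mathbf{L}(\mathbf{R}(\Dcal))$ to an exact functor $\beta^+$ (dual upgrade lemma, using that $\mathbf{R}(\Dcal)$ has cokernels); uniqueness holds because every object of $\A(\Dcal)$ is a kernel of a map between objects of $\mathbf{R}(\Dcal)$, each of which is a cokernel of a map of $\Dcal$, so an exact functor is pinned down by its restriction to $\Dcal$. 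Part (2)(ii) is the same two-fold extension one categorical level up, chaining the bijections on natural transformations coming from the two equivalences. For (1)(ii): by the symmetric construction $\mathbf{R}(\mathbf{L}(\Dcal))$ is abelian (upgrade lemma, since $\mathbf{L}(\Dcal)$ has kernels) and carries the same universal property with respect to additive functors into abelian categories; comparing the two via their common universal property produces a canonical equivalence $\mathbf{L}(\mathbf{R}(\Dcal))\simeq\mathbf{R}(\mathbf{L}(\Dcal))$ under $\Dcal$, which is \eqref{eq:LRRL} together with the commutativity of \eqref{dia:LRRL}. Finally, for (1)(iii): through the identification of (1)(ii) the canonical embedding factors as $\Dcal\hookrightarrow\mathbf{L}(\Dcal)\hookrightarrow\mathbf{R}(\mathbf{L}(\Dcal))$ with the second arrow the Yoneda embedding, so each object of $\Dcal$ becomes a representable presheaf on $\mathbf{L}(\Dcal)$, hence projective in $\mathbf{R}(\mathbf{L}(\Dcal))$ by the first step.

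The main obstacle is the abelianness claim inside the upgrade lemma: producing genuine kernels in $\mathbf{R}(\Ccal)$ from the weak-kernel hypothesis and verifying that every monomorphism there is a kernel is the one ingredient that is not a formal consequence of a mapping property and really uses the projective representables. Everything else is careful bookkeeping with the two one-step universal properties and their evident $2$-categorical versions.
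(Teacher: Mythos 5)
Your proposal is correct, and for the parts of the theorem that the paper actually argues -- (1)(ii), (1)(iii) -- it follows the same route (comparison of the two universal properties, and the factorization of $\Dcal \hookrightarrow \A(\Dcal)$ through the Yoneda embedding $\mathbf{L}(\Dcal) \hookrightarrow \mathbf{R}(\mathbf{L}(\Dcal))$ so that objects of $\Dcal$ become representable, hence projective). The two genuine differences are these. First, the paper treats (1)(i) and (2)(i) as black boxes cited from Freyd, whereas you open the box: you factor the construction into the free cokernel-completion $\mathbf{R}(-)$ and the free kernel-completion $\mathbf{L}(-)$, each with its one-step $2$-universal property, and supply the key ``upgrade'' that a kernel-preserving (resp.\ cokernel-preserving) functor out of a category with kernels (resp.\ cokernels) extends \emph{exactly}, via the length-two projective resolution $0 \to h_{c_2} \to h_{c_1} \to h_{c_0} \to F \to 0$ with $c_2 = \ker(c_1 \to c_0)$ and the vanishing of $L_1$. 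This is essentially Freyd's own argument (the only non-formal ingredient, as you correctly flag, being the verification that every mono in $\mathbf{R}(\Ccal)$ is a kernel when $\Ccal$ has weak kernels), so what your version buys is self-containedness rather than novelty. Second, for (2)(ii) the paper uses a slicker reduction: it applies (2)(i) verbatim to the abelian arrow category $\Acal^{\Ical}$, identifying a natural transformation with a single functor into $\Acal^{\Ical}$, whereas you chain the fully-faithfulness of restriction along each of the two one-step completions. Both arguments are standard and correct; the paper's trick reuses (2)(i) with no extra bookkeeping, while yours makes the $2$-categorical content of each completion explicit, which is arguably closer in spirit to how the rest of the paper uses these extensions.
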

\begin{proof}
	\begin{enumerate}
		\item Statement (i) is proved in \cite[Thm~4.1]{Freyd}. Once one knows that $\mathbf{L}(\mathbf{R}(\Dcal))$ satisfies the universal property in (2)(i), one deduces by symmetry that $\mathbf{R}(\mathbf{L}(\Dcal))$ satisfies the same universal property, and this gives the canonical equivalence in (ii). Via this equivalence, the fully faithful embedding $\Dcal \hookrightarrow \A(\Dcal)$ factors through the usual Yoneda embedding $\mathbf{L}(\Dcal) \hookrightarrow \A(\Dcal)$. As a consequence of the Yoneda Lemma, all objects in $\mathbf{L}(\Dcal)$ are projective in $\A(\Dcal)$, and this implies (iii).
		\item Statement (i) is proved in \cite[Thm.~4.1]{Freyd}. In order to prove (ii), let $\Ical$ denote the category with two objects $1$ and $2$ and exactly one non-identity arrow $1 \rightarrow 2$. Given an abelian category $\Acal$ as in the statement, the category $\Acal^{\Ical} := \Fun(\Ical, \Acal)$ is again abelian. Moreover, for every (additive) category $\Ccal$, giving one (additive) functor $F: \Ccal \rightarrow \Acal^{\Ical}$ is the same as giving two (additive) functors $F_1, F_2 \rightarrow \Acal$ and a natural transformation $F_1 \rightarrow F_2$; if in addition $\Ccal$ is abelian then, under this correspondence, $F$ is exact if and only if $F_1$ and $F_2$ are both exact. Using this observation, we see that (ii) follows by applying (i) to the abelian category $\Acal^{\Ical}$.
	\end{enumerate}
\end{proof}

\begin{cor}\label{cor:A(Dop)}
	For every additive category $\Dcal$, there exists a canonical equivalence
	\begin{equation}\label{eq:A(Dop)}
		\A(\Dcal^{op}) = \A(\Dcal)^{op}
	\end{equation}  
    making the diagram
	\begin{equation}\label{dia:A(D)op}
		\begin{tikzcd}
			& \Dcal^{op} \arrow{dl} \arrow{dr} \\
			\A(\Dcal^{op}) \arrow[equal,dashed]{rr} && \A(\Dcal)^{op}
		\end{tikzcd}
	\end{equation}
    commute.
\end{cor}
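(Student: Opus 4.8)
The plan is to deduce the statement formally from \Cref{thm_A(D)} together with the very definition of the functors $\mathbf{R}$ and $\mathbf{L}$, which are interchanged under passage to opposite categories. I would give the argument via the universal property, since this makes the compatibility with the canonical embeddings transparent.

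First I would observe that $\A(\Dcal)^{op}$ is again abelian and that dualising the canonical embedding $\Dcal \hookrightarrow \A(\Dcal)$ produces a fully faithful additive embedding $\Dcal^{op} \hookrightarrow \A(\Dcal)^{op}$. The claim is then that the pair $\bigl(\A(\Dcal)^{op},\ \Dcal^{op} \hookrightarrow \A(\Dcal)^{op}\bigr)$ enjoys the universal property characterising $\A(\Dcal^{op})$: every additive functor $\beta \colon \Dcal^{op} \rightarrow \Acal$ into an abelian category $\Acal$ should extend uniquely, along this embedding, to an exact functor $\A(\Dcal)^{op} \rightarrow \Acal$. To prove this, view $\beta$ as an additive functor $\beta^{op} \colon \Dcal \rightarrow \Acal^{op}$ into the abelian category $\Acal^{op}$; by \Cref{thm_A(D)}(2)(i) it extends uniquely to an exact functor $(\beta^{op})^{+} \colon \A(\Dcal) \rightarrow \Acal^{op}$, and passing back to opposites gives an exact functor $\A(\Dcal)^{op} \rightarrow \Acal$ restricting to $\beta$. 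For uniqueness, any exact extension $G \colon \A(\Dcal)^{op} \rightarrow \Acal$ of $\beta$ yields an exact extension $G^{op} \colon \A(\Dcal) \rightarrow \Acal^{op}$ of $\beta^{op}$, which must equal $(\beta^{op})^{+}$, so $G$ is determined. Since $\A(\Dcal^{op})$ is characterised up to canonical equivalence by this property (this is exactly how the equivalence in \Cref{thm_A(D)}(1)(ii) was obtained), we get the canonical equivalence \eqref{eq:A(Dop)}, and it is compatible with the two embeddings of $\Dcal^{op}$ by construction, i.e. the diagram \eqref{dia:A(D)op} commutes.

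Alternatively — and perhaps even more directly — one can unwind the definitions. For any additive category $\Ecal$ one has $\mathbf{L}(\Ecal) = \mathbf{R}(\Ecal^{op})^{op}$, so, applying this with $\Ecal = \mathbf{R}(\Dcal^{op})$ and using $\mathbf{L}(\Dcal) = \mathbf{R}(\Dcal^{op})^{op}$,
\[
	\A(\Dcal^{op}) = \mathbf{L}\bigl(\mathbf{R}(\Dcal^{op})\bigr) = \mathbf{R}\bigl(\mathbf{R}(\Dcal^{op})^{op}\bigr)^{op} = \mathbf{R}\bigl(\mathbf{L}(\Dcal)\bigr)^{op}.
\]
By the canonical identification \eqref{eq:LRRL} the right-hand side equals $\mathbf{L}(\mathbf{R}(\Dcal))^{op} = \A(\Dcal)^{op}$, compatibly with the embeddings of $\Dcal$; dualising then yields \eqref{dia:A(D)op}.

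The content here is entirely formal, so I do not expect a genuine obstacle. The only point demanding a little care is bookkeeping with the various opposite categories, and in particular checking that the canonical equivalence of \Cref{thm_A(D)}(1)(ii) — hence the equivalence produced above — really is compatible with the canonical embeddings of $\Dcal$ and $\Dcal^{op}$, so that \eqref{dia:A(D)op} commutes strictly; this is immediate from the fact that those embeddings are defined through the Yoneda embedding and therefore behave well under dualisation.
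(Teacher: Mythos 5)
Your proof is correct and matches the paper's own argument: the paper likewise gives both the universal-property characterisation and the direct unwinding $\A(\Dcal^{op}) = \mathbf{L}(\mathbf{R}(\Dcal^{op})) = \mathbf{R}(\mathbf{L}(\Dcal))^{op} = \A(\Dcal)^{op}$ via \eqref{eq:LRRL}, merely presenting the definitional computation first and the universal property as the alternative. No gaps.
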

\begin{proof}
	Using \Cref{thm_A(D)}(1)(ii), one can construct the equivalence in the statement directly as
	\begin{equation*}
		\A(\Dcal^{op}) := \mathbf{L}(\mathbf{R}(\Dcal^{op})) = \mathbf{L}(\mathbf{L}(\Dcal)^{op}) = \mathbf{R}(\mathbf{L}(\Dcal))^{op} = \A(\Dcal)^{op},
	\end{equation*}
    where the last passage is induced by the canonical equivalence \eqref{eq:LRRL}; the commutativity of the diagram \eqref{dia:A(D)op} then follows directly from that of \eqref{dia:LRRL}. Alternatively, one can observe that the abelian categories $\A(\Dcal^{op})$ and $\A(\Dcal)^{op}$ satisfy the same universal property described in \Cref{thm_A(D)}(2)(i); of course, the resulting equivalence $\A(\Dcal^{op}) = \A(\Dcal)^{op}$ coincides with the previous one.
\end{proof}

\begin{rem}
	The abelian category $\A(\Dcal)$ defined as in \eqref{A(D)} is usually called the \textit{abelian hull} of the additive category $\Dcal$. Note that this terminology is slightly misleading, in that the abelian hull $\A(\Acal)$ of an additive category $\Acal$ is usually much bigger than $\Acal$ itself! Indeed, in view of \Cref{thm_A(D)}(2)(i), the inclusion $\Acal \hookrightarrow \A(\Acal)$ is an equivalence if and only if every additive functor from $\Acal$ to an abelian category is automatically exact; by considering the functors $\Hom_{\Acal}(A,-): \Acal \rightarrow \Ab$ for $A \in \Acal$, we deduce that this is the case if and only if all objects of $\Acal$ are projective.
\end{rem}

In the next two sections we will explain how to construct functors and natural transformations on the level of universal abelian factorizations. These constructions are based on the following easy consequence of \Cref{thm_A(D)}:

\begin{cor}\label{cor:A(D)}
	Let $\Dcal_1$ and $\Dcal_2$ be two additive categories. Then:
	\begin{enumerate}
		\item Every additive functor $\phi: \Dcal_1 \rightarrow \Dcal_2$ extends uniquely to an exact functor 
		\begin{equation*}
			\phi^+: \A(\Dcal_1) \rightarrow \A(\Dcal_2).
		\end{equation*}
		\item Given two additive functors $\phi_1, \phi_2: \Dcal_1 \rightarrow \Dcal_2$, every natural transformation $\kappa: \phi_1 \rightarrow \phi_2$ extends uniquely to a natural transformation of functors $\A(\Dcal_1) \rightarrow \A(\Dcal_2)$
		\begin{equation*}
			\kappa^+: \phi_1^+ \rightarrow \phi_2^+.
		\end{equation*} 
	\end{enumerate}
\end{cor}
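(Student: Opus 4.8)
The plan is to reduce both statements directly to \Cref{thm_A(D)}(2) by composing with the canonical embeddings $\Dcal_i \hookrightarrow \A(\Dcal_i)$. For part (1), I would first observe that, since $\A(\Dcal_2)$ is abelian by \Cref{thm_A(D)}(1)(i), the composite
\begin{equation*}
	\Dcal_1 \xrightarrow{\phi} \Dcal_2 \hookrightarrow \A(\Dcal_2)
\end{equation*}
is an additive functor from $\Dcal_1$ to an abelian category. Applying \Cref{thm_A(D)}(2)(i) to this composite yields a unique exact functor $\phi^+: \A(\Dcal_1) \rightarrow \A(\Dcal_2)$ whose restriction along the embedding $\Dcal_1 \hookrightarrow \A(\Dcal_1)$ agrees with the composite above; this is precisely the asserted extension of $\phi$, and its uniqueness as an exact functor extending $\phi$ is exactly the uniqueness clause of \Cref{thm_A(D)}(2)(i).

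For part (2), I would argue in the same way on the level of natural transformations. Given $\kappa: \phi_1 \rightarrow \phi_2$, whiskering with the embedding $\Dcal_2 \hookrightarrow \A(\Dcal_2)$ produces a natural transformation between the two additive functors $\Dcal_1 \rightarrow \A(\Dcal_2)$ obtained from $\phi_1$ and $\phi_2$ as in part (1). By construction these functors extend to $\phi_1^+$ and $\phi_2^+$ respectively, so \Cref{thm_A(D)}(2)(ii) applies verbatim and delivers a unique natural transformation $\kappa^+: \phi_1^+ \rightarrow \phi_2^+$ restricting to $\kappa$ along $\Dcal_1 \hookrightarrow \A(\Dcal_1)$.

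There is essentially no obstacle here: the statement is a purely formal consequence of the universal property recorded in \Cref{thm_A(D)}(2), and the only point requiring (minimal) care is to keep track of the fixed embeddings $\Dcal_i \hookrightarrow \A(\Dcal_i)$, so that ``extends'' is read as commutativity of the evident square and ``uniquely'' as strict uniqueness of the filler. If desired, one can also note in passing that the assignment $\phi \mapsto \phi^+$ is functorial — one has $\id^+ = \id$ and $(\psi \circ \phi)^+ = \psi^+ \circ \phi^+$ under the canonical identifications, since in each case both sides are exact functors extending the same additive functor — but this is not needed for the statement as given.
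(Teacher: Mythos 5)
Your proposal is correct and follows exactly the paper's argument: part (1) applies \Cref{thm_A(D)}(2)(i) to the composite $\Dcal_1 \xrightarrow{\phi} \Dcal_2 \hookrightarrow \A(\Dcal_2)$, and part (2) applies \Cref{thm_A(D)}(2)(ii) to the whiskered transformation of functors $\Dcal_1 \rightarrow \A(\Dcal_2)$. The additional remark on functoriality of $\phi \mapsto \phi^+$ is accurate but, as you note, not required.
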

\begin{proof}
	\begin{enumerate}
		\item This follows by applying \Cref{thm_A(D)}(2)(i) to the composite functor $\Dcal_1 \xrightarrow{\phi} \Dcal_2 \hookrightarrow \A(\Dcal_2)$.
		\item This follows by applying \Cref{thm_A(D)}(2)(ii) to the natural transformation of additive functors $\Dcal_1 \rightarrow \A(\Dcal_2)$ induced by $\kappa$.
	\end{enumerate}
\end{proof}

\begin{rem}
	For every ring $R$ there exists an $R$-linear variant of Freyd's construction: given an $R$-linear additive category $\Dcal$, it provides an $R$-linear abelian category $\A_R(\Dcal)$ such that, for every $R$-linear abelian category $\Acal$, every $R$-linear additive functor $\Dcal \rightarrow \Acal$ extends uniquely to an $R$-linear exact functor $\A_R(\Dcal) \rightarrow \Acal$; see \cite[\S~1.4]{BV-P} for more details. 
	
	For our applications we are mainly interested in the case where $R = \Q$, which is particularly easy to treat. Indeed, for an additive category, being $\Q$-linear is a property and not an additional structure; moreover, for every $\Q$-linear additive category $\Ccal$, the categories $\mathbf{R}(\Ccal)$ and $\mathbf{L}(\Ccal)$ are easily seen to be $\Q$-linear as well. As a consequence, for every $\Q$-linear additive category $\Dcal$, the abelian category $\A(\Dcal)$ is $\Q$-linear; since it obviously satisfies the universal property of $\A_{\Q}(\Dcal)$, it is canonically isomorphic to it.
\end{rem}

\subsection{Universal abelian factorizations}

It will prove convenient to treat representations of additive quivers into abelian categories as being the objects of a certain "$2$-category of representations". For the purposes of the present section, it suffices to consider objects of this $2$-category; the notions of $1$-morphism and $2$-morphism are deferred until \Cref{sect:lift-exfun} and \Cref{sect:lift-nat}, respectively.

\begin{defn}\label{defn:add-repr}
	\begin{enumerate}
		\item We call \textit{(additive) representation} a triple $(\Ccal,\Ecal;R)$ consisting of two (additive) categories $\Ccal$, $\Ecal$ and one (additive) functor $R: \Ccal \rightarrow \Ecal$.
		\item We say that an additive representation $(\Ccal,\Ecal;R)$ is \textit{abelian} if the category $\Ecal$ is abelian.
	\end{enumerate}
\end{defn}

We are ready to define universal abelian factorizations of abelian representations following \cite[\S~1.4]{BV-P}:
\begin{defn}\label{defn:A(beta)}
	Let $(\Dcal,\Acal;\beta)$ be an abelian representation. We let $\A(\beta)$ denote the abelian category obtained as the Serre quotient of the abelian hull $\A(\Dcal)$ by the Serre subcategory $\ker\left\{\beta^+: \A(\Dcal) \rightarrow \Acal\right\}$, and we call it the \textit{universal abelian factorization} of $\beta$. 
\end{defn}

\begin{nota}
	Given an abelian representation $(\Dcal,\Acal;\beta)$, we will often use the following notation:
	\begin{itemize}
		\item We let $\pi$ denote the canonical quotient functor $\A(\Dcal) \rightarrow \A(\beta)$, and we also write $\pi$ for the composite additive functor $\Dcal \rightarrow \A(\Dcal) \rightarrow \A(\beta)$.
		\item We let $\iota$ denote the induced faithful exact functor $\A(\beta) \hookrightarrow \Acal$. 
	\end{itemize}
	Whenever several abelian representations are involved, we adapt the above notation in an obvious manner.
\end{nota}

The following basic consequence of \Cref{thm_A(D)}(2)(i) explains why universal abelian factorizations deserve such a name:

\begin{prop}\label{prop:A(beta)}
	Let $(\Dcal,\Acal;\beta)$ be an abelian representation, with universal abelian factorization
	\begin{equation*}
		\Dcal \xrightarrow{\pi} \A(\beta) \xrightarrow{\iota} \Acal.
	\end{equation*}
    Moreover, let $\Acal'$ be another abelian category endowed with a faithful exact functor $\Phi: \Acal' \rightarrow \Acal$, and suppose that there exists a representation $\phi: \Dcal \rightarrow \Acal'$ making the diagram
    \begin{equation*}
    	\begin{tikzcd}
    		&& \Dcal \arrow{dll}{\phi} \arrow{drr}{\beta} \\
    		\Acal' \arrow{rrrr}{\Phi} &&&& \Acal
    	\end{tikzcd}
    \end{equation*}
    commute. Then there exists a unique faithful exact functor 
    \begin{equation}\label{A(beta):univ}
    	\A(\beta) \rightarrow \Acal'
    \end{equation} 
    making the whole diagram
    \begin{equation}\label{dia:A(beta)-univ}
    	\begin{tikzcd}
    		&& \Dcal \arrow[bend right]{ddll}{\phi} \arrow[bend left]{ddrr}{\beta} \arrow{d}{\pi} \\
    		&& \A(\beta) \arrow{drr}{\iota} \arrow[dashed]{dll} \\
    		\Acal' \arrow{rrrr}{\Phi} &&&& \Acal
    	\end{tikzcd}
    \end{equation}
    commute.
\end{prop}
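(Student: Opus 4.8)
The plan is to produce the dashed functor as the concatenation of two universal factorizations — first Freyd's abelian hull, then a Serre quotient — and to read off all of its properties from the universal properties already recorded in \Cref{thm_A(D)}.

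First I would apply \Cref{thm_A(D)}(2)(i) to the additive functor $\phi \colon \Dcal \to \Acal'$ (legitimate since $\Acal'$ is abelian): this yields a unique exact functor $\phi^+ \colon \A(\Dcal) \to \Acal'$ extending $\phi$. Composing with the exact functor $\Phi$, the functor $\Phi \circ \phi^+ \colon \A(\Dcal) \to \Acal$ is exact and restricts on $\Dcal$ to $\Phi \circ \phi = \beta$; by the uniqueness clause of \Cref{thm_A(D)}(2)(i) it must equal $\beta^+$. Next I would compare the two Serre subcategories $\ker(\phi^+)$ and $\ker(\beta^+)$ of $\A(\Dcal)$. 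Since $\Phi$ is faithful, $\Phi(Z) = 0$ forces $Z = 0$ (apply faithfulness to $\id_Z$ and $0_Z$); combined with $\Phi \circ \phi^+ = \beta^+$ this gives $\ker(\phi^+) = \ker(\beta^+)$. In particular $\phi^+$ annihilates $\ker(\beta^+)$, so by the universal property of the Serre quotient $\pi \colon \A(\Dcal) \to \A(\beta) = \A(\Dcal)/\ker(\beta^+)$ it factors uniquely as an exact functor $\bar{\phi} \colon \A(\beta) \to \Acal'$ with $\bar{\phi} \circ \pi = \phi^+$.

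It then remains to check that $\bar{\phi}$ is faithful, that it makes \eqref{dia:A(beta)-univ} commute, and that it is the only such functor. Faithfulness follows from a routine kernel computation: the kernel of the functor induced on a Serre quotient by an exact functor $F$ with $\ker F = \ker(\beta^+)$ is the image of $\ker(F)$ under $\pi$, hence zero, and an exact functor between abelian categories with trivial kernel is faithful. For the lower triangle, I would note that $\Phi \circ \bar{\phi}$ and $\iota$ are both exact functors $\A(\beta) \to \Acal$ whose composite with $\pi$ equals $\beta^+$ — the first because $\Phi \circ \phi^+ = \beta^+$, the second by the very definition of $\iota$ as the factorization of $\beta^+$ through $\pi$ — so they agree by the uniqueness in the universal property of the Serre quotient; the upper triangle $\bar{\phi} \circ \pi = \phi$ on $\Dcal$ holds by construction. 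For uniqueness, if $G \colon \A(\beta) \to \Acal'$ is any exact functor with $\Phi \circ G = \iota$ and $G \circ \pi = \phi$ on $\Dcal$, then $G \circ \pi \colon \A(\Dcal) \to \Acal'$ is exact (the quotient functor being exact) and extends $\phi$, hence equals $\phi^+$ by \Cref{thm_A(D)}(2)(i), and then $G = \bar{\phi}$ by the uniqueness of the Serre-quotient factorization.

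There is no deep obstacle here: the statement is a formal concatenation of two universal properties. The only point requiring a little care is that the commutativity has to be established on the nose rather than merely up to natural isomorphism; this is ensured by the fact that the Serre quotient functor is the identity on objects, so that the factorization through it is strictly (not just essentially) unique, together with the strictness of Freyd's universal property as phrased in \Cref{thm_A(D)}.
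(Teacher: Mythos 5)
Your proposal is correct and follows essentially the same route as the paper's proof: extend $\phi$ to $\phi^+$ on the Freyd hull, observe $\Phi\circ\phi^+=\beta^+$ by uniqueness, compare kernels using faithfulness of $\Phi$, and factor through the Serre quotient. You are in fact somewhat more explicit than the paper about the equality $\ker(\phi^+)=\ker(\beta^+)$, the faithfulness of the induced functor, and the uniqueness check, all of which the paper leaves implicit in the phrase ``uniquely determined by this property.''
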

\begin{proof}
	By \Cref{thm_A(D)}(2)(i), the abelian representation $\phi: \Dcal \rightarrow \Acal'$ extends canonically to an exact functor
	\begin{equation*}
		\phi^+: \A(\Dcal) \rightarrow \Acal'.
	\end{equation*}
	By the universal property of Serre quotients, the latter factors though $\A(\beta)$ if and only if, for an object $A \in \A(\Dcal)$, the implication
	\begin{equation*}
		\phi^+(A) = 0 \implies \beta^+(A) = 0
	\end{equation*}
    holds; if this is the case, the induced exact functor \eqref{A(beta):univ} is uniquely determined by this property and makes the whole diagram \eqref{dia:A(beta)-univ} commute. But we have the chain of implications
    \begin{equation*}
    	\phi^+(A) = 0 \implies \Phi(\phi^+(A)) = 0 \iff \beta^+(A) = 0,
    \end{equation*}
    where the second passage follows from the equality $\Phi \circ \phi^+ = \beta^+$ (the latter, in turn, follows from the equality $\Phi \circ \phi = \beta$, which holds by hypothesis). This proves the claim. 
\end{proof}

\begin{cor}
	Let $(\Dcal,\Acal;\beta)$ be an abelian representation, and let $\beta^{op}: \Dcal^{op} \rightarrow \Acal^{op}$ denote the functor induced by $\beta$ on the opposite categories. Then there exists a canonical additive equivalence 
	\begin{equation*}
		\A(\beta^{op}) = \A(\beta)^{op}
	\end{equation*}
    making the diagram
    \begin{equation}\label{dia:A(beta-op)}
    	\begin{tikzcd}
    		& \Dcal^{op} \arrow{dl} \arrow{dr} \\
    		\A(\Dcal^{op}) \arrow[equal]{rr} \arrow{d}{\pi^{op}} && \A(\Dcal)^{op} \arrow{d}{\pi^{op}} \\
    		\A(\beta^{op}) \arrow{dr}{\iota^{op}} \arrow[equal,dashed]{rr} && \A(\beta)^{op} \arrow{dl}{\iota^{op}} \\
    		& \Acal^{op}
    	\end{tikzcd}
    \end{equation}
    commute.
\end{cor}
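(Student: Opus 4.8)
The plan is to unwind both sides through the definition of the universal abelian factorization as a Serre quotient and then invoke two compatibilities: that Freyd's abelian hull commutes with passage to opposite categories (\Cref{cor:A(Dop)}), and that the formation of Serre quotients is self-dual.

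First I would record that, by \Cref{defn:A(beta)}, one has $\A(\beta) = \A(\Dcal)/\ker(\beta^+)$ and likewise $\A(\beta^{op}) = \A(\Dcal^{op})/\ker\bigl((\beta^{op})^+\bigr)$. By \Cref{cor:A(Dop)} there is a canonical equivalence $\A(\Dcal^{op}) = \A(\Dcal)^{op}$ compatible with the embeddings of $\Dcal^{op}$. Under this equivalence I claim that $(\beta^{op})^+$ is identified with $(\beta^+)^{op}$: a functor between abelian categories is exact if and only if its opposite is, so $(\beta^+)^{op}\colon \A(\Dcal)^{op}\to\Acal^{op}$ is an exact functor extending $\beta^{op}$ along $\Dcal^{op}\hookrightarrow\A(\Dcal^{op})=\A(\Dcal)^{op}$, hence it coincides with $(\beta^{op})^+$ by the uniqueness statement in \Cref{thm_A(D)}(2)(i). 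Since an object is a zero object of $\Acal^{op}$ exactly when it is one of $\Acal$, it follows that $\ker\bigl((\beta^{op})^+\bigr) = \bigl(\ker\beta^+\bigr)^{op}$, the right-hand side being a Serre subcategory of $\A(\Dcal)^{op}$ because the closure conditions defining a Serre subcategory (under subobjects, quotients and extensions) are self-dual.

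Next I would invoke the self-duality of the Serre quotient construction: for any abelian category $\Bcal$ and any Serre subcategory $\Scal\subseteq\Bcal$, the opposite of the quotient functor exhibits $(\Bcal/\Scal)^{op}$ as the Serre quotient $\Bcal^{op}/\Scal^{op}$; this is immediate from the universal property of Serre quotients, phrased in terms of exact functors inverting precisely the morphisms whose kernel and cokernel lie in $\Scal$ — a condition manifestly stable under $(-)^{op}$. Chaining these identifications yields
\begin{equation*}
	\A(\beta^{op}) = \A(\Dcal^{op})/\ker\bigl((\beta^{op})^+\bigr) = \A(\Dcal)^{op}/\bigl(\ker\beta^+\bigr)^{op} = \bigl(\A(\Dcal)/\ker\beta^+\bigr)^{op} = \A(\beta)^{op},
\end{equation*}
which is the desired canonical equivalence.

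Finally, the commutativity of \eqref{dia:A(beta-op)} I would reduce to uniqueness. The top square commutes by \Cref{cor:A(Dop)}; the middle (dashed) equality is the one just constructed, and under it the two functors labelled $\pi^{op}$ correspond because both are the quotient functors of the matching Serre quotients; the bottom triangles commute because $\iota^{op}$ on either side is the unique faithful exact functor induced by the factorization of $(\beta^+)^{op}=(\beta^{op})^+$ through the Serre quotient. No step is genuinely hard; the only point requiring a little care is the self-duality of Serre quotients. Alternatively — exactly as in the second proof of \Cref{cor:A(Dop)} — one observes that both $\A(\beta^{op})$ and $\A(\beta)^{op}$ are initial among abelian categories equipped with a faithful exact functor to $\Acal^{op}$ through which $\beta^{op}$ factors (for $\A(\beta)^{op}$ this follows by applying \Cref{prop:A(beta)} to the opposite categories), and concludes by uniqueness of initial objects.
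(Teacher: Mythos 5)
Your proposal is correct and follows essentially the same route as the paper: identifying $\A(\beta^{op})$ and $\A(\beta)^{op}$ by chaining the equivalence $\A(\Dcal^{op})=\A(\Dcal)^{op}$ of \Cref{cor:A(Dop)} with the self-duality of Serre quotients, and noting the alternative argument via the universal property of \Cref{prop:A(beta)}. You simply spell out the intermediate verifications (that $(\beta^{op})^+=(\beta^+)^{op}$ and that $\ker((\beta^{op})^+)=(\ker\beta^+)^{op}$) which the paper leaves implicit.
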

\begin{proof}
	This is completely analogous to \Cref{cor:A(D)-prod}. Namely, one can construct the equivalence in the statement as
	\begin{equation*}
		\A(\beta^{op}) := \A(\Dcal^{op})/\ker(\beta^{op,+}) = \A(\Dcal)^{op}/\ker(\beta^+)^{op} =: \A(\beta)^{op}
	\end{equation*}
    where the middle equivalence is induced by \eqref{eq:A(Dop)}; the commutativity of the diagram \eqref{dia:A(beta-op)} than follows directly from that of \eqref{dia:A(D)op}. Alternatively, one can observe that the abelian categories $\A(\beta^{op})$ and $\A(\beta)^{op}$ satisfy the same universal property described in \Cref{prop:A(beta)}; of course, the resulting equivalence $\A(\beta^{op}) = \A(\beta)^{op}$ coincides with the previous one.
\end{proof}

\begin{rem}
	The result of \Cref{prop:A(beta)} seems to suggest a possible approach to recover $\A(\beta)$ directly as a subcategory of $\Acal$. Namely, consider the collection $\mathfrak{A}$ of all those additive subcategories $\Acal'$ of $\Acal$ which satisfy the following properties:
	\begin{enumerate}
		\item[(i)] $\Acal'$ contains $\beta(\Dcal)$;
		\item[(ii)] $\Acal'$ is abelian;
		\item[(iii)] the inclusion functor $\Acal' \subset \Acal$ is exact.
	\end{enumerate}
	The collection $\mathfrak{A}$ is non-empty, since it contains $\Acal$ itself. If one could order $\mathfrak{A}$ by inclusion, then \Cref{prop:A(beta)} would roughly state that the subcategory $\iota(\A(\beta))$ of $\Acal$ is the minimal object of $\mathfrak{A}$; one would thus be tempted to define $\A(\beta)$ to be the intersection of all members of $\mathfrak{A}$. One issue with this approach is that, since the subcategories $\Acal'$ of $\Acal$ above are not assumed to be full, it becomes somewhat problematic to check the stability of properties (ii) and (iii) above under intersection: for example, the kernel of the same morphism could be represented by distinct objects of $\Acal$ in two different members of $\mathfrak{A}$. This issue cannot be removed by imposing that each member of $\mathfrak{A}$ be stable under isomorphisms of its objects in $\Acal$, since this would force one to automatically include all automorphisms of each object in the image of $\beta$.
\end{rem}

Before moving further, as a quick sanity check, let us note that the construction of universal abelian factorizations is idempotent in a precise sense:

\begin{lem}\label{lem:UAF-idem}
	Let $(\Dcal,\Acal;\beta)$ be an abelian representation, with associated factorization
	\begin{equation*}
		\beta: \Dcal \xrightarrow{\pi} \A(\beta) \xrightarrow{\iota} \Acal.
	\end{equation*}
	Regard the triple $(\Dcal,\A(\beta);\pi)$ as another abelian representation, and let
	\begin{equation*}
		\pi: \Dcal \xrightarrow{\rho} \A(\pi) \xrightarrow{\nu} \A(\beta)
	\end{equation*}
	denote the corresponding factorization. Then the canonical faithful exact functor $\nu: \A(\pi) \rightarrow \A(\beta)$ is in fact an equivalence.
\end{lem}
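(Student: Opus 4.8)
The plan is to check that the second application of the construction quotients $\A(\Dcal)$ by the very same Serre subcategory as the first one, so that $\A(\pi)$ gets canonically identified with $\A(\beta)$ and $\nu$ with the identity. Write $Q\colon \A(\Dcal)\to\A(\beta)$ for the Serre quotient functor, i.e.\ for the functor denoted $\pi$ in the Notation following \Cref{defn:A(beta)}; by construction $\A(\beta)=\A(\Dcal)/\ker(\beta^+)$, and the standard description of the kernel of a Serre quotient functor gives $\ker(Q)=\ker(\beta^+)$.

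The first step is to identify the exact extension $\pi^+\colon\A(\Dcal)\to\A(\beta)$ of the additive functor $\pi\colon\Dcal\to\A(\beta)$ entering the definition $\A(\pi)=\A(\Dcal)/\ker(\pi^+)$. Since $\pi$ is by definition the composite $\Dcal\hookrightarrow\A(\Dcal)\xrightarrow{Q}\A(\beta)$, the exact functor $Q$ restricts along $\Dcal\hookrightarrow\A(\Dcal)$ to $\pi$; by the uniqueness clause of \Cref{thm_A(D)}(2)(i) (equivalently \Cref{cor:A(D)}(1)) applied to $\pi\colon\Dcal\to\A(\beta)$, this forces $\pi^+=Q$. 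Hence $\ker(\pi^+)=\ker(Q)=\ker(\beta^+)$, and therefore
\[
	\A(\pi)=\A(\Dcal)/\ker(\pi^+)=\A(\Dcal)/\ker(\beta^+)=\A(\beta).
\]
Under this identification $\rho\colon\Dcal\to\A(\pi)$ becomes the composite $\Dcal\hookrightarrow\A(\Dcal)\xrightarrow{Q}\A(\beta)$, that is $\pi$, while $\nu\colon\A(\pi)\to\A(\beta)$ becomes the functor induced on $\A(\Dcal)/\ker(Q)$ by $Q=\pi^+$ --- i.e.\ the canonical equivalence $\A(\Dcal)/\ker(\beta^+)\xrightarrow{\sim}\A(\beta)$. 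In particular $\nu$ is an equivalence, which proves the lemma.

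I do not anticipate a real obstacle here: the argument is essentially bookkeeping, the two substantive points being the identity $\pi^+=Q$ (pure uniqueness of exact extensions) and the compatibility of the equivalence $\A(\pi)\simeq\A(\beta)$ with the structure functors $\rho$, $\nu$, $\iota$. Should one prefer a proof free of explicit identifications, one can instead observe that $(\Dcal,\A(\beta);\pi)$ equipped with the faithful exact functor $\id_{\A(\beta)}$ is tautologically a faithful exact factorization of $\pi$, verify directly that it enjoys the universal property singled out in \Cref{prop:A(beta)} for $\A(\pi)$, and read off that the resulting comparison functor is $\nu$.
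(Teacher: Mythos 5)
Your proof is correct and takes essentially the same route as the paper: both arguments reduce the lemma to the identification of the two Serre subcategories $\ker(\pi^+)=\ker(\beta^+)$ of $\A(\Dcal)$, so that the two quotients coincide. The only cosmetic difference is that you establish this via $\pi^+=Q$ and the standard description of the kernel of a Serre quotient functor, whereas the paper routes one implication through the conservativity of $\nu$.
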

\begin{proof}
	It suffices to show that the two quotient functors
	\begin{equation*}
		\pi: \A(\Dcal) \rightarrow \A(\beta), \qquad \rho: \A(\Dcal) \rightarrow \A(\pi)
	\end{equation*}
	have the same kernel. But, for every object $A \in \A(\Dcal)$, we have the implications
	\begin{equation*}
		\pi(A) = 0 \iff \nu(\rho(A)) = 0 \iff \rho(A) = 0,
	\end{equation*}
	where the second passage follows from the conservativity of $\nu$. This proves the claim.
\end{proof}

\section{Lifting exact functors}\label{sect:lift-exfun}

In this section we explain how to define exact functors between universal abelian factorizations. This is essentially an adaptation of the results of \cite{IvPerv} to the setting of \cite{BV-P}; similar results are discussed in \cite[\S~1]{IM19}. We firstly prove the analogue of \cite[Prop.~6.3]{IvPerv} in a quite rigid form and then explain how to make it more flexible; the latter form turns out to be a useful recognition principle in concrete applications.

\subsection{Main result}

We start by proving the general lifting result for exact functors. In order to formulate it in a compact form, it is convenient to introduce the following notion of $1$-morphism of (additive) representations:

\begin{defn}\label{defn:1-mor_repr}
	For $i = 1,2$, let $(\Ccal_i,\Ecal_i;R_i)$ be a representation in the sense of \Cref{defn:add-repr}. 
	\begin{enumerate}
		\item We call \textit{$1$-morphism of representations} from $(\Ccal_1,\Ecal_1;R_1)$ to $(\Ccal_2,\Ecal_2;R_2)$ a triple $\alpha = (F,G;\kappa)$ consisting of two (additive) functors
		\begin{equation*}
			F: \Ccal_1 \rightarrow \Ccal_2, \qquad G: \Ecal_1 \rightarrow \Ecal_2
		\end{equation*}  
		and a natural isomorphism of functors $\Ccal_1 \rightarrow \Ecal_2$
		\begin{equation*}
			\kappa: G \circ R_1 \xrightarrow{\sim} R_2 \circ F.
		\end{equation*}  
		We write it as $\alpha: (\Ccal_1,\Ecal_1;R_1) \rightarrow (\Ccal_2,\Ecal_2;R_2)$ and we often depict it as a diagram of the form
		\begin{equation*}
			\begin{tikzcd}
				\Ccal_1 \arrow{rr}{F} \arrow{d}{R_1} && \Ccal_2 \arrow{d}{R_2} \\
				\Ecal_1 \arrow{rr}{G} && \Ecal_2
			\end{tikzcd}
		\end{equation*}
		where, for notational simplicity, we do not write the natural isomorphism $\kappa$ explicitly.
		\item In case the representations $(\Ccal_1,\Ecal_1;R_1)$ and $(\Ccal_2,\Ecal_2;R_2)$ are abelian, we say that a $1$-morphism $\alpha = (F,G;\kappa): (\Ccal_1,\Ecal_1;R_1) \rightarrow (\Ccal_2,\Ecal_2;R_2)$ is \textit{exact} if the additive functor $G: \Ecal_1 \rightarrow \Ecal_2$ is exact.
	\end{enumerate}
\end{defn}

\begin{rem}
	Let us stress that, in particular, given two (additive) categories $\Ccal$ and $\Ecal$ and two (additive) functors $R_1, R_2: \Ccal \rightarrow \Ecal$, $1$-morphisms of representations of the form $(\id_{\Ccal},\id_{\Ecal};\kappa): (\Ccal,\Ecal;R_1) \rightarrow (\Ccal,\Ecal;R_2)$ correspond to natural isomorphisms $\kappa: R_1 \xrightarrow{\sim} R_2$ rather than just to natural transformations.
\end{rem}

\begin{rem}\label{nota:comp-1mor}
	There exists a natural notion of composition for $1$-morphisms of representations: given two $1$-morphisms of representations $\alpha_1 = (F_1,G_1;\kappa_1): (\Ccal_1,\Ecal_1;R_1) \rightarrow (\Ccal_2,\Ecal_2;R_2)$ and $\alpha_2 = (F_2,G_2;\kappa_2): (\Ccal_2,\Ecal_2;R_2) \rightarrow (\Ccal_3,\Ecal_3;R_3)$, we define the \textit{composite $1$-morphism} of representations as the triple
	\begin{equation*}
		(F_2 \circ F_1,G_2 \circ G_1; \kappa_2 \star \kappa_1): (\Ccal_1,\Ecal_1;R_1) \rightarrow (\Ccal_2,\Ecal_1;R_2),
	\end{equation*}
	where $\kappa_2 \star \kappa_1$ denotes the natural isomorphism of functors $\Ccal_1 \rightarrow \Ecal_3$ defined by the formula
	\begin{equation*}
		(\kappa_2 \star \kappa_1)(C_1) := \kappa_2(F_1(C_1)) \circ G_2(\kappa_1(C_1)): G_2 G_1 R_1(C_1) \xrightarrow{\sim} R_3 F_2 F_1(C_1).
	\end{equation*}
	We write it as $\alpha_2 \circ \alpha_1$, and we often depict it as a diagram of the form
	\begin{equation*}
		\begin{tikzcd}
			\Ccal_1 \arrow{rr}{F_1} \arrow{d}{R_1} && \Ccal_2 \arrow{rr}{F_2} \arrow{d}{R_2} && \Ccal_3 \arrow{d}{R_3}  \\
			\Ecal_1 \arrow{rr}{G_1} && \Ecal_2 \arrow{rr}{G_2} && \Ecal_3
		\end{tikzcd}
	\end{equation*}
    where, for notational simplicity, we do not write the natural isomorphisms $\kappa_1$ and $\kappa_2$ explicitly.
\end{rem}

\begin{rem}
	Clearly, the composite of two exact $1$-morphisms of representations is exact as well.
\end{rem}

We can now state our first main lifting result (analogous to \cite[Prop.~6.3]{IvPerv}) as follows:

\begin{prop}\label{prop_lift-exfunct}
	For $i = 1,2$ let $(\Dcal_i,\Acal_i;\beta_i)$ be an abelian representation. Suppose that we are given an exact morphism of abelian representations $\alpha = (\phi,\Phi;\kappa): (\Dcal_1,\Acal_1;\beta_1) \rightarrow (\Dcal_2,\Acal_2;\beta_2)$. Then:
	\begin{enumerate}
		\item There exists a unique exact functor $\overline{\phi}: \A(\beta_1) \rightarrow \A(\beta_2)$ satisfying the equality of additive functors $\Dcal_1 \rightarrow \A(\beta_2)$
		\begin{equation}\label{equal:pi2-phi=barphi-pi1}
			\pi_2 \circ \phi = \overline{\phi} \circ \pi_1.
		\end{equation}
		We let $\hat{\alpha}$ denote the exact morphism of abelian representations
		\begin{equation*}
			(\phi,\overline{\phi}; \id): (\Dcal_1,\A(\beta_1);\pi_1) \rightarrow (\Dcal_2,\A(\beta_2);\pi_2).
		\end{equation*} 
		\item There exists a unique natural isomorphism of functors $\A(\beta_1) \rightarrow \Acal_2$
		\begin{equation}\label{tilde_theta}
			\tilde{\kappa}: \Phi \circ \iota_1 \xrightarrow{\sim} \iota_2 \circ \overline{\phi}
		\end{equation}  
		making the diagram of functors $\Dcal_1 \rightarrow \Acal_2$
		\begin{equation}\label{dia-D1A2}
			\begin{tikzcd}
				\Phi \circ \beta_1 \arrow[equal]{d} \arrow{rr}{\kappa} && \beta_2 \circ \phi \arrow[equal]{d} \\
				\Phi \circ \iota_1 \circ \pi_1 \arrow[dashed]{r}{\tilde{\kappa}} & \iota_2 \circ \overline{\phi} \circ \pi_1 \arrow[equal]{r} & \iota_2 \circ \pi_2 \circ \phi
			\end{tikzcd}
		\end{equation}
		commute. We let $\tilde{\alpha}$ denote the exact morphism of abelian representations 
		\begin{equation*}
			(\overline{\phi},\Phi;\tilde{\kappa}): (\A(\beta_1),\Acal_1;\iota_1) \rightarrow (\A(\beta_2),\Acal_2;\iota_2).
		\end{equation*}
	\end{enumerate}
\end{prop}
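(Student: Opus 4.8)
The plan is to deduce everything from the universal properties packaged in \Cref{thm_A(D)} and \Cref{prop:A(beta)}, reducing the work on universal abelian factorizations to a computation on Freyd's abelian hulls followed by a Serre-quotient argument.

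\medskip

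\textbf{Step 1: lift to the abelian hulls.} First I would apply \Cref{cor:A(D)} to the additive functor $\phi: \Dcal_1 \rightarrow \Dcal_2$ to obtain a canonical exact functor $\phi^+: \A(\Dcal_1) \rightarrow \A(\Dcal_2)$ extending $\phi$. Composing with the quotient functor $\pi_2: \A(\Dcal_2) \rightarrow \A(\beta_2)$ gives an exact functor $\pi_2 \circ \phi^+: \A(\Dcal_1) \rightarrow \A(\beta_2)$. To produce $\overline{\phi}$ I want this composite to factor through the quotient $\pi_1: \A(\Dcal_1) \rightarrow \A(\beta_1)$; by the universal property of the Serre quotient, it suffices to check that for every $A \in \A(\Dcal_1)$ the implication $\beta_1^+(A) = 0 \implies \beta_2^+(\phi^+(A)) = 0$ holds. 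This is where the natural isomorphism $\kappa: \Phi \circ \beta_1 \xrightarrow{\sim} \beta_2 \circ \phi$ enters: passing to abelian hulls (using the uniqueness clause in \Cref{thm_A(D)}(2)(i)) we get a natural isomorphism $\Phi \circ \beta_1^+ \xrightarrow{\sim} \beta_2^+ \circ \phi^+$ of exact functors $\A(\Dcal_1) \rightarrow \Acal_2$, so $\beta_1^+(A) = 0$ forces $\beta_2^+(\phi^+(A)) \cong \Phi(\beta_1^+(A)) = 0$. Hence $\overline{\phi}$ exists; it is exact because $\pi_2 \circ \phi^+$ is exact and $\pi_1$ is a Serre quotient functor (exactness descends), and it is unique among exact functors satisfying \eqref{equal:pi2-phi=barphi-pi1} because $\pi_1$ is essentially surjective and any two exact functors agreeing after precomposition with the quotient functor coincide. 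This also yields the exact $1$-morphism $\hat{\alpha}$, whose coherence isomorphism is the identity precisely because of \eqref{equal:pi2-phi=barphi-pi1}.

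\medskip

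\textbf{Step 2: construct $\tilde{\kappa}$.} For part (2), I would again work first on the abelian hulls: unwinding the definitions, both $\Phi \circ \iota_1 \circ \pi_1$ and $\iota_2 \circ \overline{\phi} \circ \pi_1 = \iota_2 \circ \pi_2 \circ \phi^+$ are canonically identified, as functors $\A(\Dcal_1) \rightarrow \Acal_2$, with $\Phi \circ \beta_1^+$ and $\beta_2^+ \circ \phi^+$ respectively (since $\iota_j \circ \pi_j = \beta_j^+$ on $\A(\Dcal_j)$). The natural isomorphism of Step 1, $\Phi \circ \beta_1^+ \xrightarrow{\sim} \beta_2^+ \circ \phi^+$, therefore gives a natural isomorphism $\Phi \circ \iota_1 \circ \pi_1 \xrightarrow{\sim} \iota_2 \circ \overline{\phi} \circ \pi_1$ which, by construction, restricts to $\kappa$ along $\pi_1$ — that is, it makes \eqref{dia-D1A2} commute. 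To descend this to a natural isomorphism $\tilde{\kappa}: \Phi \circ \iota_1 \xrightarrow{\sim} \iota_2 \circ \overline{\phi}$ of functors $\A(\beta_1) \rightarrow \Acal_2$, I would invoke the fact that $\pi_1: \A(\Dcal_1) \rightarrow \A(\beta_1)$ is essentially surjective and full (it is a Serre quotient functor), so that a natural transformation between functors on $\A(\beta_1)$ is the same datum as a natural transformation between their pullbacks to $\A(\Dcal_1)$; concretely, for $X \in \A(\beta_1)$ pick $A \in \A(\Dcal_1)$ with $\pi_1(A) \cong X$ and transport, checking independence of the choice. Uniqueness of $\tilde{\kappa}$ subject to \eqref{dia-D1A2} is immediate from this same essential surjectivity. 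Finally, $\tilde{\alpha} = (\overline{\phi},\Phi;\tilde{\kappa})$ is an exact $1$-morphism of abelian representations because $\Phi$ is exact by hypothesis.

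\medskip

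\textbf{Main obstacle.} The genuinely delicate point is the descent in Step 2: one must verify that the well-defined natural isomorphism on $\A(\Dcal_1)$ really does factor through the quotient $\pi_1$ as a natural transformation of functors on $\A(\beta_1)$, i.e. that it is ``constant on fibres'' of $\pi_1$ in the appropriate sense, and that the resulting $\tilde{\kappa}$ is natural. This requires a careful bookkeeping argument with the Serre quotient functor — using that $\pi_1$ is full, essentially surjective, and that $\iota_1$ is faithful so that $\iota_1 \circ \pi_1 = \beta_1^+$ detects enough morphisms — rather than any deep new idea. Everything else is a formal chase through the universal properties already established; in particular the two halves of the proposition are linked by the single natural isomorphism $\Phi \circ \beta_1^+ \xrightarrow{\sim} \beta_2^+ \circ \phi^+$ obtained by extending $\kappa$ to the abelian hull, so one should set that up once at the start and reuse it.
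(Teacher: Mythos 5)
Your proposal follows the paper's proof almost step for step: part (1) is obtained by extending $\phi$ and $\kappa$ to the abelian hulls via \Cref{cor:A(D)} and then invoking the universal property of the Serre quotient, with the kernel condition checked through exactly the chain of implications you give; part (2) is obtained by descending the extended isomorphism $\Phi \circ \beta_1^+ \xrightarrow{\sim} \beta_2^+ \circ \phi^+$ along the quotient functor. The one point that needs correcting is your justification of the descent in Step 2: a Serre quotient functor is \emph{not} full in general, so the assertion that ``a natural transformation between functors on $\A(\beta_1)$ is the same datum as a natural transformation between their pullbacks to $\A(\Dcal_1)$'' cannot be deduced from fullness. (For instance, the quotient of finitely presented abelian groups by the torsion subcategory is the category of finite-dimensional $\Q$-vector spaces, and $\Hom(\Z,\Z)=\Z$ does not surject onto $\Hom_{\mathrm{quot}}(\Z,\Z)=\Q$.)

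The descent is nevertheless valid, for the reason the paper actually uses: the quotient functor $\A(\Dcal_1)\rightarrow\A(\beta_1)$ is a \emph{bijection on objects}, which already pins down the value of $\tilde{\kappa}$ on every object of $\A(\beta_1)$; naturality with respect to the new morphisms of the quotient (roofs whose wrong-way legs become invertible) then follows because $\Phi\circ\iota_1\circ\pi_1$ and $\iota_2\circ\overline{\phi}\circ\pi_1$ are exact and annihilate $\ker\beta_1^+$, hence invert precisely those legs, so the naturality square for a roof reduces to naturality squares already verified in $\A(\Dcal_1)$. Equivalently, this is the $2$-categorical universal property of the Serre quotient: precomposition with the quotient functor is fully faithful on exact functors killing the Serre subcategory. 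With that substitution your argument is complete and coincides with the paper's.
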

\begin{proof}
	Since the abelian category $\A(\beta_1)$ is generated under (co)kernels by the image of $\Dcal_1$, an exact functor $\overline{\phi}$ as in (1) is uniquely determined by the equality \eqref{equal:pi2-phi=barphi-pi1}, provided it exists. For the same reason, a natural isomorphism $\tilde{\kappa}$ as in (2) is uniquely determined by the commutativity of the diagram \eqref{dia-D1A2}, provided it exists. In order to show the existence of $\overline{\phi}$ and $\tilde{\kappa}$, we argue as follows:
	\begin{enumerate}
		\item Applying \Cref{cor:A(D)}, we obtain a canonical exact functor 
		\begin{equation*}
			\phi^+: \A(\Dcal_1) \rightarrow \A(\Dcal_2).
		\end{equation*}
		and a canonical natural isomorphism of functors $\A(\Dcal_1) \rightarrow \Acal_2$
		\begin{equation*}
			\kappa^+: \Phi \circ \beta_1 \xrightarrow{\sim} \beta_2 \circ \phi^+.
		\end{equation*}
		By the universal property of Serre quotients, a functor $\overline{\phi}$ as in the statement exists if and only if, for an object $A \in \A(\Dcal_1)$, the implication
		\begin{equation*}
			\pi_1(A) = 0 \implies \pi_2(\phi^+(A)) = 0
		\end{equation*}
		holds. But we have the chain of implications
		\begin{equation*}
			\begin{aligned}
				\pi_1(A) = 0 \implies \iota_1(\pi_1(A)) = 0 \implies & \Phi(\iota_1(\pi_1(A))) = 0 \\
				\iff & \iota_2(\pi_2(\phi^+(A))) = 0 \implies \pi_2(\phi^+(A)) = 0,
			\end{aligned}
		\end{equation*}
		where the third passage follows from the fact that the objects $\Phi(\iota_1(\pi_1(A))) = \Phi(\beta_1^+(A))$ and $\iota_2(\pi_2(\phi(A))) = \beta_2^+(\phi(A))$ are isomorphic via $\kappa^+(A)$, while the last passage follows from the conservativity of $\iota_2$.
		\item As a consequence of the uniqueness property in \Cref{thm_A(D)}(2)(ii), we see that a natural isomorphism as in \eqref{tilde_theta} makes the diagram \eqref{dia-D1A2} commute if and only if makes the induced diagram of exact functors $\A(\Dcal_1) \rightarrow \Acal_2$
		\begin{equation*}
			\begin{tikzcd}
				\Phi \circ \beta_1^+ \arrow{rr}{\kappa} \arrow[equal]{d} && \beta_2^+ \circ \phi^+ \arrow[equal]{d} \\
				\Phi \circ \iota_1 \circ \pi_1 \arrow{r}{\tilde{\kappa}} & \iota_2 \circ \overline{\phi} \circ \pi_1 \arrow[equal]{r} & \iota_2 \circ \pi_2 \circ \phi^+ 
			\end{tikzcd}
		\end{equation*}
		commute. Since the quotient functor $\pi_1: \A(\Dcal_1) \rightarrow \A(\beta_1)$ is a bijection on objects, we deduce that the only possible definition of \eqref{tilde_theta} is precisely via the latter diagram.
	\end{enumerate}
\end{proof}

\begin{rem}
	Clearly, the result of \Cref{prop:A(beta)} is just a particular case of \Cref{prop_lift-exfunct}.
\end{rem}

\begin{rem}\label{rem_comp-lift}
	\begin{enumerate}
		\item Note that, while the existence of $\overline{\phi}$ follows from the existence of the exact functor $\Phi$ (and of the natural isomorphism $\kappa$), the actual expression of $\overline{\phi}$ only depends on $\phi$. 
		\item As a consequence of the uniqueness in the two statements of \Cref{prop_lift-exfunct}, we see that the assignments $\alpha \mapsto \hat{\alpha}$ and $\alpha \mapsto \tilde{\alpha}$ are both compatible with composition of $1$-morphisms of representations (as defined in \Cref{nota:comp-1mor}) in the following sense: Given two exact $1$-morphisms of abelian representations $\alpha_1 = (\phi_1,\Phi_1;\kappa_1): (\Dcal_1,\Acal_1;\beta_1) \rightarrow (\Dcal_2,\Acal_2;\kappa_2)$ and $\alpha_2 = (\phi_2,\Phi_2;\kappa_2): (\Dcal_2,\Acal_2;\beta_2) \rightarrow (\Dcal_3,\Acal_3;\kappa_3)$, we have the equalities 
		\begin{equation*}
			\widehat{(\alpha_2 \circ \alpha_1)} = \hat{\alpha}_2 \circ \hat{\alpha}_1, \quad \qquad \widetilde{(\alpha_2 \circ \alpha_1)} = \tilde{\alpha}_2 \circ \tilde{\alpha}_1.
		\end{equation*} 
		Explicitly, this means that we have the equality of exact functors $\A(\beta_1) \rightarrow \A(\beta_3)$
		\begin{equation*}
			\overline{\phi_2 \circ \phi_1} = \overline{\phi_2} \circ \overline{\phi_1}
		\end{equation*}  
		and the equality of natural isomorphisms of functors $\A(\beta_1) \rightarrow \Acal_3$
		\begin{equation*}
			\widetilde{\kappa_2 \star \kappa_1} = \tilde{\kappa}_2 \star \tilde{\kappa}_1.
		\end{equation*}
	\end{enumerate}
\end{rem}

Before moving further, let us make an easy observation about conservativity on universal abelian factorizations: 

\begin{lem}\label{rem_conserv-lift}
	Let $(\Dcal,\Acal;\beta)$ be an abelian representation, and let $I$ be a (possibly infinite) index set. Suppose that we are given, for every $i \in I$, an abelian representation $(\Dcal_i,\Acal_i;\beta_i)$ and an exact $1$-morphism $\alpha_i = (\phi_i,\Phi_i;\kappa_i): (\Dcal,\Acal;\beta) \rightarrow (\Dcal_i,\Acal_i;\beta_i)$.
	Then the family of exact functors
	\begin{equation}\label{fam-cons1}
		\left\{\overline{\phi}_i: \A(\beta) \rightarrow \A(\beta_i)\right\}_{i \in I}
	\end{equation}
	obtained via \Cref{prop_lift-exfunct}(1) is conservative as soon as the family of exact functors
	\begin{equation}\label{fam-cons2}
		\left\{\Phi_i: \Acal \rightarrow \Acal_i \right\}
	\end{equation}  
	is conservative.
\end{lem}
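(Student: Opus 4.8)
The plan is to reduce everything to the conservativity of the faithful exact functor $\iota \colon \A(\beta) \to \Acal$. First I would record the elementary observation that, for additive (in particular exact) functors between abelian categories, being conservative is equivalent to reflecting zero objects: if $F$ is conservative and $F(Z) = 0$, then the canonical morphism $Z \to 0$ is sent by $F$ to an isomorphism $0 \to 0$, hence is itself an isomorphism, so $Z = 0$; conversely, a functor reflecting zero objects is conservative, since $F(f)$ invertible forces $F(\ker f) = \ker F(f) = 0$ and $F(\coker f) = \coker F(f) = 0$ by exactness, whence $\ker f = \coker f = 0$ and $f$ is an isomorphism. Moreover, a faithful functor always reflects zero objects: if $\iota(Z) = 0$, then $\iota(\id_Z) = \id_{\iota(Z)} = 0 = \iota(0_Z)$, so $\id_Z = 0_Z$ and $Z = 0$. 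In particular $\iota$, being faithful and exact, reflects zero objects.

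Next I would take an object $X \in \A(\beta)$ lying in the kernel of every $\overline{\phi}_i$, i.e.\ with $\overline{\phi}_i(X) = 0$ for all $i \in I$, and show that $X = 0$; by the previous paragraph this is exactly what is needed for the family \eqref{fam-cons1} to be conservative. Applying the exact functor $\iota_i$ gives $\iota_i(\overline{\phi}_i(X)) = 0$. Now I invoke the natural isomorphism $\tilde{\kappa}_i \colon \Phi_i \circ \iota \xrightarrow{\sim} \iota_i \circ \overline{\phi}_i$ provided by \Cref{prop_lift-exfunct}(2) applied to the exact $1$-morphism $\alpha_i$: evaluating it at $X$ yields an isomorphism $\Phi_i(\iota(X)) \cong \iota_i(\overline{\phi}_i(X)) = 0$, so $\Phi_i(\iota(X)) = 0$ for every $i \in I$.

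Finally, since the family $\{\Phi_i\}$ of \eqref{fam-cons2} is conservative by hypothesis, it reflects zero objects, so the vanishing $\Phi_i(\iota(X)) = 0$ for all $i$ forces $\iota(X) = 0$; conservativity of $\iota$ then gives $X = 0$, as required. I do not expect any genuine obstacle here: the content is entirely formal, the only points deserving a word of justification being the interchangeability of ``conservative'' and ``reflects zero objects'' for exact functors, and the correct identification of which lifted natural isomorphism of \Cref{prop_lift-exfunct}(2) to use. One could alternatively argue directly in terms of reflecting isomorphisms of morphisms, but tracking zero objects through $\iota$, the $\tilde{\kappa}_i$, and the $\Phi_i$ is the shortest route.
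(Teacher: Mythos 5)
Your proof is correct and follows essentially the same route as the paper: both arguments reduce conservativity of the family $\{\overline{\phi}_i\}$ to that of $\{\iota_i\circ\overline{\phi}_i\}\cong\{\Phi_i\circ\iota\}$ via the natural isomorphisms $\tilde{\kappa}_i$ of \Cref{prop_lift-exfunct}(2), and then conclude using the faithfulness of $\iota$ and $\iota_i$ together with the hypothesis on $\{\Phi_i\}$. The only (harmless) difference is that you phrase everything in terms of reflecting zero objects, whereas the paper chains the equivalence ``conservative $\iff$ faithful'' for exact functors directly at the level of the composite functors.
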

\begin{proof}
	Recall that an exact functor between abelian categories is conservative if and only if it is faithful. In our setting, for each $i \in I$ the functor $\iota_i: \A(\beta_i) \hookrightarrow \Acal_i$ is faithful and exact be construction, and thus conservative. Therefore the family \eqref{fam-cons1} is conservative if and only if the family
	\begin{equation*}
		\left\{\iota_i \circ \overline{\phi}_i: \A(\beta) \rightarrow \Acal_i\right\}_{i \in I}
	\end{equation*}
	is conservative. Since the functors $\iota_i \circ \overline{\phi}_i$ and $\Phi_i \circ \iota$ are naturally isomorphic for each $i \in I$, this holds if and only if the family
	\begin{equation*}
		\left\{\Phi_i \circ \iota: \A(\beta) \rightarrow \Acal_i \right\}_{i \in I}
	\end{equation*}
	is conservative. Since $\iota: \A(\beta) \hookrightarrow \Acal$ is conservative, this is the case as soon as the family \eqref{fam-cons2} is conservative.
\end{proof}

\subsection{Recognition principle}

It is convenient to introduce a more flexible variant of \Cref{prop_lift-exfunct} in which the strict equality \eqref{equal:pi2-phi=barphi-pi1} is allowed to be a more general natural isomorphism. This result turns out to be extremely useful in concrete applications, where it serves as a recognition principle for exact functors between universal abelian factorizations.

\begin{cor}\label{cor:lifting-flexible}
	For $i = 1,2$ let $(\Dcal_i,\Acal_i;\beta_i)$ be an abelian representation. Suppose that we are given an exact $1$-morphism of abelian representations
	\begin{equation*}
		\alpha: (\phi,\Phi;\kappa): (\Dcal_1,\Acal_1;\beta_1) \rightarrow (\Dcal_2,\Acal_2;\beta_2).
	\end{equation*}
	Moreover, suppose that we are given
	\begin{itemize}
		\item an exact functor $\psi: \A(\beta_1) \rightarrow \A(\beta_2)$,
		\item a natural isomorphism of functors $\Dcal_1 \rightarrow \A(\beta_2)$
		\begin{equation*}
			\chi: \psi \circ \pi_1 \xrightarrow{\sim} \pi_2 \circ \phi,
		\end{equation*}
		\item a natural isomorphism of functors $\A(\beta_1) \rightarrow \Acal_2$
		\begin{equation*}
			\xi: \Phi \circ \iota_1 \xrightarrow{\sim} \iota_2 \circ \psi
		\end{equation*}
	\end{itemize}
	satisfying the equality of $1$-morphisms of representations $(\Dcal_1,\Dcal_2;\phi) \rightarrow (\Acal_1,\Acal_2;\Phi)$
	\begin{equation*}
		(\iota_1,\iota_2;\xi) \circ (\pi_1,\pi_2;\chi) = (\beta_1,\beta_2;\kappa).
	\end{equation*}
	Then there exists a unique natural isomorphism of functors $\A(\beta_1) \rightarrow \A(\beta_2)$
	\begin{equation*}
		\tilde{\chi}: \psi \xrightarrow{\sim} \overline{\phi}
	\end{equation*}
	making the diagram of functors $\A(\beta_1) \rightarrow \Acal_2$
	\begin{equation}\label{dia:tilde-chi}
		\begin{tikzcd}
			\iota_2 \circ \psi \arrow[dashed]{rr}{\tilde{\chi}} \arrow{dr}{\xi} && \iota_2 \circ \overline{\phi} \arrow{dl}{\tilde{\kappa}} \\
			& \Phi \circ \iota_1
		\end{tikzcd}
	\end{equation}
	commute.
\end{cor}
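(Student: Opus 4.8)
The plan is to reduce everything to Freyd's abelian hull $\A(\Dcal_1)$, where \Cref{thm_A(D)} applies verbatim, and then to descend along the Serre quotient functor $\pi_1: \A(\Dcal_1) \rightarrow \A(\beta_1)$; here $\overline{\phi}$ and $\tilde{\kappa}$ denote the lift of $\alpha$ produced by \Cref{prop_lift-exfunct}. Uniqueness of $\tilde{\chi}$ I would dispose of immediately: if $\tilde{\chi}$ and $\tilde{\chi}'$ both make \eqref{dia:tilde-chi} commute then $\iota_2 \ast \tilde{\chi} = \tilde{\kappa} \circ \xi^{-1} = \iota_2 \ast \tilde{\chi}'$, and since $\iota_2$ is faithful the operation $\iota_2 \ast (-)$ is injective on natural transformations, whence $\tilde{\chi} = \tilde{\chi}'$.

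For existence, I would first precompose the exact functors $\psi, \overline{\phi}: \A(\beta_1) \rightarrow \A(\beta_2)$ with $\pi_1$, obtaining exact functors $\psi \circ \pi_1, \overline{\phi} \circ \pi_1: \A(\Dcal_1) \rightarrow \A(\beta_2)$. By the uniqueness clause of \Cref{thm_A(D)}(2)(i) these are the exact extensions of their restrictions to $\Dcal_1$, namely of $\psi \circ \pi_1|_{\Dcal_1}$ and of $\overline{\phi} \circ \pi_1|_{\Dcal_1} = \pi_2 \circ \phi$ (the latter by \eqref{equal:pi2-phi=barphi-pi1}). Hence \Cref{thm_A(D)}(2)(ii) applies to the natural isomorphism $\chi: \psi \circ \pi_1|_{\Dcal_1} \xrightarrow{\sim} \pi_2 \circ \phi$ of additive functors $\Dcal_1 \rightarrow \A(\beta_2)$ and produces a natural transformation $\chi^+: \psi \circ \pi_1 \rightarrow \overline{\phi} \circ \pi_1$ of functors $\A(\Dcal_1) \rightarrow \A(\beta_2)$, which is an isomorphism because the extension of $\chi^{-1}$ is a two-sided inverse (extensions being compatible with composition and identities, again by uniqueness). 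Since $\pi_1$ is a Serre quotient, hence a localization, precomposition with $\pi_1$ is fully faithful on functor categories out of $\A(\beta_1)$; so $\chi^+$ descends to a unique natural transformation $\tilde{\chi}: \psi \rightarrow \overline{\phi}$ with $\tilde{\chi} \ast \pi_1 = \chi^+$, and $\tilde{\chi}$ is an isomorphism because $\pi_1$ is surjective on objects and every component of $\tilde{\chi}$ is a component of $\chi^+$.

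It then remains to check that this $\tilde{\chi}$ makes \eqref{dia:tilde-chi} commute, i.e. that $\iota_2 \ast \tilde{\chi} = \tilde{\kappa} \circ \xi^{-1}$ as natural transformations $\A(\beta_1) \rightarrow \Acal_2$. As $\pi_1$ is a localization it is enough to verify this after whiskering with $\pi_1$; both sides then become natural transformations between the exact functors $\iota_2 \circ \psi \circ \pi_1$ and $\iota_2 \circ \overline{\phi} \circ \pi_1$ from $\A(\Dcal_1)$ to $\Acal_2$, so by the uniqueness clause of \Cref{thm_A(D)}(2)(ii) it suffices to compare them on $\Dcal_1$. There the left-hand side restricts to $\iota_2 \ast \chi$ (since $\chi^+$ extends $\chi$), while the right-hand side restricts to $(\tilde{\kappa} \ast \pi_1)|_{\Dcal_1} \circ (\xi^{-1} \ast \pi_1)|_{\Dcal_1} = \kappa \circ (\xi^{-1} \ast \pi_1)|_{\Dcal_1}$, where the equality is the commutativity of \eqref{dia-D1A2}. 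Finally, unwinding the hypothesis $(\iota_1,\iota_2;\xi) \circ (\pi_1,\pi_2;\chi) = (\beta_1,\beta_2;\kappa)$ via the composition formula of \Cref{nota:comp-1mor} yields precisely $\kappa = (\iota_2 \ast \chi) \circ (\xi \ast \pi_1)|_{\Dcal_1}$; substituting this cancels the $\xi$-factors and leaves $\iota_2 \ast \chi$, matching the left-hand side, which completes the verification.

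I expect the only genuinely delicate point to be the descent along $\pi_1$: one has to use that a Serre quotient functor is a localization and so induces a fully faithful functor on functor categories — equivalently, that a natural transformation between functors out of the quotient is nothing but a natural transformation between their restrictions to $\A(\Dcal_1)$, the compatibility with the inverted morphisms being automatic — and then to keep careful track of the whiskerings and of the silent identifications $\beta_i = \iota_i \circ \pi_i$ used throughout. Everything else is a purely formal manipulation of the universal properties recorded in \Cref{thm_A(D)}.
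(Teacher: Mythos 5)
Your argument is correct, and it reaches the same conclusion by a mildly different route than the paper. The paper constructs $\tilde{\chi}$ by applying \Cref{prop_lift-exfunct}(2) to the auxiliary exact $1$-morphism $(\phi,\psi;\chi): (\Dcal_1,\A(\beta_1);\pi_1) \rightarrow (\Dcal_2,\A(\beta_2);\pi_2)$ and then invoking the idempotency statement \Cref{lem:UAF-idem} to identify $\A(\pi_i)$ with $\A(\beta_i)$; you instead unpack that machinery by hand, extending $\chi$ over the Freyd hull $\A(\Dcal_1)$ via \Cref{thm_A(D)}(2)(ii) and descending along the Serre quotient $\pi_1: \A(\Dcal_1) \rightarrow \A(\beta_1)$ using the fact that a quotient functor is a localization, so that precomposition with it is fully faithful on functor categories. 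What you gain is a self-contained argument that makes the descent step explicit (the paper's appeal to "$\pi_1$ is a bijection on objects" inside \Cref{prop_lift-exfunct}(2) leaves the naturality of the descended transformation implicit, exactly the point you flag as delicate); what you lose is the reusability of the packaged statements, which the paper exploits again later. The final verification of \eqref{dia:tilde-chi} — whisker with $\pi_1$, restrict to $\Dcal_1$, use \eqref{dia-D1A2} and the compositional hypothesis on $(\iota_1,\iota_2;\xi) \circ (\pi_1,\pi_2;\chi)$ to cancel the $\xi$-factors — is the same computation as the paper's decomposition of the restricted triangle into three commutative pieces.
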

\begin{proof}
	Applying \Cref{prop_lift-exfunct}(2) to the $1$-morphism of abelian representations
	\begin{equation*}
		(\phi,\psi;\chi): (\Dcal_1,\A(\beta_1);\pi_1) \rightarrow (\Dcal_2,\A(\beta_2);\pi_2),
	\end{equation*}
    we obtain a canonical natural isomorphism of functors $\A(\beta_1) \rightarrow \A(\beta_2)$
    \begin{equation*}
    	\tilde{\chi}: \psi \circ \nu_1 \xrightarrow{\sim} \nu_2 \circ \bar{\phi}
    \end{equation*}
    making the diagram of functors $\A(\beta_1) \rightarrow \A(\beta_2)$
    \begin{equation*}
    	\begin{tikzcd}
    		\Phi \circ \pi_1 \arrow[equal]{d} \arrow{rr}{\kappa} && \pi_2 \circ \phi \arrow[equal]{d} \\
    		\Phi \circ \nu_1 \circ \rho_1 \arrow{r}{\tilde{\kappa}} & \nu_2 \circ \overline{\phi} \circ \rho_1 \arrow[equal]{r} & \nu_2 \circ \rho_2 \circ \phi
    	\end{tikzcd}
    \end{equation*}
    commute. By \Cref{lem:UAF-idem}, the two faithful exact functors $\nu_1: \A(\beta_1) \rightarrow \A(\pi_1)$ and $\nu_2: \A(\beta_2) \rightarrow \A(\pi_2)$ are in fact isomorphisms of categories; we can use these isomorphisms to identify $\A(\pi_1)$ and $\A(\pi_2)$ with $\A(\beta_1)$ and $\A(\beta_2)$, respectively. Thus we can interpret $\tilde{\chi}$ simply as a natural isomorphism of functors $\A(\beta_1) \rightarrow \A(\beta_2)$
    \begin{equation*}
    	\tilde{\chi}: \psi \xrightarrow{\sim} \overline{\phi}
    \end{equation*}  
    making the diagram of functors $\Dcal_1 \rightarrow \A(\beta_2)$
    \begin{equation}\label{dia:aux-chi}
    	\begin{tikzcd}
    		\psi \circ \pi_1 \arrow{rr}{\tilde{\chi}} \arrow{dr}{\chi} && \overline{\phi} \circ \pi_1 \arrow[equal]{dl} \\
    		& \pi_2 \circ \phi
    	\end{tikzcd}
    \end{equation}
    commute. Let us show that the diagram \eqref{dia:tilde-chi} is commutative. Since all the functors appearing in the diagram are exact functors on $\A(\beta_1)$, and the latter is generated under (co)kernels by the image of $\pi_1: \Dcal_1 \rightarrow \A(\beta_1)$, it suffices to show that the diagram of functors $\Dcal_1 \rightarrow \Acal_2$ obtained by restricting \eqref{dia:tilde-chi} along $\pi_1$
    \begin{equation*}
    	\begin{tikzcd}
    		\iota_2 \circ \psi \circ \pi_1 \arrow{rr}{\tilde{\chi}} \arrow{dr}{\xi} && \iota_2 \circ \overline{\phi} \circ \pi_1 \arrow{dl}{\tilde{\kappa}} \\
    		& \Phi \circ \iota_1 \circ \pi_1
    	\end{tikzcd}
    \end{equation*}
    is commutative. But the latter coincides with the outer part of the diagram
    \begin{equation*}
    	\begin{tikzcd}
    		\iota_2 \circ \psi \circ \pi_1 \arrow{rr}{\tilde{\chi}} \arrow[bend right=50]{ddddr}{\xi} \arrow[equal]{dr} && \iota_2 \circ \overline{\phi} \circ \pi_1 \arrow[bend left=50]{ddddl}{\tilde{\kappa}} \arrow{dl}{\chi} \\
    		& \iota_2 \circ \pi_2 \circ \phi \\
    		& \beta_2^+ \circ \phi \arrow[equal]{u} \\
    		& \Phi \circ \beta_1^+ \arrow[equal]{d} \arrow{u}{\kappa} \\
    		& \Phi \circ \iota_1 \circ \pi_1
    	\end{tikzcd}
    \end{equation*} 
    where the upper piece is the commutative diagram \eqref{dia:aux-chi} above, the left-most piece is commutative by hypothesis, and the right-most piece is commutative by \Cref{prop_lift-exfunct}(2). This concludes the proof.
\end{proof}

\subsection{Compatibility with diagram categories}

We conclude this section by deriving a few remarks about the behavior of universal abelian factorizations under categorical exponentiation; as a particular case, we deduce that universal abelian factorizations are canonically compatible with finite direct products.

\begin{nota}
	For every category $\Ccal$ and every small category $\Lcal$, we let $\Ccal^{\Lcal}$ denote the functor category $\Fun(\Lcal,\Ccal)$.
\end{nota}

The basic observation is that, given an additive category $\Dcal$ and a small category $\Lcal$, the two abelian categories $\A(\Dcal^{\Lcal})$ and $\A(\Dcal)^{\Lcal}$ are related by a canonical exact functor
\begin{equation*}
	\omega_{\Lcal}: \A(\Dcal^{\Lcal}) \rightarrow \A(\Dcal)^{\Lcal}
\end{equation*}
making the diagram 
\begin{equation*}
	\begin{tikzcd}
		& \Dcal^{\Lcal} \arrow{dl} \arrow{dr} \\
		\A(\Dcal^{\Lcal}) \arrow[dashed]{rr}{\omega_{\Lcal}} && \A(\Dcal)^{\Lcal} 
	\end{tikzcd}
\end{equation*}
commutative: both the existence and the uniqueness of $\omega_{\Lcal}$ are guaranteed by \Cref{thm_A(D)}(2)(i). In the next result, we extend this picture to the case of arbitrary abelian representations of $\Dcal$.

\begin{lem}\label{lem:Lcal}
	Let $(\Dcal,\Acal;\beta)$ be an abelian representation. Given a small category $\Lcal$, let
	\begin{equation*}
		\beta^{\Lcal}: \Dcal^{\Lcal} \rightarrow \Acal^{\Lcal}
	\end{equation*}
	denote the additive functor induced by post-composition with $\beta$. Consider the two exact functors
	\begin{equation*}
		\pi^{(\Lcal)}: \A(\Dcal^{\Lcal}) \rightarrow \A(\beta^{\Lcal}), \quad \iota^{\Lcal}: \A(\beta^{\Lcal}) \hookrightarrow \Acal^{\Lcal}
	\end{equation*}
    associated to the universal abelian factorization of the representation $(\Dcal^{\Lcal},\Acal^{\Lcal};\beta^{\Lcal})$. Then the following statements hold:
    \begin{enumerate}
        \item There exists a unique faithful exact functor
        \begin{equation*}
            \omega_{\Lcal}^{\beta}: \A(\beta^{\Lcal}) \rightarrow \A(\beta)^{\Lcal}
        \end{equation*}  
        making the diagram
        \begin{equation}\label{dia:omega_Lcal}
        	\begin{tikzcd}
        		\A(\Dcal^{\Lcal}) \arrow{d}{\pi^{(\Lcal)}} \arrow{rr}{\omega_{\Lcal}} && \A(\Dcal)^{\Lcal} \arrow{d}{\pi^{\Lcal}} \\
        		\A(\beta^{\Lcal}) \arrow{dr}{\iota^{(\Lcal)}} \arrow[dashed]{rr}{\omega_{\Lcal}^{\beta}} && \A(\beta)^{\Lcal} \arrow{dl}{\iota^{\Lcal}} \\
        		& \Acal^{\Lcal}
        	\end{tikzcd}
        \end{equation}
        commute.
        \item For every object $l \in \Lcal$, consider the two functors
        \begin{equation*}
        	l_{\Dcal}^*: \Dcal^{\Lcal} \rightarrow \Dcal, \quad l_{\Acal}^*: \Acal^{\Ical} \rightarrow \Acal
        \end{equation*}
        induced by evaluation at $l$. Then:
        \begin{enumerate}
        	\item[(i)] There exists a unique exact functor
        	\begin{equation*}
        		\overline{l_{\Dcal}^*}: \A(\beta^{\Lcal}) \rightarrow \A(\beta)
        	\end{equation*}
        	satisfying the equality of functors $\Dcal^{\Lcal} \rightarrow \A(\beta)$
        	\begin{equation}\label{eq:l^*}
        		\pi \circ l_{\Dcal}^* = \overline{l_{\Dcal}^*} \circ \pi^{(\Lcal)}
        	\end{equation}
        	as well as the equality of functors $\A(\beta^{\Ical}) \rightarrow \Acal$
        	\begin{equation*}
        		\iota \circ \overline{l_{\Dcal}^*} = l_{\Acal}^* \circ \iota^{(\Lcal)}.
        	\end{equation*}
            \item[(ii)] We have the equality
            \begin{equation*}
            	\overline{l_{\Dcal}^*} = l_{\A(\beta)}^* \circ \omega_{\Lcal}^{\beta}.
            \end{equation*}
            where $l_{\A(\beta)}^*: \A(\beta)^{\Lcal} \rightarrow \A(\beta)$ denotes the analogous functor induced by evaluation at $l$.
        \end{enumerate}  
     \end{enumerate}
\end{lem}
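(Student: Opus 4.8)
The plan is to deduce both parts formally from the universal properties already established, namely \Cref{prop:A(beta)}, \Cref{prop_lift-exfunct}, and the uniqueness clauses of \Cref{thm_A(D)}. The only genuine content beyond bookkeeping is upgrading the natural isomorphisms produced by these results to the strict equalities demanded in the statement, using the fact that $\A(\beta^{\Lcal})$ is generated under (co)kernels by the image of $\Dcal^{\Lcal}$.

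For part (1), I would apply \Cref{prop:A(beta)} to the abelian representation $(\Dcal^{\Lcal},\Acal^{\Lcal};\beta^{\Lcal})$, choosing $\Acal' := \A(\beta)^{\Lcal}$, letting $\Phi := \iota^{\Lcal}$ be the functor induced by post-composition with $\iota$, and letting $\phi: \Dcal^{\Lcal}\rightarrow\A(\beta)^{\Lcal}$ be the functor induced by post-composition with $\pi: \Dcal\rightarrow\A(\beta)$. The hypotheses are checked pointwise: $\Acal^{\Lcal}$ is abelian, $\iota^{\Lcal}$ is faithful and exact because faithfulness and (co)kernels in functor categories are computed objectwise, and $\iota^{\Lcal}\circ\phi=\beta^{\Lcal}$ because $\iota\circ\pi=\beta$. \Cref{prop:A(beta)} then produces the unique faithful exact functor $\omega_{\Lcal}^{\beta}: \A(\beta^{\Lcal})\rightarrow\A(\beta)^{\Lcal}$ with $\omega_{\Lcal}^{\beta}\circ\pi^{(\Lcal)}=\phi$ and $\iota^{\Lcal}\circ\omega_{\Lcal}^{\beta}=\iota^{(\Lcal)}$, the latter being the lower triangle of \eqref{dia:omega_Lcal}. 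For the upper square of \eqref{dia:omega_Lcal}, I would note that $\omega_{\Lcal}^{\beta}\circ\pi^{(\Lcal)}$ and $\pi^{\Lcal}\circ\omega_{\Lcal}$ are both exact functors $\A(\Dcal^{\Lcal})\rightarrow\A(\beta)^{\Lcal}$ whose restriction along the canonical embedding $\Dcal^{\Lcal}\hookrightarrow\A(\Dcal^{\Lcal})$ equals $\phi$ in both cases (using that $\omega_{\Lcal}$ restricts to the canonical embedding $\Dcal^{\Lcal}\hookrightarrow\A(\Dcal)^{\Lcal}$); by the uniqueness in \Cref{thm_A(D)}(2)(i) they coincide.

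For part (2)(i), I would package the evaluation functors into the $1$-morphism of abelian representations $\alpha_l:=(l_{\Dcal}^*,l_{\Acal}^*;\id):(\Dcal^{\Lcal},\Acal^{\Lcal};\beta^{\Lcal})\rightarrow(\Dcal,\Acal;\beta)$; the structural isomorphism is the identity since $l_{\Acal}^*\circ\beta^{\Lcal}=\beta\circ l_{\Dcal}^*$ on the nose, and $\alpha_l$ is exact because limits and colimits in $\Acal^{\Lcal}$ are computed objectwise. Applying \Cref{prop_lift-exfunct}(1) to $\alpha_l$ gives the unique exact functor $\overline{l_{\Dcal}^*}:\A(\beta^{\Lcal})\rightarrow\A(\beta)$ subject to \eqref{eq:l^*}, which already settles the uniqueness assertion in (2)(i). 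Applying \Cref{prop_lift-exfunct}(2) gives a natural isomorphism $\tilde{\kappa}_l: l_{\Acal}^*\circ\iota^{(\Lcal)}\xrightarrow{\sim}\iota\circ\overline{l_{\Dcal}^*}$, and the task is to show $\tilde{\kappa}_l$ is the identity. Reading off the characterizing diagram \eqref{dia-D1A2} with $\kappa=\id$, all vertical and horizontal arrows other than $\tilde{\kappa}_l$ are identities of functors, so the commutativity forces the whiskered transformation $\tilde{\kappa}_l\ast\pi^{(\Lcal)}$ to be the identity on $\Dcal^{\Lcal}$; since $\tilde{\kappa}_l$ is a natural transformation between exact functors on $\A(\beta^{\Lcal})$ and the latter is generated under (co)kernels by the image of $\Dcal^{\Lcal}$, it follows that $\tilde{\kappa}_l=\id$, which is exactly the equality $\iota\circ\overline{l_{\Dcal}^*}=l_{\Acal}^*\circ\iota^{(\Lcal)}$.

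For part (2)(ii), I would invoke the uniqueness just established in (2)(i): it suffices to verify that $l_{\A(\beta)}^*\circ\omega_{\Lcal}^{\beta}$ satisfies both characterizing equalities of $\overline{l_{\Dcal}^*}$. Equality \eqref{eq:l^*} follows by restricting along $\pi^{(\Lcal)}$, using the upper square of \eqref{dia:omega_Lcal} and the objectwise identity $l_{\A(\beta)}^*\circ\pi^{\Lcal}=\pi\circ l_{\A(\Dcal)}^*$; concretely both sides send $D_\bullet\in\Dcal^{\Lcal}$ to $\pi(D_\bullet(l))=\pi(l_{\Dcal}^*(D_\bullet))$. The $\iota$-equality follows from the objectwise identity $\iota\circ l_{\A(\beta)}^*=l_{\Acal}^*\circ\iota^{\Lcal}$ together with the lower triangle of \eqref{dia:omega_Lcal}, giving $\iota\circ l_{\A(\beta)}^*\circ\omega_{\Lcal}^{\beta}=l_{\Acal}^*\circ\iota^{\Lcal}\circ\omega_{\Lcal}^{\beta}=l_{\Acal}^*\circ\iota^{(\Lcal)}$. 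The bulk of the argument is routine verification of objectwise identities in functor categories; the one step that is not purely formal is the passage $\tilde{\kappa}_l=\id$ in part (2)(i), where one genuinely uses the recursive "generated under (co)kernels" description of $\A(\beta^{\Lcal})$, and which I therefore expect to be the main point requiring care.
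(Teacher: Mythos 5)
Your proposal is correct and follows essentially the same route as the paper: part (1) is obtained by applying \Cref{prop:A(beta)} to the triangle $\Dcal^{\Lcal}\rightarrow\A(\beta)^{\Lcal}\rightarrow\Acal^{\Lcal}$, part (2)(i) by applying \Cref{prop_lift-exfunct} to the exact $1$-morphism $(l_{\Dcal}^*,l_{\Acal}^*;\id)$, and part (2)(ii) by the uniqueness characterization via \eqref{eq:l^*}. You supply more detail than the paper does (notably the verification of the upper square of \eqref{dia:omega_Lcal} and the argument that $\tilde{\kappa}_l=\id$ when the input isomorphism is the identity), but these are exactly the points the paper leaves implicit, and your treatment of them is sound.
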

\begin{proof}
	\begin{enumerate}
		\item This follows applying \Cref{prop:A(beta)} to the obvious commutative triangle
		\begin{equation*}
			\begin{tikzcd}
				& \Dcal^{\Lcal} \arrow{dl} \arrow{dr} \\
				\A(\beta)^{\Lcal} \arrow{rr} && \Acal^{\Lcal}.
			\end{tikzcd}
		\end{equation*}
		\item Statement (i) follows applying \Cref{prop_lift-exfunct} to the exact morphism of abelian representations $(l_{\Dcal}^*, l_{\Acal}^*;\id): (\Dcal^{\Lcal},\Acal^{\Lcal};\beta^{\Lcal}) \rightarrow (\Dcal,\Acal;\beta)$, while (ii) follows from the fact that the functor $\overline{l_{\Dcal}^*}$ is uniquely characterized by the equation \eqref{eq:l^*}.
	\end{enumerate}
\end{proof}

\begin{cor}\label{cor:A(D)-prod}
	For $i = 1,\dots,n$ let $(\Dcal_i,\Acal_i;\beta_i)$ be an abelian representation, and consider the product representation
	\begin{equation*}
		\beta_1 \times \dots \times \beta_n: \Dcal_1 \times \dots \times \Dcal_n \rightarrow \Acal_1 \times \dots \times \Acal_n, \quad (D_1,\dots,D_n) \mapsto (\beta_1(D_1),\dots,\beta_n(D_n)).
	\end{equation*}
	Then we have a canonical equivalence of abelian categories
	\begin{equation*}
		\A(\beta_1 \times \dots \times \beta_n) = \A(\beta_1) \times \dots \times \A(\beta_n).
	\end{equation*}
\end{cor}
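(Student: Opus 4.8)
The plan is to reduce the statement to two facts: first, that Freyd's abelian hull commutes with finite products, i.e. $\A(\Dcal_1 \times \dots \times \Dcal_n) = \A(\Dcal_1) \times \dots \times \A(\Dcal_n)$ canonically; second, that the Serre quotient defining $\A(\beta_1 \times \dots \times \beta_n)$ is computed "factorwise". By an obvious induction it suffices to treat the case $n = 2$, so from now on I will work with two abelian representations $(\Dcal_1,\Acal_1;\beta_1)$ and $(\Dcal_2,\Acal_2;\beta_2)$.

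For the first fact, I would invoke the universal property in \Cref{thm_A(D)}(2)(i): the product category $\A(\Dcal_1) \times \A(\Dcal_2)$ is abelian, and it receives the additive functor $\Dcal_1 \times \Dcal_2 \hookrightarrow \A(\Dcal_1) \times \A(\Dcal_2)$; conversely, an additive functor $\Dcal_1 \times \Dcal_2 \rightarrow \Bcal$ into an abelian category $\Bcal$ does \emph{not} in general factor through the product of abelian hulls, so one cannot argue purely formally. Instead I would use the explicit description $\A(\Dcal) = \mathbf{L}(\mathbf{R}(\Dcal))$ together with the elementary observation that $\Fun((\Dcal_1 \times \Dcal_2)^{op},\Ab)$ is \emph{not} the product of the two presheaf categories — so the right statement is really that the finitely presented presheaves split up correctly. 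The clean way is: restricting along the two inclusions $\Dcal_i \hookrightarrow \Dcal_1 \times \Dcal_2$ gives additive functors, hence after applying \Cref{cor:A(D)} exact functors $\A(\Dcal_1 \times \Dcal_2) \rightarrow \A(\Dcal_i)$, and together they produce an exact functor $\A(\Dcal_1 \times \Dcal_2) \rightarrow \A(\Dcal_1) \times \A(\Dcal_2)$ compatible with the embeddings of $\Dcal_1 \times \Dcal_2$; conversely the inclusion $\Dcal_1 \times \Dcal_2 \hookrightarrow \A(\Dcal_1) \times \A(\Dcal_2)$ extends to an exact functor on $\A(\Dcal_1 \times \Dcal_2)$, and by the uniqueness in \Cref{thm_A(D)}(2)(i) the two composites are the respective identities. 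Alternatively, and perhaps more transparently, one checks directly on representables that every finitely presented presheaf on $\Dcal_1 \times \Dcal_2$ decomposes as a biproduct of one pulled back from $\Dcal_1$ and one from $\Dcal_2$, using that $\Hom_{\Dcal_1 \times \Dcal_2}((A_1,A_2),(B_1,B_2)) = \Hom_{\Dcal_1}(A_1,B_1) \oplus \Hom_{\Dcal_2}(A_2,B_2)$ and that cokernels are computed componentwise in a product.

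For the second fact, under the identification $\A(\Dcal_1 \times \Dcal_2) = \A(\Dcal_1) \times \A(\Dcal_2)$ the extended functor $(\beta_1 \times \beta_2)^+$ is identified with $\beta_1^+ \times \beta_2^+$ — this again follows from the uniqueness clause of \Cref{thm_A(D)}(2)(i), since $\beta_1^+ \times \beta_2^+$ is exact and restricts to $\beta_1 \times \beta_2$ on $\Dcal_1 \times \Dcal_2$. An object $(A_1,A_2)$ of the product lies in $\ker\{(\beta_1 \times \beta_2)^+\}$ if and only if $\beta_i^+(A_i) = 0$ for $i = 1,2$, so
\begin{equation*}
	\ker\{(\beta_1 \times \beta_2)^+\} = \ker(\beta_1^+) \times \ker(\beta_2^+)
\end{equation*}
as Serre subcategories of $\A(\Dcal_1) \times \A(\Dcal_2)$. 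It then remains to observe that Serre quotients commute with finite products, i.e. $(\Bcal_1 \times \Bcal_2)/(\Scal_1 \times \Scal_2) = (\Bcal_1/\Scal_1) \times (\Bcal_2/\Scal_2)$, which is immediate from the universal property of the Serre quotient (an exact functor out of $\Bcal_1 \times \Bcal_2$ killing $\Scal_1 \times \Scal_2$ is the same as a pair of exact functors out of $\Bcal_i$ killing $\Scal_i$, since $\Scal_i = (\{0\} \times \Scal_i) \cup \ldots$ sits inside $\Scal_1 \times \Scal_2$). Chaining the three identifications gives
\begin{equation*}
	\A(\beta_1 \times \beta_2) = \A(\Dcal_1 \times \Dcal_2)/\ker\{(\beta_1 \times \beta_2)^+\} = \bigl(\A(\Dcal_1)/\ker(\beta_1^+)\bigr) \times \bigl(\A(\Dcal_2)/\ker(\beta_2^+)\bigr) = \A(\beta_1) \times \A(\beta_2),
\end{equation*}
and one notes the equivalence is compatible with the $\pi$'s and $\iota$'s by construction, or by appeal to the universal property in \Cref{prop:A(beta)}.

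I expect the only genuine obstacle to be the first fact, $\A(\Dcal_1 \times \Dcal_2) = \A(\Dcal_1) \times \A(\Dcal_2)$: this is not a formal consequence of the universal property alone (the product of abelian hulls does \emph{not} have the universal property of the abelian hull of the product), so it forces one either to unwind Freyd's explicit construction via finitely presented presheaves or to argue carefully with the two restriction functors and a uniqueness argument. Everything downstream — identifying the kernel, and commuting Serre quotients with products — is routine. One should also double-check that the claimed equivalence is canonical in the sense of being compatible with $\pi_1 \times \pi_2$ and $\iota_1 \times \iota_2$; this is best done by citing \Cref{prop:A(beta)}, observing that both $\A(\beta_1 \times \beta_2)$ and $\A(\beta_1) \times \A(\beta_2)$ solve the same universal problem relative to the faithful exact functor $\iota_1 \times \iota_2 : \A(\beta_1) \times \A(\beta_2) \to \Acal_1 \times \Acal_2$.
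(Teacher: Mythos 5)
Your proposal is correct, but it takes a genuinely different route from the paper. The paper never passes through the abelian hulls: it applies \Cref{prop_lift-exfunct} directly to the two projections $(\Dcal_1\times\Dcal_2,\Acal_1\times\Acal_2;\beta_1\times\beta_2)\rightarrow(\Dcal_i,\Acal_i;\beta_i)$ and to the two coordinate inclusions $(\Dcal_i,\Acal_i;\beta_i)\rightarrow(\Dcal_1\times\Dcal_2,\Acal_1\times\Acal_2;\beta_1\times\beta_2)$, assembles the resulting exact functors into mutually inverse comparison functors between $\A(\beta_1\times\beta_2)$ and $\A(\beta_1)\times\A(\beta_2)$, and concludes by viewing both sides as (non-full) abelian subcategories of $\Acal_1\times\Acal_2$ via the faithful exact functors $\iota_{1,2}$ and $\iota_1\times\iota_2$. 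Your decomposition into the three steps $\A(\Dcal_1\times\Dcal_2)=\A(\Dcal_1)\times\A(\Dcal_2)$, $\ker\{(\beta_1\times\beta_2)^+\}=\ker(\beta_1^+)\times\ker(\beta_2^+)$, and compatibility of Serre quotients with finite products is also valid and yields strictly more information (the statement about abelian hulls is not recorded anywhere in the paper); the price is that you must actually establish the first step, which the paper's argument sidesteps entirely. One inaccuracy worth correcting: your claim that $\A(\Dcal_1)\times\A(\Dcal_2)$ does \emph{not} satisfy the universal property of the abelian hull of $\Dcal_1\times\Dcal_2$ is wrong. Since $(D_1,D_2)=(D_1,0)\oplus(0,D_2)$, any additive functor $F:\Dcal_1\times\Dcal_2\rightarrow\Bcal$ decomposes as $F_1(D_1)\oplus F_2(D_2)$ with $F_i$ the restriction to the $i$-th factor, and likewise any exact functor out of $\A(\Dcal_1)\times\A(\Dcal_2)$ is determined by its restrictions to the factors; applying \Cref{thm_A(D)}(2)(i) to each $F_i$ therefore produces the unique exact extension, so the first step \emph{is} a formal consequence of the universal property. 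This does not affect the validity of your proof, since the two arguments you actually offer (the restriction-plus-uniqueness argument, and the explicit decomposition of finitely presented presheaves on a product) both work; it only means the step you flagged as the "only genuine obstacle" is in fact routine.
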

\begin{proof}
	It suffices to prove the result for $n = 2$; the general case will then follow by induction.
	In the case $n = 2$, applying \Cref{prop_lift-exfunct} to the obvious morphisms of representations
	\begin{equation*}
		\begin{tikzcd}
			\Dcal_1 \times \Dcal_2 \arrow{r}{pr_1} \arrow{d}{\beta_1 \times \beta_2} & \Dcal_1 \arrow{d}{\beta_1} \\
			\Acal_1 \times \Acal_2 \arrow{r}{pr_1} & \Acal_1
		\end{tikzcd}
		\qquad
		\begin{tikzcd}
			\Dcal_1 \times \Dcal_2 \arrow{r}{pr_2} \arrow{d}{\beta_1 \times \beta_2} & \Dcal_2 \arrow{d}{\beta_2} \\
			\Acal_1 \times \Acal_2 \arrow{r}{pr_2} & \Acal_2
		\end{tikzcd}
	\end{equation*}
	we get canonical exact functors $\A(\beta_1 \times \beta_2) \rightarrow \A(\beta_i)$, $i = 1,2$. Together, they yield an exact functor
	\begin{equation}\label{A12_to_A1A2}
		\A(\beta_1 \times \beta_2) \rightarrow \A(\beta_1) \times \A(\beta_2)
	\end{equation}
	fitting into a commutative diagram of the form
	\begin{equation*}
		\begin{tikzcd}
			\A(\beta_1 \times \beta_2) \arrow{rr} \arrow{dr}{\iota_{1,2}} && \A(\beta_1) \times \A(\beta_2) \arrow{dl}{\iota_1 \times \iota_2} \\
			& \Acal_1 \times \Acal_2
		\end{tikzcd}
	\end{equation*}
	Conversely, applying \Cref{prop_lift-exfunct} to the obvious morphisms of representations
	\begin{equation*}
		\begin{tikzcd}
			\Dcal_1 \arrow{r}{(\id,0)} \arrow{d}{\beta_1} & \Dcal_1 \times \Dcal_2 \arrow{d}{\beta_1 \times \beta_2} \\
			\Acal_1 \arrow{r}{(\id,0)} & \Acal_1 \times \Acal_2
		\end{tikzcd}
		\qquad
		\begin{tikzcd}
			\Dcal_2 \arrow{r}{(0,\id)} \arrow{d}{\beta_2} & \Dcal_1 \times \Dcal_2 \arrow{d}{\beta_1 \times \beta_2} \\
			\Acal_2 \arrow{r}{(0,\id)} & \Acal_1 \times \Acal_2
		\end{tikzcd}
	\end{equation*}
	we get canonical exact functors $\A(\beta_i) \rightarrow \A(\beta_1 \times \beta_2)$, $i = 1,2$. Together, they yield an exact functor
	\begin{equation}\label{A1A2_to_A12}
		\A(\beta_1) \times \A(\beta_2) \rightarrow \A(\beta_1 \times \beta_2)
	\end{equation}
	fitting into a commutative diagram of the form
	\begin{equation*}
		\begin{tikzcd}
			\A(\beta_1) \times \A(\beta_2) \arrow{rr} \arrow{dr}{\iota_1 \times \iota_2} && \A(\beta_1 \times \beta_2) \arrow{dl}{\iota_{1,2}} \\
			& \Acal_1 \times \Acal_2.
		\end{tikzcd}
	\end{equation*}
	Regarding both $\A(\beta_1 \times \beta_2)$ and $\A(\beta_1) \times \A(\beta_2)$ as (not necessarily full) abelian subcategories of $\Acal_1 \times \Acal_2$, we deduce that the two functors \eqref{A12_to_A1A2} and \eqref{A1A2_to_A12} are canonically mutually quasi-inverse equivalences.
\end{proof}

\section{Lifting natural transformations}\label{sect:lift-nat}

In this section we explain how to define natural transformations between exact functors on universal abelian factorizations. Again, the main result below is essentially a reformulation of \cite[Prop.~6.4]{IvPerv} to the setting of \cite{BV-P} and a similar result appears in \cite[\S~1]{IM19}. After discussing the general result, we specialize this discussion to the cases of adjunctions and equivalences, which is inspired by \cite[\S~2.7]{IM19}.

\subsection{Main result}

Following the approach of the previous sections, we formalize our main definitions using a natural notion of $2$-morphism of (additive) representations:

\begin{defn}\label{defn:2mor-repr}
	For $i = 1,2$ let $(\Ccal_i,\Ecal_i;R_i)$ be a representation; moreover, let $\alpha = (F,G;\kappa)$ and $\alpha' = (F',G';\kappa')$ be two morphisms of representations $(\Ccal_1,\Ecal_1;R_1) \rightarrow (\Ccal_2,\Ecal_2;R_2)$.	We call \textit{$2$-morphism} from $\alpha$ to $\alpha'$ a pair $(\lambda,\Lambda)$ consisting of
	\begin{itemize}
		\item a natural transformation of functors $\Ccal_1 \rightarrow \Ccal_2$
		\begin{equation*}
			\lambda: F(C_1) \rightarrow F'(C_1),
		\end{equation*}
		\item a natural transformation of functors $\Ecal_1 \rightarrow \Ecal_2$
		\begin{equation*}
			\Lambda: G(E_1) \rightarrow G'(E_1)
		\end{equation*}
	\end{itemize}
	such that the diagram of functors $\Ccal_1 \rightarrow \Ecal_2$
	\begin{equation*}
		\begin{tikzcd}
			G R_1(C_1) \arrow{r}{\Lambda} \arrow{d}{\kappa} & G' R_1(C_1) \arrow{d}{\kappa'} \\
			R_2 F(C_1) \arrow{r}{\lambda} & R_2 F'(C_1)
		\end{tikzcd}
	\end{equation*}
	commutes. We write it as $(\lambda,\Lambda): \alpha \rightarrow \alpha'$, and we often depict it as a diagram of the form
	\begin{equation*}
		\begin{tikzcd}
			\Ccal_1 \arrow{rr}{F} \arrow{d}{R_1} && \Ccal_2 \arrow{d}{R_2} \\
			\Ecal_1 \arrow{rr}{G} && \Ecal_2 \\
			& \Downarrow \\
			\Ccal_1 \arrow{rr}{F'} \arrow{d}{R_1} && \Ccal_2 \arrow{d}{R_2} \\
			\Ecal_1 \arrow{rr}{G'} && \Ecal_2
		\end{tikzcd}
	\end{equation*}
	where, for notational simplicity, we do not write the natural transformations $\lambda$ and $\Lambda$ explicitly.
\end{defn}

\begin{rem}\label{rem:2mor-repr}
	Note that giving two (additive) functors $F,F': \Ccal_1 \rightarrow \Ecal_1$ together with a natural transformation $\lambda: F \rightarrow F'$ is the same a giving a single (additive) functor
	\begin{equation*}
		(F \xrightarrow{\lambda} F'): \Ccal_1 \rightarrow \Ccal_2^{\Ical}, \quad C_1 \rightsquigarrow (F(C_1),F'(C_1);\lambda(C_1)),
	\end{equation*}
	where $\Ical$ denotes the category with two objects and exactly one non-identity arrow between them. Similarly, giving two (additive) functors $G,G': \Ecal_1 \rightarrow \Ecal_2$ together with a natural transformation $\Lambda: G \rightarrow G'$ is the same as giving a single (additive) functor
	\begin{equation*}
		(G \xrightarrow{\Lambda} G'): \Ecal_1 \rightarrow \Ecal_2^{\Ical}, \quad E_1 \rightsquigarrow (G(E_1),G'(E_1);\Lambda(E_1)).
	\end{equation*}
	Therefore we obtain two additive representations
	\begin{equation*}
		(G \circ R_1 \xrightarrow{\Lambda} G' \circ R_1): \Ccal_1 \xrightarrow{R_1} \Ecal_1 \xrightarrow{(G \xrightarrow{\Lambda} G')} \Ecal_2^{\Ical}, \qquad (R_2 \circ F \xrightarrow{\lambda} R_2 \circ F'): \Ccal_1 \xrightarrow{(F \xrightarrow{\lambda} F')} \Ccal_2^{\Ical} \xrightarrow{R_2} \Ecal_2^{\Ical}.
	\end{equation*} 
	Moreover, if the categories $\Ecal_1$ and $\Ecal_2$ are abelian, then the functor $(G \xrightarrow{\Lambda} G')$ is exact if and only if $G$ and $G'$ are both exact. 
	By construction, saying that the pair $(\lambda,\Lambda)$ defines a $2$-morphism $\alpha \rightarrow \alpha'$ is equivalent to saying that the two natural isomorphisms
	\begin{equation*}
		\kappa: G R_1(C_1) \xrightarrow{\sim} R_2 F(C_1), \qquad \kappa': G' R_1(C_1) \xrightarrow{\sim} R_2 F'(C_1)
	\end{equation*}
	define a natural isomorphism of functors $\Ccal_1 \rightarrow \Ecal_2^{\Ical}$
	\begin{equation*}
		(\kappa,\kappa'): (G \circ R_1 \xrightarrow{\Lambda} G' \circ R_1) \rightarrow (R_2 \circ F \xrightarrow{\lambda} R_2 \circ F').
	\end{equation*}
	In this way, we obtain a $1$-morphism of representation
	\begin{equation*}
		(\id_{\Ccal_1}, \id_{\Ecal_2^{\Ical}}; (\kappa,\kappa')): (\Ccal_1,\Ecal_2^{\Ical};(G \circ R_1 \xrightarrow{\Lambda} G' \circ R_1)) \rightarrow (\Ccal_1,\Ecal_2^{\Ical};(R_2 \circ F \xrightarrow{\lambda} R_2 \circ F')).
	\end{equation*}
\end{rem} 

\begin{rem}\label{rem:comp-2mor}
	There exists a natural notion of composition for $2$-morphisms of representations: given three $1$-morphisms $\alpha_1 = (F_1, G_1; \kappa_1)$, $\alpha_2 = (F_2, G_2; \kappa_2)$ and $\alpha_3 = (F_3, G_3; \kappa_3)$ between representations $(\Ccal_1,\Ecal_1;R_1) \rightarrow (\Ccal_2,\Ecal_2;R_2)$ as well as two $2$-morphisms of representations $(\lambda_1,\Lambda_1): \alpha_1 \rightarrow \alpha_2$ and $(\lambda_2,\Lambda_2): \alpha_2 \rightarrow \alpha_3$, we define the composite $2$-morphism of representations as the triple
	\begin{equation*}
		(\lambda_2 \circ \lambda_1, \Lambda_2 \circ \Lambda_1): \alpha_1 \rightarrow \alpha_3,
	\end{equation*}
    where $\lambda_2 \circ \lambda_1$ and $\Lambda_2 \circ \Lambda_1$ denote the usual composite natural transformations between functors $\Ccal_1 \rightarrow \Ccal_2$ and $\Ecal_1 \rightarrow \Ecal_2$, respectively. 
\end{rem}

Here is the main result of this section:

\begin{prop}\label{prop_nat-lift}
	For $i = 1,2$ let $(\Dcal_i,\Acal_i;\beta_i)$ be an abelian representation; in addition, let $\alpha = (\phi,\Phi;\kappa)$ and $\alpha' = (\phi',\Phi';\kappa')$ be two exact $1$-morphisms of abelian representations $(\Dcal_1,\Acal_1;\beta_1) \rightarrow (\Dcal_2,\Acal_2;\beta_2)$. Suppose that we are given a $2$-morphism  $(\lambda,\Lambda): \alpha \rightarrow \alpha'$.
	Then there exists a unique natural transformation of functors $\A(\beta_1) \rightarrow \A(\beta_2)$
	\begin{equation*}
		\overline{\lambda}: \overline{\phi}(M) \rightarrow \overline{\phi'}(M)
	\end{equation*}
	such that the pair $(\lambda,\overline{\lambda})$ defines a $2$-morphism $\hat{\alpha} \rightarrow \hat{\alpha'}$ and the pair $(\overline{\lambda},\Lambda)$ defines a $2$-morphism $\tilde{\alpha} \rightarrow \tilde{\alpha'}$.
\end{prop}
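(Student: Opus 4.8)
The plan is to reduce the statement, just as in the proof of \Cref{prop_lift-exfunct}, to the uniqueness clauses of \Cref{thm_A(D)}(2) and to the universal property of the Serre quotient, using the arrow category $\Ical$ to encode the sought natural transformation $\overline{\lambda}$ as a single functor into an arrow category. \emph{Uniqueness} is immediate: since $\A(\beta_1)$ is generated under (co)kernels by the image of $\pi_1 \colon \Dcal_1 \to \A(\beta_1)$, any natural transformation between the exact functors $\overline{\phi}, \overline{\phi'} \colon \A(\beta_1) \to \A(\beta_2)$ is determined by its whiskering with $\pi_1$; unravelling \Cref{defn:2mor-repr} in the presence of the equalities \eqref{equal:pi2-phi=barphi-pi1}, the requirement that $(\lambda, \overline{\lambda})$ be a $2$-morphism $\hat{\alpha} \to \hat{\alpha'}$ forces $\overline{\lambda} \star \pi_1 = \pi_2 \star \lambda$, which already pins $\overline{\lambda}$ down. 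So at most one candidate can satisfy even the first of the two required conditions, and it suffices to produce one that satisfies both.

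\emph{Existence.} First I would use \Cref{cor:A(D)}(2) to lift $\lambda$ to a natural transformation $\lambda^{+} \colon \phi^{+} \to {\phi'}^{+}$ of exact functors $\A(\Dcal_1) \to \A(\Dcal_2)$, and whisker it on the left with $\pi_2$ to obtain a natural transformation $\theta \colon \overline{\phi} \circ \pi_1 \to \overline{\phi'} \circ \pi_1$ of exact functors $\A(\Dcal_1) \to \A(\beta_2)$; here I use the equalities $\pi_2 \circ \phi^{+} = \overline{\phi} \circ \pi_1$ and $\pi_2 \circ {\phi'}^{+} = \overline{\phi'} \circ \pi_1$, both of which follow from \eqref{equal:pi2-phi=barphi-pi1} together with the uniqueness of exact extensions in \Cref{thm_A(D)}(2)(i). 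The crucial step is then to descend $\theta$ along the Serre quotient $\pi_1$. As in the proof of \Cref{thm_A(D)}(2)(ii), I would repackage $\theta$ as a single \emph{exact} functor $\A(\Dcal_1) \to \A(\beta_2)^{\Ical}$ (exact because both its components $\overline{\phi} \circ \pi_1$ and $\overline{\phi'} \circ \pi_1$ are exact). Because $\iota_1$ is conservative, $\ker(\pi_1) = \ker(\beta_1^{+})$; and whenever $\pi_1(A) = 0$ one has $\overline{\phi}(\pi_1 A) = \overline{\phi'}(\pi_1 A) = 0$, so this functor sends $\ker(\beta_1^{+})$ to the zero object of $\A(\beta_2)^{\Ical}$. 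Hence it factors uniquely through $\pi_1$, yielding an exact functor $\A(\beta_1) \to \A(\beta_2)^{\Ical}$; its two components are exact functors $\A(\beta_1) \to \A(\beta_2)$ that agree with $\overline{\phi}$ and $\overline{\phi'}$ after composition with $\pi_1$, hence (exactness together with the generation of $\A(\beta_1)$ under (co)kernels by the image of $\pi_1$) equal to $\overline{\phi}$ and $\overline{\phi'}$. We let $\overline{\lambda} \colon \overline{\phi} \to \overline{\phi'}$ be the natural transformation between them, so that by construction $\overline{\lambda} \star \pi_1 = \theta$.

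\emph{Checking the two conditions.} Restricting $\overline{\lambda} \star \pi_1 = \theta$ further along $\Dcal_1 \hookrightarrow \A(\Dcal_1)$ and using that $\lambda^{+}$ extends $\lambda$ gives $\overline{\lambda} \star \pi_1 = \pi_2 \star \lambda$ on $\Dcal_1$, which is exactly the condition for $(\lambda, \overline{\lambda})$ to be a $2$-morphism $\hat{\alpha} \to \hat{\alpha'}$. For the second condition one has to check that the square of exact functors $\A(\beta_1) \to \Acal_2$ formed by $\tilde{\kappa}$, $\tilde{\kappa'}$, $\Lambda \star \iota_1$ and $\iota_2 \star \overline{\lambda}$ commutes; all four edges being natural transformations between exact functors on $\A(\beta_1)$, it suffices to check this after whiskering with $\pi_1$. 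Using $\iota_i \circ \pi_i = \beta_i$, the equalities \eqref{equal:pi2-phi=barphi-pi1}, the relations $\tilde{\kappa} \star \pi_1 = \kappa$ and $\tilde{\kappa'} \star \pi_1 = \kappa'$ coming from the commutativity of \eqref{dia-D1A2}, and $\iota_2 \star \overline{\lambda} \star \pi_1 = \beta_2 \star \lambda$ (from the relation just established), the restricted square reduces to
\begin{equation*}
	\begin{tikzcd}
		\Phi \circ \beta_1 \arrow{rr}{\Lambda} \arrow{d}{\kappa} && \Phi' \circ \beta_1 \arrow{d}{\kappa'} \\
		\beta_2 \circ \phi \arrow{rr}{\lambda} && \beta_2 \circ \phi'
	\end{tikzcd}
\end{equation*}
which is precisely the defining square of the $2$-morphism $(\lambda, \Lambda) \colon \alpha \to \alpha'$ from \Cref{defn:2mor-repr}, and hence commutes by hypothesis.

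\emph{Main obstacle.} The only non-formal point is the descent of $\theta$ along $\pi_1$: a natural transformation between exact functors that have each been precomposed with a Serre quotient does not descend for free, and the argument above handles this by turning $\theta$ into a bona fide exact functor into $\A(\beta_2)^{\Ical}$ and invoking the universal property of the Serre quotient; everything else is bookkeeping with whiskerings and the uniqueness statements of \Cref{thm_A(D)}(2) and \Cref{prop_lift-exfunct}. Alternatively, one can obtain $\overline{\lambda}$ in one stroke by applying \Cref{prop_lift-exfunct} to the exact $1$-morphism $(\Dcal_1, \Acal_1; \beta_1) \to (\Dcal_2^{\Ical}, \Acal_2^{\Ical}; \beta_2^{\Ical})$ which encodes $(\lambda, \Lambda)$ in the sense of \Cref{rem:2mor-repr}, and then pushing the resulting functor forward along the comparison functor $\omega_{\Ical}^{\beta_2} \colon \A(\beta_2^{\Ical}) \to \A(\beta_2)^{\Ical}$ of \Cref{lem:Lcal}, identifying its two components with $\overline{\phi}$ and $\overline{\phi'}$ via \Cref{lem:Lcal}(2) and the compatibility of the lifting with composition.
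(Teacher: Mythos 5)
Your proof is correct, and your primary existence argument takes a mildly but genuinely different route from the paper's. The paper encodes the pair $(\lambda,\Lambda)$ together with $(\kappa,\kappa')$ as a single exact $1$-morphism into the diagram representation $(\Dcal_2^{\Ical},\Acal_2^{\Ical};\beta_2^{\Ical})$, applies \Cref{prop_lift-exfunct} to land in $\A(\beta_2^{\Ical})$, and then extracts $\overline{\lambda}$ by pushing forward along the comparison functor $\omega_{\Ical}^{\beta_2}$ of \Cref{lem:Lcal}; the second compatibility is then obtained from the uniqueness clause of \Cref{prop_lift-exfunct}(2), which forces $\widetilde{(\kappa,\kappa')}=(\tilde{\kappa},\tilde{\kappa'})$. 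You instead lift only $\lambda$ to $\lambda^{+}$ on abelian hulls, whisker with $\pi_2$, and descend the result along the Serre quotient by packaging it as an exact functor into $\A(\beta_2)^{\Ical}$ that kills $\ker(\beta_1^{+})$ --- which is indeed the only non-formal point, and you handle it correctly --- while the second compatibility is checked by a direct whiskering computation reducing to the defining square of $(\lambda,\Lambda)$. Your version has the small advantage of making transparent that the construction of $\overline{\lambda}$ uses only $\lambda$ (cf.\ \Cref{rem_comp-nat-lift}(1)), with $\Lambda$ entering only in the verification, at the cost of redoing the Serre-quotient descent by hand; the paper's version reuses the already-established lifting machinery as a black box and gets the first compatibility for free by construction. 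The alternative you sketch in your last sentence is precisely the paper's proof.
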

\begin{proof}
	Since the abelian categories $\A(\beta_1)$ is generated by the image of $\Dcal_1$ under (co)kernels and the functors $\overline{\phi}, \overline{\phi'}: \A(\beta_1) \rightarrow \A(\beta_2)$ are exact, a natural transformation $\overline{\lambda}$ as in the statement is uniquely determined by the property that $(\lambda,\overline{\lambda})$ defines a $2$-morphism, provided it exists.	
	
	In order to show the existence of $\overline{\lambda}$, we let $\Ical$ denote the category with two objects $1$ and $2$ and exactly one non-identity arrow $1 \rightarrow 2$. Following the notation of \Cref{lem:Lcal}, consider the abelian representation $(\Dcal_2^{\Ical},\Acal_2^{\Ical};\beta_2^{\Ical})$ and its universal abelian factorization $\A(\beta^{\Ical})$.
	Applying \Cref{prop_lift-exfunct} to the exact morphism of abelian representations
	\begin{equation*}
		((\phi \xrightarrow{\lambda} \phi'), (\Phi \xrightarrow{\Lambda} \Phi'); (\kappa,\kappa')): (\Dcal_1,\Acal_1;\beta_1) \rightarrow (\Dcal_2^{\Ical},\Acal_2^{\Ical};\beta_2^{\Ical})
	\end{equation*} 
	obtained as in \Cref{rem:2mor-repr}, we get a canonical exact functor
	\begin{equation*}
		\overline{(\phi \xrightarrow{\lambda} \phi')}: \A(\beta_1) \rightarrow \A(\beta_2^{\Ical}) 
	\end{equation*}
	satisfying the equality of diagram representations $\Dcal_1 \rightarrow \A(\beta_2^{\Ical})$
	\begin{equation*}
		\pi_2^{(\Ical)} \circ (\phi \xrightarrow{\lambda} \phi') = \overline{(\phi \xrightarrow{\lambda} \phi')} \circ \pi_1,
	\end{equation*}
	together with a canonical natural isomorphism of functors $\A(\beta_1) \rightarrow \Acal^{\Ical}$
	\begin{equation*}
		\widetilde{(\kappa,\kappa')}: (\Phi \xrightarrow{\Lambda} \Phi') \circ \iota_1 \xrightarrow{\sim} \iota_2^{(\Ical)} \circ \overline{(\phi \xrightarrow{\lambda} \phi')}
	\end{equation*}
	which is uniquely characterized by the commutativity of the diagram of functors $\Dcal_1 \rightarrow \Acal_2^{\Ical}$
	\begin{equation*}
		\begin{tikzcd}
			(\Phi \xrightarrow{\Lambda} \Phi') \circ \beta_1 \arrow{rr}{(\kappa,\kappa')} \arrow[equal]{d} && \beta_2^{\Ical} \circ (\phi \xrightarrow{\lambda} \phi') \arrow[equal]{d} \\
			(\Phi \xrightarrow{\Lambda} \Phi') \circ \iota_1 \circ \pi_1 \arrow{r}{\widetilde{(\kappa,\kappa')}} & \iota_2^{(\Ical)} \circ \overline{(\phi \xrightarrow{\lambda} \phi')} \circ \iota_1 \arrow[equal]{r} & \iota_2^{(\Ical)} \circ \pi_2^{(\Ical)} \circ (\phi \xrightarrow{\lambda} \phi').
		\end{tikzcd}
	\end{equation*}
	Note that, as a consequence of \Cref{lem:Lcal}(2), the two composite exact functors
	\begin{equation*}
		\A(\beta_1) \xrightarrow{\overline{(\phi \xrightarrow{\lambda} \phi')}} \A(\beta_2^{\Ical}) \xrightarrow{\overline{1^*_{\Dcal_2}}} \A(\beta_2), \qquad \A(\beta_1) \xrightarrow{\overline{(\phi \xrightarrow{\lambda} \phi')}} \A(\beta_2^{\Ical}) \xrightarrow{\overline{2^*_{\Dcal_2}}} \A(\beta_2)
	\end{equation*}
	coincide with $\overline{\phi}$ and $\overline{\phi'}$, respectively. Hence we can define the sought-after natural transformation $\overline{\lambda}$ be declaring that its value on a given object $M_1 \in \A(\beta_1)$ is determined by the equation
	\begin{equation*}
		(\omega_{\Ical}^{\beta_2} \circ \overline{(\phi \xrightarrow{\lambda} \phi')})(M_1) = (\overline{\phi}(M_1),\overline{\phi'}(M_1);\overline{\lambda}(M_1)).
	\end{equation*}
	It is clear by construction that the diagram of functors $\Dcal_1 \rightarrow \A(\beta_2)$
	\begin{equation*}
		\begin{tikzcd}
			\overline{\phi} \circ \pi_1 \arrow{r}{\overline{\lambda}} \arrow[equal]{d} & \overline{\phi'} \circ \pi_1 \arrow[equal]{d} \\
			\pi_2 \circ \phi \arrow{r}{\lambda} & \pi_2 \circ \phi
		\end{tikzcd}
	\end{equation*}
	is commutative. Hence the pair $(\lambda,\overline{\lambda})$ defines a $2$-morphism of representations.
	Moreover, as a consequence of the uniqueness property of the natural transformation $\widetilde{(\kappa,\kappa')}$, we must have the equality
	\begin{equation*}
		\widetilde{(\kappa,\kappa')} = (\tilde{\kappa},\tilde{\kappa'}).
	\end{equation*} 
	By \Cref{rem:2mor-repr}, saying that the pair $(\tilde{\kappa},\tilde{\kappa'})$ defines a natural transformation of functors $\A(\beta_1) \rightarrow \Acal^{\Ical}$ is equivalent to saying that the pair $(\overline{\lambda},\Lambda)$ defines a $2$-morphism of representations.
\end{proof}

\begin{rem}\label{rem_comp-nat-lift}
	\begin{enumerate}
		\item Note that, while the existence of the natural transformation $\overline{\lambda}: \overline{\phi} \rightarrow \overline{\phi'}$ follows from the existence of $\Lambda$ (and from its compatibility with $\lambda$), the actual expression of $\overline{\lambda}$ only depends on $\lambda$.
		\item As a consequence of the uniqueness in the statement of \Cref{prop_nat-lift}, we see that the assignment $(\lambda,\Lambda) \mapsto \bar{\lambda}$ is compatible with composition of $2$-morphisms of representations (see \Cref{rem:comp-2mor}) in the following sense: Given, for $i = 1,2,3$, a morphism of abelian representations $\alpha_i = (\phi_i,\Phi_i;\theta_i): (\Dcal_1,\Acal_1;\beta_1) \rightarrow (\Dcal_2,\Acal_2;\beta_2)$ and given, for $i = 1,2$, a $2$-morphism $(\lambda_i,\Lambda_i): \alpha_i \rightarrow \alpha_{i+1}$, we have the equality of natural transformations of functors $\A(\beta_1) \rightarrow \Acal(\beta_2)$
		\begin{equation*}
			\overline{\lambda_2 \circ \lambda_1} = \overline{\lambda_2} \circ \overline{\lambda_1}.
		\end{equation*}
		\item As a consequence of the previous two points, we deduce that the natural transformation $\overline{\lambda}: \overline{\phi} \rightarrow \overline{\phi'}$ is invertible as soon as $\lambda: \phi \rightarrow \phi'$ is invertible.
	\end{enumerate}
\end{rem}

\subsection{The cases of adjunctions and equivalences}

The results collected so far can be applied to lifting adjunctions between exact functors; this method has been successfully used in the proof of \cite[\S~2.7]{IM19}, and the proof of our general result below is not harder than the latter. 

In order to state our result correctly, we need to use the notion of adjunction in its precise form. Thus we say that an \textit{adjunction}
\begin{equation*}
	F_1: \Ccal_1 \rightleftarrows \Ccal_2: F_2
\end{equation*}
consists of two functors $F_1: \Ccal_1 \rightarrow \Ccal_2$ and $F_2: \Ccal_2 \rightarrow \Ccal_1$ together with two natural transformations
\begin{equation*}
	\eta: \id_{\Ccal_2} \rightarrow F_1 \circ F_2, \quad \epsilon: F_2 \circ F_1 \rightarrow \id_{\Ccal_1}
\end{equation*}
called the \textit{unit} and the \textit{co-unit} of the adjunction, respectively, such that the triangular identities are satisfied.

\begin{defn}\label{defn_adj-repr}
	Let $R_1: \Ccal_1 \rightarrow \Ecal_1$ and $R_2: \Ccal_2 \rightarrow \Ecal_2$ be two functors. Suppose that we are given two adjunctions
	\begin{equation*}
		F_1: \Ccal_1 \rightleftarrows \Ccal_2: F_2, \quad G_1: \Ecal_1 \rightleftarrows \Ecal_2: G_2
	\end{equation*}
	fitting into morphism of representations
	\begin{equation*}
		\alpha_1 = (F_1,G_1;\kappa_1): (\Ccal_1,\Ecal_1;R_1) \rightarrow (\Ccal_2,\Ecal_2;R_2), \quad \alpha_2 = (F_2,G_2;\kappa_2^{-1}): (\Ccal_2,\Ecal_2;R_2) \rightarrow (\Ccal_1,\Ecal_1;R_1).
	\end{equation*}
	We say that the pair $(\alpha_1,\alpha_2)$ is \textit{adjoint} if the unit transformations
	\begin{equation*}
		\eta: \id_{\Ccal_1} \rightarrow F_2 \circ F_1, \qquad \eta: \id_{\Ecal_1} \rightarrow G_2 \circ G_1
	\end{equation*}
	define a $2$-morphism of representations $(\eta,\eta): (\id_{\Ccal_1},\id_{\Ecal_1};\id) \rightarrow (F_2 \circ F_1, G_2 \circ G_1, \kappa_2^{-1} \star \kappa_1)$ and the co-unit transformations
	\begin{equation*}
		\epsilon: F_1 \circ F_2 \rightarrow \id_{\Ccal_2}, \qquad \epsilon: G_1 \circ G_2 \rightarrow \id_{\Ecal_2}
	\end{equation*}
	define a $2$-morphism of representations $(\epsilon,\epsilon): (F_1 \circ F_2, G_1 \circ G_2; \kappa_1 \star \kappa_2^{-1}) \rightarrow (\id_{\Ccal_2},\id_{\Ecal_2};\id)$.
\end{defn}

\begin{rem}
	More explicitly, two morphisms $\alpha_1$ and $\alpha_2$ as in \Cref{defn_adj-repr} are adjoint if and only if the diagram of functors $\Ccal_1 \rightarrow \Ecal_1$
	\begin{equation*}
		\begin{tikzcd}
			R_1 \arrow{r}{\eta} \arrow{d}{\eta} & R_1 \circ F_2 \circ F_1 \arrow{d}{\kappa_2} \\
			G_2 \circ G_1 \circ R_1 \arrow{r}{\kappa_1} & G_2 \circ R_2 \circ F_1
		\end{tikzcd}
	\end{equation*}
	and the diagram of functors $\Ccal_2 \rightarrow \Ecal_2$
	\begin{equation*}
		\begin{tikzcd}
			G_1 \circ R_1 \circ F_2 \arrow{d}{\kappa_2} \arrow{r}{\kappa_1} & R_2 \circ F_1 \circ F_2  \arrow{d}{\epsilon} \\
			G_1 \circ G_2 \circ R_2 \arrow{r}{\epsilon} & R_2
		\end{tikzcd}
	\end{equation*}
	are commutative. One can check that requiring the latter condition is equivalent to requiring that the natural isomorphism $\kappa_2$ equal the composite
	\begin{equation*}
		R_1 F_2 \xrightarrow{\eta} G_2 G_1 \circ R_1 F_2 \xrightarrow{\kappa_1} G_2 R_2 \circ F_1 F_2 \xrightarrow{\epsilon} G_2 R_2
	\end{equation*}
	or, equivalently, that the natural isomorphism $\kappa_1$ equal the composite
	\begin{equation*}
		G_1 R_1 \xrightarrow{\eta} G_1 R_1 \circ F_2 F_1 \xrightarrow{\kappa_2} G_1 G_2 \circ R_2 F_1 \xrightarrow{\epsilon} R_2 F_1.
	\end{equation*}
\end{rem}

The following result asserts that compatible adjunctions between abelian representations canonically induce adjunctions between the corresponding universal abelian factorizations:

\begin{cor}\label{cor_lift-adj}
	For $i = 1,2$ let $(\Dcal_i,\Acal_i;\beta_i)$ be an abelian representation.
	Suppose that we are given two exact morphisms of abelian representations
	\begin{equation*}
		\alpha_1 = (\phi_1,\Phi_1;\kappa_1): (\Dcal_1,\Acal_1;\beta_1) \rightarrow (\Dcal_2,\Acal_2;\beta_2), \quad \alpha_2 = (\phi_2,\Phi_2;\kappa_2^{-1}): (\Dcal_2,\Acal_2;\beta_2) \rightarrow (\Dcal_1,\Acal_1;\beta_1)
	\end{equation*}
	such that the pair $(\alpha_1,\alpha_2)$ is adjoint in the sense of \Cref{defn_adj-repr}. Then the exact functors $\bar{\phi}_1$ and $\bar{\phi}_2$ fit in a unique way into an adjunction
	\begin{equation}\label{adj-tildePhi}
		\overline{\phi_1}: \A(\beta_1) \rightleftarrows \A(\beta_2): \overline{\phi_2}
	\end{equation}
	such that the pairs $(\hat{\alpha}_1,\hat{\alpha}_2)$ and $(\tilde{\alpha}_1,\tilde{\alpha}_2)$ are adjoint as well.
\end{cor}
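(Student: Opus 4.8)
The plan is to produce the unit and co-unit of the desired adjunction $\overline{\phi_1} \dashv \overline{\phi_2}$ by lifting the given units and co-units via \Cref{prop_nat-lift}, and then to verify the triangular identities and the uniqueness by reducing everything to the level of the quivers $\Dcal_i$. First I would unwind the hypothesis that $(\alpha_1,\alpha_2)$ is adjoint: by \Cref{defn_adj-repr} it provides a $2$-morphism of abelian representations $(\eta,\eta)\colon (\id_{\Dcal_1},\id_{\Acal_1};\id) \to \alpha_2 \circ \alpha_1$ and a $2$-morphism $(\epsilon,\epsilon)\colon \alpha_1 \circ \alpha_2 \to (\id_{\Dcal_2},\id_{\Acal_2};\id)$. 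Applying \Cref{prop_nat-lift} to each of them yields lifted natural transformations $\overline{\eta}$ and $\overline{\epsilon}$. Using \Cref{rem_comp-lift}(2) (compatibility of the hat- and tilde-liftings with composition) together with the elementary facts that $\overline{\id_{\Dcal_i}} = \id_{\A(\beta_i)}$ and that the hat/tilde lift of an identity $1$-morphism is again an identity $1$-morphism, one identifies $\overline{\eta}$ with a natural transformation $\id_{\A(\beta_1)} \to \overline{\phi_2}\circ\overline{\phi_1}$ and $\overline{\epsilon}$ with a natural transformation $\overline{\phi_1}\circ\overline{\phi_2} \to \id_{\A(\beta_2)}$. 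Moreover, by the conclusion of \Cref{prop_nat-lift} the pairs $(\eta,\overline{\eta})$, $(\overline{\eta},\eta)$ and $(\epsilon,\overline{\epsilon})$, $(\overline{\epsilon},\epsilon)$ are themselves $2$-morphisms of abelian representations; comparing with \Cref{defn_adj-repr}, this is exactly the assertion that $(\hat{\alpha}_1,\hat{\alpha}_2)$ and $(\tilde{\alpha}_1,\tilde{\alpha}_2)$ are adjoint pairs \emph{provided} $(\overline{\eta},\overline{\epsilon})$ satisfies the triangular identities.

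Next I would check the two triangular identities for $(\overline{\eta},\overline{\epsilon})$, say $(\overline{\epsilon}\ast\overline{\phi_1}) \circ (\overline{\phi_1}\ast\overline{\eta}) = \id_{\overline{\phi_1}}$, where $\ast$ denotes whiskering (the second identity is symmetric). Since both sides are natural transformations between the exact functors $\overline{\phi_1}\colon \A(\beta_1)\to\A(\beta_2)$, and $\A(\beta_1)$ is generated under (co)kernels by the image of $\pi_1$, it suffices to check the equality after precomposition with $\pi_1$, i.e.\ as natural transformations of functors $\Dcal_1\to\A(\beta_2)$. Here one uses the defining equalities $\overline{\phi_i}\circ\pi = \pi\circ\phi_i$ coming from \eqref{equal:pi2-phi=barphi-pi1}, together with the characterization extracted from the proof of \Cref{prop_nat-lift}, namely that $\overline{\eta}\ast\pi_1 = \pi_1\ast\eta$ and $\overline{\epsilon}\ast\pi_2 = \pi_2\ast\epsilon$ under these identifications. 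A short diagram chase then shows that the restriction of $(\overline{\epsilon}\ast\overline{\phi_1}) \circ (\overline{\phi_1}\ast\overline{\eta})$ along $\pi_1$ has component at $D\in\Dcal_1$ equal to $\pi_2\bigl(\epsilon_{\phi_1(D)}\circ\phi_1(\eta_D)\bigr) = \pi_2(\id_{\phi_1(D)}) = \id$, using the triangular identity for the adjunction $\phi_1\dashv\phi_2$ on $\Dcal_1$; hence the full composite is $\id_{\overline{\phi_1}}$.

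Finally I would address uniqueness. If $(\overline{\eta}',\overline{\epsilon}')$ is another unit/co-unit pair exhibiting $\overline{\phi_1}\dashv\overline{\phi_2}$ such that $(\hat{\alpha}_1,\hat{\alpha}_2)$ is adjoint, then by \Cref{defn_adj-repr} the pair $(\eta,\overline{\eta}')$ is again a $2$-morphism from the identity $1$-morphism of $(\Dcal_1,\A(\beta_1);\pi_1)$ to $\hat{\alpha}_2\circ\hat{\alpha}_1$, which by \Cref{defn:2mor-repr} forces $\overline{\eta}'\ast\pi_1 = \pi_1\ast\eta = \overline{\eta}\ast\pi_1$; since natural transformations between exact functors on $\A(\beta_1)$ are determined by their restriction along $\pi_1$, this gives $\overline{\eta}'=\overline{\eta}$, and symmetrically $\overline{\epsilon}'=\overline{\epsilon}$.

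The step I expect to be the main obstacle is the second paragraph: one must be somewhat careful about how the lifting procedure of \Cref{prop_nat-lift} interacts with left and right whiskering by the lifted functors $\overline{\phi_i}$, since this compatibility is not isolated as a separate statement in the text and has to be read off from the uniqueness characterizations underlying \Cref{prop_lift-exfunct} and \Cref{prop_nat-lift}. Everything else is a formal consequence of the generation property of the categories $\A(\beta_i)$ together with the results already established.
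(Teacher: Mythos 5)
Your proposal is correct and follows essentially the same route as the paper: lift the unit and co-unit via \Cref{prop_nat-lift}, identify $\overline{\phi_2\circ\phi_1}=\overline{\phi_2}\circ\overline{\phi_1}$ via \Cref{rem_comp-lift}(2), and reduce the triangular identities to the level of $\Dcal_1$ using the fact that natural transformations between exact functors on $\A(\beta_i)$ are determined by their restriction along $\pi_i$. Your explicit restriction-along-$\pi_1$ computation for the whiskered composites, and your explicit uniqueness argument, merely spell out what the paper leaves implicit behind its appeal to the uniqueness part of \Cref{prop_nat-lift} and \Cref{rem_comp-nat-lift}(2).
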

\begin{proof}
	Applying \Cref{prop_nat-lift}, and taking into account \Cref{rem_comp-lift}(2), we obtain a natural transformation of functors $\A(\beta_1) \rightarrow \A(\beta_1)$
	\begin{equation*}
		\overline{\eta}: \id_{\A(\beta_1)} \rightarrow \overline{\phi_2 \circ \phi_1} = \overline{\phi_2} \circ \overline{\phi_1}
	\end{equation*}
	defining $2$-morphisms of abelian representations
	\begin{equation*}
		(\eta,\overline{\eta}): (\id_{\Dcal_1},\id_{\A(\Dcal_1)}; \id) \rightarrow (\phi_2 \circ \phi_1,\overline{\phi_2} \circ \overline{\phi_1}; \id), \qquad (\overline{\eta},\eta): (\id_{\Acal(\Dcal_1)},\id_{\Acal_1};\tilde{\kappa}_2 \star \tilde{\kappa}_1).
	\end{equation*}
	In the same way, we obtain a natural transformation of functors $\A(\beta_2) \rightarrow \A(\beta_2)$
	\begin{equation*}
		\overline{\epsilon}: \overline{\phi_1} \circ \overline{\phi_2} = \overline{\phi_1 \circ \phi_2} \rightarrow \id_{\A(\beta_2)}
	\end{equation*}
	defining $2$-morphisms of abelian representations
	\begin{equation*}
		(\epsilon,\overline{\epsilon}): (\phi_1 \circ \phi_2,\overline{\phi_1} \circ \overline{\phi_2}; \id), \qquad (\overline{\epsilon},\epsilon): (\overline{\phi_1} \circ \overline{\phi_2}, \Phi_1 \circ \Phi_2; \tilde{\kappa}_1 \star \tilde{\kappa}_2^{-1}).
	\end{equation*}
	As a consequence of the uniqueness part of \Cref{prop_nat-lift}, and taking into account \Cref{rem_comp-nat-lift}(2), we see that the composite natural transformation of functors $\A(\beta_1) \rightarrow \A(\beta_2)$
	\begin{equation*}
		\overline{\phi_1} \xrightarrow{\overline{\eta}} \overline{\phi_1} \circ \overline{\phi_2} \circ \overline{\phi_1} \xrightarrow{\overline{\epsilon}} \overline{\phi_1}
	\end{equation*}
	as well as the composite natural transformation of functors $\A(\beta_2) \rightarrow \A(\beta_1)$
	\begin{equation*}
		\overline{\phi_2} \xrightarrow{\overline{\eta}} \overline{\phi_2} \circ \overline{\phi_1} \circ \overline{\phi_2} \xrightarrow{\overline{\epsilon}} \overline{\phi_2}
	\end{equation*}
	are the identity natural transformations. Thus $\overline{\eta}$ and $\overline{\epsilon}$ satisfy the triangular identities, and therefore define the adjunction \eqref{adj-tildePhi}.
\end{proof}

In the same spirit, the previous results can be exploited to deduce equivalences between universal abelian factorizations. By definition, giving an equivalence between two categories $\Ccal_1$ and $\Ccal_2$ means giving two functors
\begin{equation*}
	F_1: \Ccal_1 \rightarrow \Ccal_2, \qquad F_2: \Ccal_2 \rightarrow \Ccal_1
\end{equation*}
and two natural isomorphisms
\begin{equation*}
	\eta: \id_{\Ccal_1} \xrightarrow{\sim} F_2 \circ F_1, \qquad \epsilon: F_1 \circ F_2 \xrightarrow{\sim} \id_{\Ccal_2}.
\end{equation*}
Using the Yoneda Lemma, one can see that this is exactly the same as giving an adjunction
\begin{equation*}
	F_1: \Ccal_1 \leftrightarrows \Ccal_2: F_2
\end{equation*}
where both $F_1$ and $F_2$ are fully faithful: indeed, the fully faithfulness of $F_1$ is equivalent to the invertibility of the unit, while the fully faithfulness of $F_2$ is equivalent to the invertibility of the co-unit. In the following result, it is convenient to view equivalences in this way.

\begin{cor}\label{cor_lift-equiv}
	Keep the notation and assumptions of \Cref{cor_lift-adj}, and suppose in addition that the adjunction
	\begin{equation*}
		\phi_1: \Dcal_1 \leftrightarrows \Dcal_2: \phi_2
	\end{equation*}
	is an equivalence. Then the adjunction
	\begin{equation*}
		\overline{\phi_1}: \A(\beta_1) \leftrightarrows \A(\beta_2): \overline{\phi_2}
	\end{equation*}
	is an equivalence as well.
\end{cor}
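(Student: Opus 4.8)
The plan is to reduce everything to the characterization of equivalences recalled just before the statement: an adjunction is an equivalence precisely when both its unit and its counit are natural isomorphisms. Since \Cref{cor_lift-adj} already produces the adjunction $\overline{\phi_1}: \A(\beta_1) \rightleftarrows \A(\beta_2): \overline{\phi_2}$, together with its unit $\overline{\eta}: \id_{\A(\beta_1)} \rightarrow \overline{\phi_2} \circ \overline{\phi_1}$ and its counit $\overline{\epsilon}: \overline{\phi_1} \circ \overline{\phi_2} \rightarrow \id_{\A(\beta_2)}$, it suffices to show that $\overline{\eta}$ and $\overline{\epsilon}$ are invertible.

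First I would recall, from the proof of \Cref{cor_lift-adj}, how $\overline{\eta}$ and $\overline{\epsilon}$ arise: $\overline{\eta}$ is the natural transformation furnished by \Cref{prop_nat-lift} applied to the $2$-morphism $(\eta,\eta)$ encoding the adjointness datum on the level of representations, and likewise $\overline{\epsilon}$ is the lift of $(\epsilon,\epsilon)$; here one also uses \Cref{rem_comp-lift}(2) to identify $\overline{\phi_2 \circ \phi_1} = \overline{\phi_2} \circ \overline{\phi_1}$ and $\overline{\phi_1 \circ \phi_2} = \overline{\phi_1} \circ \overline{\phi_2}$, along with the evident equalities $\overline{\id_{\Dcal_1}} = \id_{\A(\beta_1)}$ and $\overline{\id_{\Dcal_2}} = \id_{\A(\beta_2)}$ (immediate from the uniqueness in \Cref{prop_lift-exfunct}(1)). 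By \Cref{rem_comp-nat-lift}(1), the underlying natural transformations $\overline{\eta}$ and $\overline{\epsilon}$ depend only on $\eta$ and $\epsilon$, respectively.

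Next, by the extra hypothesis, the adjunction $\phi_1: \Dcal_1 \leftrightarrows \Dcal_2: \phi_2$ is an equivalence, so its unit $\eta: \id_{\Dcal_1} \rightarrow \phi_2 \circ \phi_1$ and its counit $\epsilon: \phi_1 \circ \phi_2 \rightarrow \id_{\Dcal_2}$ are both natural isomorphisms. Applying \Cref{rem_comp-nat-lift}(3) twice then gives that $\overline{\eta}$ and $\overline{\epsilon}$ are natural isomorphisms as well; by the characterization recalled above, $\overline{\phi_1}: \A(\beta_1) \leftrightarrows \A(\beta_2): \overline{\phi_2}$ is therefore an equivalence.

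I do not expect any serious obstacle here: invertibility of a lifted natural transformation is already isolated in \Cref{rem_comp-nat-lift}(3), so the only point requiring a moment's care is to confirm that the unit and counit of the adjunction $(\overline{\phi_1},\overline{\phi_2})$ produced by \Cref{cor_lift-adj} are genuinely the lifts of $\eta$ and $\epsilon$ — but this is exactly how they were constructed in the proof of that corollary, so the argument closes immediately.
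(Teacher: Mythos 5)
Your argument is correct and follows exactly the paper's own proof: both reduce to showing that the lifted unit $\overline{\eta}$ and counit $\overline{\epsilon}$ from \Cref{cor_lift-adj} are invertible, which follows from \Cref{rem_comp-nat-lift}(3) together with the invertibility of $\eta$ and $\epsilon$ given by the hypothesis that the original adjunction is an equivalence. No gaps.
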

\begin{proof}
	We have to show that the two natural transformations
	\begin{equation*}
		\overline{\eta}: \id_{\A(\beta_1)} \rightarrow \overline{\phi_2 \circ \phi_1} = \overline{\phi_2} \circ \overline{\phi_1}, \qquad \overline{\epsilon}: \overline{\phi_1} \circ \overline{\phi_2} = \overline{\phi_1 \circ \phi_2} \rightarrow \id_{\A(\beta_2)}
	\end{equation*}
	constructed in the proof of \Cref{cor_lift-adj} are invertible. Taking into account \Cref{rem_comp-nat-lift}(3), this follows from the hypothesis that the two natural transformations
	\begin{equation*}
		\eta: \id_{\Dcal_1} \rightarrow \phi_2 \circ \phi_1, \qquad \epsilon: \phi_1 \circ \phi_2 \rightarrow \id_{\Dcal_2}
	\end{equation*}
	are invertible.
\end{proof}

\subsection{Compatibility with diagram categories}\label{subsec:nat-dia}

We conclude by discussing the compatibility of \Cref{prop_nat-lift} with families of abelian representations indexed by some small category $\Lcal$; as a particular application, we show that natural exact sequences lift to natural exact sequences.

For notational convenience, we use the following definition:

\begin{defn}
	Let $\Lcal$ be a (small) category.
	\begin{enumerate}
		\item We call \textit{$\Lcal$-family of $1$-morphisms of representations} from $(\Ccal_1,\Ecal_1;R_1)$ to $(\Ccal_2,\Ecal_2;R_2)$ the datum of
		\begin{itemize}
			\item for every $l \in \Lcal$, a morphism of representations $\alpha_l = (F_l,G_l;\kappa_l): (\Ccal_1,\Ecal_1;R_1) \rightarrow (\Ccal_2,\Ecal_2;R_2)$
			\item for every arrow $\gamma: l \rightarrow l'$ in $\Lcal$, a $2$-morphism $(\lambda_{\gamma},\Lambda_{\gamma}): \alpha_{l} \rightarrow \alpha_{l'}$
		\end{itemize}
		such that the assignments $\gamma \mapsto \lambda_{\gamma}$ and $\gamma \mapsto \Lambda_{\gamma}$ are compatible with identities and composition in the obvious sense. 
		We write it as $\alpha: \left\{\alpha_l: (F_l,G_l;\kappa_l)\right\}_{l \in \Lcal}$, leaving the natural transformations $\lambda_{\gamma}$ and $\Lambda_{\gamma}$ implicit.
		\item In case the representations $(\Ccal_1,\Ecal_1;R_1)$ and $(\Ccal_2,\Ecal_2;R_2)$ are abelian, we say that an $\Lcal$-family of morphisms of representations $\alpha: (\Ccal_1,\Ecal_1;R_1) \rightarrow (\Ccal_2,\Ecal_2;R_2)$ is \textit{exact} of all the individual $\alpha_l: \Ecal_1 \rightarrow \Ecal_2$ for $l \in \Lcal$ are exact in the sense of \Cref{defn:1-mor_repr}(1)(b).
	\end{enumerate}
\end{defn}

The lifting result of \Cref{prop_lift-exfunct} is compatible with families of $1$-morphisms in the following sense:

\begin{lem}\label{cor:lift-Cfam}
	Let $(\Dcal_1,\Acal_1;\beta_1)$ and $(\Dcal_2,\Acal_2;\beta_2)$ be two abelian representations. Let $\Lcal$ be a (small) category, and suppose that we are given an $\Lcal$-family of exact morphisms of abelian representations
	\begin{equation*}
		\left\{\alpha_l = (\phi_l,\Phi_l;\kappa_l): (\Dcal_1,\Acal_1;\beta_1) \rightarrow (\Dcal_2,\Acal_2;\beta_2) \right\}_{l \in \Lcal}.
	\end{equation*}  
	Then the two collections of exact morphisms of abelian representations
	\begin{equation*}
		\left\{\hat{\alpha}_l = (\phi_l,\overline{\phi_l};\id): (\Dcal_1,\A(\beta_1);\pi_1) \rightarrow (\Dcal_2,\A(\beta_2);\pi_2) \right\}_{l \in \Lcal}
	\end{equation*}  
	and
	\begin{equation*}
		\left\{\tilde{\alpha}_l = (\overline{\phi_l},\Phi_l;\tilde{\kappa_l}): (\A(\beta_1),\Acal_1;\iota_1) \rightarrow (\A(\beta_2),\Acal_2;\iota_2) \right\}_{l \in \Lcal}
	\end{equation*}  
	canonically assemble into $\Lcal$-families of exact morphisms of abelian representations.
\end{lem}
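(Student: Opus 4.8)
The plan is to build the required $2$-morphism data by applying \Cref{prop_nat-lift} to each $2$-morphism occurring in the given $\Lcal$-family, and then to verify the two functoriality axioms (behaviour under identities and under composition) by invoking the uniqueness clauses already established.

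First I would observe that, for every arrow $\gamma\colon l\to l'$ in $\Lcal$, the pair $(\lambda_\gamma,\Lambda_\gamma)$ is by hypothesis a $2$-morphism $\alpha_l\to\alpha_{l'}$ of exact $1$-morphisms $(\Dcal_1,\Acal_1;\beta_1)\to(\Dcal_2,\Acal_2;\beta_2)$. Applying \Cref{prop_nat-lift} to it produces a unique natural transformation $\overline{\lambda_\gamma}\colon\overline{\phi_l}\to\overline{\phi_{l'}}$ for which $(\lambda_\gamma,\overline{\lambda_\gamma})$ is a $2$-morphism $\hat\alpha_l\to\hat\alpha_{l'}$ and $(\overline{\lambda_\gamma},\Lambda_\gamma)$ is a $2$-morphism $\tilde\alpha_l\to\tilde\alpha_{l'}$. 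I would take the assignments $\gamma\mapsto(\lambda_\gamma,\overline{\lambda_\gamma})$ and $\gamma\mapsto(\overline{\lambda_\gamma},\Lambda_\gamma)$ as the structural $2$-morphisms of the two candidate $\Lcal$-families $\{\hat\alpha_l\}_{l\in\Lcal}$ and $\{\tilde\alpha_l\}_{l\in\Lcal}$; by the uniqueness in \Cref{prop_nat-lift}, this is the only possible choice once one insists that the $\Dcal$-component of the lift over $\gamma$ be $\lambda_\gamma$ (in the case of $\hat\alpha$) or that the $\Acal$-component be $\Lambda_\gamma$ (in the case of $\tilde\alpha$), which is exactly what the two resulting notions of $\Lcal$-family demand.

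Next I would check compatibility with identities and composition. For an identity arrow $\id_l$, the hypothesis that $\{\alpha_l\}$ is an $\Lcal$-family gives $(\lambda_{\id_l},\Lambda_{\id_l})=(\id_{\phi_l},\id_{\Phi_l})$; since the pair $(\id_{\phi_l},\id_{\overline{\phi_l}})$ manifestly defines a $2$-morphism $\hat\alpha_l\to\hat\alpha_l$, the uniqueness part of \Cref{prop_nat-lift} forces $\overline{\lambda_{\id_l}}=\id_{\overline{\phi_l}}$. For the composite of arrows $\gamma\colon l\to l'$ and $\gamma'\colon l'\to l''$, the $\Lcal$-family structure of $\{\alpha_l\}$ gives $(\lambda_{\gamma'\circ\gamma},\Lambda_{\gamma'\circ\gamma})=(\lambda_{\gamma'}\circ\lambda_\gamma,\Lambda_{\gamma'}\circ\Lambda_\gamma)$, and \Cref{rem_comp-nat-lift}(2) yields $\overline{\lambda_{\gamma'\circ\gamma}}=\overline{\lambda_{\gamma'}}\circ\overline{\lambda_\gamma}$. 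Finally, exactness of each $\hat\alpha_l$ and each $\tilde\alpha_l$ is immediate from \Cref{prop_lift-exfunct}, so the two candidate families are genuinely $\Lcal$-families of exact morphisms of abelian representations.

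I do not expect a genuine obstacle here: the argument reduces entirely to the uniqueness assertions in \Cref{prop_nat-lift} together with the composition compatibility recorded in \Cref{rem_comp-nat-lift}(2), both of which are already at our disposal. The only point requiring care is the bookkeeping — tracking, for each arrow $\gamma$, which component of the lifted $2$-morphism is prescribed in advance and which is produced by the lifting procedure, and making sure these roles match the definition of an $\Lcal$-family of $1$-morphisms relative to $\hat\alpha$ on one side and to $\tilde\alpha$ on the other.
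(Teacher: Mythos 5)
Your proposal is correct and follows essentially the same route as the paper's proof: apply \Cref{prop_lift-exfunct} objectwise and \Cref{prop_nat-lift} arrowwise, with the structural $2$-morphisms pinned down by the uniqueness clauses. You are in fact slightly more explicit than the paper in verifying compatibility with identities and composition via \Cref{rem_comp-nat-lift}(2), which the paper leaves implicit.
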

\begin{proof}
	For every fixed object $C \in \Ccal$, we get by restriction a morphism of abelian representations $\alpha_C := (\phi_C,\Phi_C;\kappa_C): (\Dcal_1,\Acal_1;\beta_1) \rightarrow (\Dcal_2,\Acal_2;\beta_2)$.
	Applying \Cref{prop_lift-exfunct}, we obtain an exact functor
	\begin{equation*}
		\overline{\phi_C}: \A(\beta_1) \rightarrow \A(\beta_2),
	\end{equation*}  
	uniquely determined by the equality of additive functors $\Dcal_1 \rightarrow \A(\beta_2)$
	\begin{equation*}
		\pi_2 \circ \phi_C = \overline{\phi_C} \circ \pi_1,
	\end{equation*} 
	as well as a natural isomorphism of functors $\A(\beta_1) \rightarrow \Acal_2$
	\begin{equation*}
		\tilde{\kappa}_C: \Phi_C \circ \iota_1 \xrightarrow{\sim} \iota_2 \circ \overline{\phi}_C,
	\end{equation*}
	uniquely determined by the commutativity of the diagram of functors $\Dcal_1 \rightarrow \Acal$
	\begin{equation*}
		\begin{tikzcd}
			\Phi_C \circ \beta_1 \arrow[equal]{d} \arrow{rr}{\kappa_C} && \beta_2 \circ \phi_C \arrow[equal]{d} \\
			\Phi_C \circ \iota_1 \circ \pi_1 \arrow{r}{\tilde{\kappa}_C} & \iota \circ \overline{\phi_C} \circ \pi_1 \arrow[equal]{r} & \iota_2 \circ \pi_2 \circ \phi_C.
		\end{tikzcd}
	\end{equation*}
	Moreover, for every morphism $\gamma: C \rightarrow C'$ in $\Ccal$, the natural transformation of additive functors $\Dcal_1 \rightarrow \Dcal_2$
	\begin{equation*}
		\lambda_{\gamma}: \phi_C \rightarrow \phi_{C'}
	\end{equation*} 
	and the natural transformation of exact functors $\Acal_1 \rightarrow \Acal_2$
	\begin{equation*}
		\Lambda_{\gamma}: \Phi_C \rightarrow \Phi_{C'}
	\end{equation*}
	define a $2$-morphism $(\lambda_{\gamma},\Lambda_{\gamma}): \alpha_C \rightarrow \alpha_{C'}$. Applying \Cref{prop_nat-lift}, we obtain a natural transformation of exact functors $\A(\beta_1) \rightarrow \A(\beta_2)$
	\begin{equation*}
		\overline{\lambda_{\gamma}}: \overline{\phi_C} \rightarrow \overline{\phi_{C'}},
	\end{equation*}
	uniquely determined by the property that the pair $(\lambda_{\gamma},\overline{\lambda_{\gamma}})$ defines a $2$-morphism $\hat{\alpha}_C \rightarrow \hat{\alpha}_{C'}$ and the pair $(\overline{\lambda_{\gamma}},\Lambda_{\gamma})$ defines a $2$-morphism $\tilde{\alpha}_C \rightarrow \tilde{\alpha}_{C'}$. 
\end{proof}

As a particular case, one can consider $\Z$-indexed families of $1$-morphisms, for instance complexes of $1$-morphisms. We check that exact complexes of $1$-morphisms of representations give rise to exact complexes between universal abelian factorizations:

\begin{cor}\label{lem:lift-exseq}
	Let $(\Dcal_1,\Acal_1;\beta_1)$ and $(\Dcal_2,\Acal_2;\beta_2)$ be two abelian representations. Suppose that we are given, for every $n \in \Z$, an exact morphism of abelian representations
	\begin{equation*}
		\alpha_n = (\phi_n,\Phi_n;\kappa_n): (\Dcal_1,\Acal_1;\beta_1) \rightarrow (\Dcal_2,\Acal_2;\beta_2).
	\end{equation*} 
	Suppose in addition that we are given, for every $n \in \Z$, a $2$-morphism $(\lambda_n,\Lambda_n): \alpha_n \rightarrow \alpha_{n+1}$, such that the sequence of functors $\Acal_1 \rightarrow \Acal_2$
	\begin{equation*}
		\dots \xrightarrow{\Lambda_{n-2}} \Phi_{n-1} \xrightarrow{\Lambda_{n-1}} \Phi_n \xrightarrow{\Lambda_n} \Phi_{n+1} \xrightarrow{\Lambda_{n+1}} \dots
	\end{equation*}
	is exact. Then the sequence of functors $\A(\beta_1) \rightarrow \A(\beta_2)$
	\begin{equation*}
		\dots \xrightarrow{\overline{\lambda_{n-2}}} \overline{\phi_{n-1}} \xrightarrow{\overline{\lambda_{n-1}}} \overline{\phi_n} \xrightarrow{\overline{\lambda_n}} \overline{\phi_{n+1}} \xrightarrow{\overline{\lambda_{n+1}}} \dots
	\end{equation*}
	is exact as well.
\end{cor}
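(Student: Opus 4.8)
The plan is to transfer the exactness hypothesis from the $\Phi_n$ to the $\overline{\phi_n}$ along the faithful exact functors $\iota_1$ and $\iota_2$. First I would recall two standard facts: a sequence of functors into an abelian category is exact if and only if it is exact after evaluation at each object of the source; and a faithful exact functor $F$ between abelian categories reflects exactness of sequences (being exact, $F$ commutes with the formation of homology objects; being faithful, it sends a nonzero object to a nonzero one, so $F(H)=0$ forces $H=0$, and likewise a composite whose $F$-image vanishes already vanishes). Using the first fact on $\A(\beta_2)$ and the second for $\iota_2$, it then suffices to prove that the sequence of functors $\A(\beta_1)\to\Acal_2$
\[
\dots \xrightarrow{\iota_2\overline{\lambda_{n-1}}} \iota_2\circ\overline{\phi_n} \xrightarrow{\iota_2\overline{\lambda_n}} \iota_2\circ\overline{\phi_{n+1}} \xrightarrow{\iota_2\overline{\lambda_{n+1}}} \dots
\]
is exact.

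Then I would bring in the natural isomorphisms $\tilde{\kappa}_n: \Phi_n\circ\iota_1\xrightarrow{\sim}\iota_2\circ\overline{\phi_n}$ produced, for each $n\in\Z$, by \Cref{prop_lift-exfunct}(2). The point on which everything hinges is that these are compatible with the transition maps on both sides: \Cref{prop_nat-lift} (organized in \Cref{cor:lift-Cfam}) tells us that $(\overline{\lambda_n},\Lambda_n)$ is a $2$-morphism $\tilde{\alpha}_n\to\tilde{\alpha}_{n+1}$, and unwinding \Cref{defn:2mor-repr} this says exactly that the square with vertices $\Phi_n\circ\iota_1$, $\Phi_{n+1}\circ\iota_1$, $\iota_2\circ\overline{\phi_n}$, $\iota_2\circ\overline{\phi_{n+1}}$, horizontal maps $\Lambda_n$ and $\iota_2\overline{\lambda_n}$, and vertical maps $\tilde{\kappa}_n$ and $\tilde{\kappa}_{n+1}$, commutes. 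Hence $\{\tilde{\kappa}_n\}_{n\in\Z}$ is an isomorphism of $\Z$-indexed sequences of functors $\A(\beta_1)\to\Acal_2$ between the sequence above and
\[
\dots \xrightarrow{\Lambda_{n-1}} \Phi_n\circ\iota_1 \xrightarrow{\Lambda_n} \Phi_{n+1}\circ\iota_1 \xrightarrow{\Lambda_{n+1}} \dots.
\]

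Finally I would observe that the latter sequence is exact: the sequence of functors $\Phi_n: \Acal_1\to\Acal_2$ is exact by hypothesis, and precomposition with $\iota_1$ preserves exactness since exactness of a sequence of functors is objectwise. Transporting exactness along the isomorphism of sequences then shows that $\dots\to\iota_2\circ\overline{\phi_n}\to\iota_2\circ\overline{\phi_{n+1}}\to\dots$ is exact (in particular a complex), and the reduction performed in the first paragraph delivers the claimed exactness of $\dots\to\overline{\phi_{n-1}}\to\overline{\phi_n}\to\overline{\phi_{n+1}}\to\dots$. The only delicate point is the compatibility of the $\tilde{\kappa}_n$ with the maps $\overline{\lambda_n}$ and $\Lambda_n$ --- i.e.\ that they assemble into an isomorphism of sequences and not merely a family of objectwise isomorphisms --- but this is precisely the $2$-morphism assertion already recorded in \Cref{prop_nat-lift}, so no genuinely new work is required.
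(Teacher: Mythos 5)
Your proof is correct and follows the same route as the paper: reduce along the faithful exact functor $\iota_2$ (which reflects exactness), then transport the hypothesis through the natural isomorphisms $\tilde{\kappa}_n$, whose compatibility with $\overline{\lambda_n}$ and $\Lambda_n$ is exactly the $2$-morphism assertion of \Cref{prop_nat-lift}. You merely spell out the compatibility of the $\tilde{\kappa}_n$ with the transition maps, which the paper's proof leaves implicit in the phrase ``naturally isomorphic to the sequence''.
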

\begin{proof}
	Since the functor $\iota_2: \A(\beta_2) \hookrightarrow \Acal_2$ is exact and faithful, it suffices to show that the sequence of functors $\A(\beta_1) \rightarrow \Acal_2$
	\begin{equation*}
		\dots \xrightarrow{\overline{\lambda_{n-2}}} \iota_2 \circ \overline{\phi_{n-1}} \xrightarrow{\overline{\lambda_{n-1}}} \iota_2 \circ \overline{\phi_n} \xrightarrow{\overline{\lambda_n}} \iota_2 \circ \overline{\phi_{n+1}} \xrightarrow{\overline{\lambda_{n+1}}} \dots
	\end{equation*}
	is exact. But the latter is naturally isomorphic to the sequence of functors
	\begin{equation*}
		\dots \xrightarrow{\Lambda_{n-2}} \Phi_{n-1} \circ \iota_1 \xrightarrow{\Lambda_{n-1}} \Phi_n \circ \iota_1 \xrightarrow{\Lambda_n} \Phi_{n+1} \circ \iota_1 \xrightarrow{\Lambda_{n+1}} \dots,
	\end{equation*}
	which is exact as an immediate consequence of the hypothesis.  
\end{proof}

\section{Extension to the multi-linear setting}\label{sect:multi}

In this section, we explain how the universal property of universal abelian factorizations can be used to define multi-exact functors and natural transformations of such. The results below generalize those of \Cref{sect:lift-exfun} and \Cref{sect:lift-nat}. In order to prove them, we need to combine the main results of the previous sections, mostly via \Cref{cor:lift-Cfam}. 

The problem  of lifting multi-exact functors to the level of universal abelian representations in a systematic way was addressed in \cite{BVHP20}. The starting point is \cite[Prop.~1.4]{BVHP20}, which gives a canonical method to extend multi-linear functors from additive categories to their abelian hulls and shows that the resulting extension is right-exact in each variable. As pointed out in \cite[Ex.~1.12]{BVHP20}, the extension is not left-exact in each variable in general; however, as explained in \cite[Prop.~1.13]{BVHP20}, one can still use this method to construct multi-linear functors between universal abelian factorizations associated to certain representations, at least under suitable hypotheses.

Here we describe a slightly different construction that is tailored to our applications. The main difference with \cite{BVHP20} is that we bypass the level of abelian hulls and get directly a multi-exact functor on universal abelian factorizations. Note that our construction works only if the given multi-linear functor on the underlying abelian categories is multi-exact, while the result of \cite[Prop.~1.13]{BVHP20} works under weaker assumptions (see \cite[Defn.~1.9]{BVHP20}) and can be applied, for instance, to Nori motives with integral coefficients (see \cite[Thm.~2.20, Cor.~4.4]{BVHP20}). In all those cases where both methods apply, the multi-exact functors obtained via the two distinct approaches necessarily coincide, since both of them satisfy the same uniqueness property.

\subsection{Lifting multi-exact functors}

Given three categories $\Ccal, \Ccal_2$ and $\Ccal$, a functor $F: \Ccal_1 \times \Ccal_2 \rightarrow \Ccal$ can be regarded in two equivalent ways:
\begin{itemize}
	\item[–] as a $\Ccal_1$-family of functors $\left\{F(C_1,-): \Ccal_2 \rightarrow \Ccal\right\}_{C_1 \in \Ccal_1}$;
	\item[–] as a $\Ccal_2$-family of functors $\left\{F(-,C_2): \Ccal_1 \rightarrow \Ccal\right\}_{C_2 \in \Ccal_2}$.
\end{itemize}
This observation generalizes to functors from arbitrary finite products of categories. In the course of this section we systematically adopt this point of view, and we often switch the roles of the two variables.

\begin{defn}
	\begin{enumerate}
		\item Let $\Ccal_1, \dots, \Ccal_n$ and $\Ccal$ be additive categories. A functor
		\begin{equation*}
			F: \Ccal_1 \times \dots \times \Ccal_n \rightarrow \Ccal
		\end{equation*}
		is \textit{multi-additive} if it is additive separately with respect to each variable.
		\item Let $\Acal_1, \dots, \Acal_n$ and $\Acal$ be abelian categories. A multi-additive functor
		\begin{equation*}
			F: \Acal_1 \times \dots \times \Acal_n \rightarrow \Acal
		\end{equation*}
		is \textit{multi-exact} if it is exact separately with respect to each variable.
	\end{enumerate}
\end{defn}

\begin{defn}
	For each $i = 1,\dots,n$ let $(\Ccal_i,\Ecal_i;R_i)$ be an additive representation; let $(\Ccal,\Ecal;R)$ be a further additive representation. 
	\begin{enumerate}
		\item We call \textit{multi-linear $1$-morphism of representations} from $(\Ccal_1,\Ecal_1;R_1) \times \dots \times (\Ccal_n,\Ecal_n;R_n)$ to $(\Ccal,\Ecal;R)$ a triple $\alpha = (F,G;\kappa)$ consisting of two multi-additive functors
		\begin{equation*}
			F: \Ccal_1 \times \dots \times \Ccal_n \rightarrow \Ccal, \qquad G: \Ecal_1 \times \dots \times \Ecal_n \rightarrow \Ecal
		\end{equation*}
		and a natural isomorphism of functors $\Ccal_1 \times \dots \times \Ccal_n \rightarrow \Ecal$
		\begin{equation*}
			\kappa: G \circ (R_1,\dots,R_n) \xrightarrow{\sim} R \circ F.
		\end{equation*}
		\item In case the representations $(\Ccal_i,\Ecal_i;R_i)$, $i = 1,\dots,n$ and $(\Ccal,\Ecal;R)$ are all abelian, we say that a $1$-morphism
		\begin{equation*}
			\alpha = (F,G;\kappa): (\Ccal_1,\Ecal_1;R_1) \times \dots \times (\Ccal_n,\Ecal_n;R_n) \rightarrow (\Ccal,\Ecal;R)
		\end{equation*}  
		is \textit{multi-exact} if the multi-additive functor $G: \Ecal_1 \times \dots \times \Ecal_n \rightarrow \Ecal$ is multi-exact.
	\end{enumerate}
\end{defn}

Here is the multi-linear extension of \Cref{prop_lift-exfunct}:

\begin{prop}\label{prop:lift-multiex}
	For each $i = 1,\dots,n$ let $(\Dcal_i,\Acal_i;\beta_i)$ be an abelian representation, and let $(\Dcal,\Acal;\beta)$ be a further abelian representation. Suppose that we are given a multi-exact $1$-morphism of abelian representations $\alpha = (\phi,\Phi;\kappa): (\Dcal_1,\Acal_1;\beta_1) \times \dots \times (\Dcal_n,\Acal_n;\beta_n) \rightarrow (\Dcal,\Acal;\beta)$. Then:
	\begin{enumerate}
		\item There exists a unique multi-exact functor $\overline{\phi}: \A(\beta_1) \times \dots \times \A(\beta_n) \rightarrow \A(\beta)$ satisfying the equality of multi-additive functors $\Dcal_1 \times \dots \times \Dcal_n \rightarrow \A(\beta)$
		\begin{equation}\label{equal:pi-phi=barphi-pi1n}
			\pi \circ \phi = \overline{\phi} \circ (\pi_1,\dots,\pi_n).
		\end{equation}
		\item There exists a unique natural isomorphism of functors $\A(\beta_1) \times \dots \A(\beta_n) \rightarrow \Acal$
		\begin{equation*}
			\tilde{\kappa}: \Phi \circ (\iota_1,\dots,\iota_n) \xrightarrow{\sim} \iota \circ \overline{\phi}
		\end{equation*}
		making the diagram of functors $\Dcal_1 \times \dots \times \Dcal_n \rightarrow \Acal$
		\begin{equation}\label{dia-D1nA1n}
			\begin{tikzcd}
				\Phi \circ (\beta_1,\dots,\beta_n) \arrow[equal]{d} \arrow{rr}{\kappa} && \beta \circ \phi \arrow[equal]{d} \\
				\Phi \circ (\iota_1,\dots,\iota_n) \circ (\pi_1,\dots,\pi_n) \arrow[dashed]{r}{\tilde{\kappa}} & \iota \circ \overline{\phi} \circ (\pi_1,\dots,\pi_n) \arrow[equal]{r} & \iota \circ \pi \circ \phi
			\end{tikzcd}
		\end{equation}
		commute.
	\end{enumerate}
\end{prop}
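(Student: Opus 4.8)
Unlike in the additive case, a multi-additive functor $\phi\colon \Dcal_1\times\dots\times\Dcal_n\to\Dcal$ is \emph{not} additive on the product category $\Dcal_1\times\dots\times\Dcal_n$, so \Cref{prop_lift-exfunct} cannot be applied to it directly; instead, the plan is to lift one variable at a time, using the fact that a multi-linear $1$-morphism can be ``sliced''. The argument is an induction on $n$: the base case $n=1$ is exactly \Cref{prop_lift-exfunct}, and the inductive step consists in lifting one more variable while preserving all the structure, so that after $n$ steps one obtains $\overline{\phi}$ and $\tilde{\kappa}$.

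\textbf{Inductive step.} Suppose variables $1,\dots,i-1$ have already been lifted, so that we are given a multi-exact $1$-morphism whose $j$-th source representation is $(\A(\beta_j),\Acal_j;\iota_j)$ for $j<i$ and $(\Dcal_j,\Acal_j;\beta_j)$ for $j\geq i$, with target $(\A(\beta),\Acal;\iota)$ (by \Cref{lem:UAF-idem} the representations $(\A(\beta_j),\Acal_j;\iota_j)$ and $(\A(\beta),\Acal;\iota)$ are already ``their own'' universal abelian factorizations). Take $\Lcal$ to be the product of \emph{all the source categories except the $i$-th one}. Fixing the $i$-th variable free and letting the others range over $\Lcal$, the given multi-linear $1$-morphism packages into an $\Lcal$-family of exact $1$-morphisms $(\Dcal_i,\Acal_i;\beta_i)\to(\A(\beta),\Acal;\iota)$: to an object of $\Lcal$ one associates the corresponding restricted $1$-morphism, and to an arrow of $\Lcal$ the corresponding restricted $2$-morphism (compatibility with the natural isomorphisms $\kappa$ being precisely the naturality of $\kappa$ in the variables $j\neq i$); exactness in the $i$-th variable is inherited from the multi-exactness of the ambient $\Phi$. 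Applying \Cref{cor:lift-Cfam} to this family produces $\Lcal$-families $\{\hat{\alpha}_l\}_{l\in\Lcal}$ and $\{\tilde{\alpha}_l\}_{l\in\Lcal}$; reassembling the $1$-morphisms $\hat{\alpha}_l$ (resp. $\tilde{\alpha}_l$) over $\Lcal$ yields a functor, exact in the newly lifted $i$-th variable, satisfying the expected analogues of \eqref{equal:pi-phi=barphi-pi1n} and making the expected analogue of \eqref{dia-D1nA1n} commute, now with $\beta_i$ replaced by $\iota_i$. The only thing to check along the way is that multi-additivity in the remaining $\Dcal_j$'s, and functoriality together with additivity in the already-lifted $\A(\beta_j)$'s, are preserved: this follows from the uniqueness clause of \Cref{prop_lift-exfunct}, since multi-additivity of $\phi$ and $\Phi$ turns a biproduct in any such variable into a direct sum of $1$-morphisms, and the lift of a direct sum of $1$-morphisms is the direct sum of the lifts. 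Iterating this step for $i=1,\dots,n$ produces $\overline{\phi}\colon \A(\beta_1)\times\dots\times\A(\beta_n)\to\A(\beta)$ satisfying \eqref{equal:pi-phi=barphi-pi1n}, together with the natural isomorphism $\tilde{\kappa}$ obtained by concatenating the isomorphisms coming from the $\tilde{\alpha}$-families, and the commutativity of \eqref{dia-D1nA1n} is obtained by concatenating the commutative diagrams produced at each stage.

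\textbf{Uniqueness and the main obstacle.} Uniqueness of $\overline{\phi}$ and of $\tilde{\kappa}$ is argued exactly as in \Cref{prop_lift-exfunct}: each $\A(\beta_j)$ is generated under (co)kernels by the image of $\Dcal_j$, and $\overline{\phi}$ is exact in each variable, so it is determined on $\A(\beta_1)\times\dots\times\A(\beta_n)$ by its restriction along $(\pi_1,\dots,\pi_n)$, which is prescribed by \eqref{equal:pi-phi=barphi-pi1n}; likewise $\tilde{\kappa}$ is forced by the commutativity of \eqref{dia-D1nA1n}. The point requiring the most care is showing that the functor produced by the iteration really is \emph{multi-exact}: \Cref{prop_lift-exfunct} guarantees exactness in each variable \emph{at the moment that variable is lifted}, but a priori the subsequent liftings in the other variables could spoil it. The cleanest way around this is to not rely on the intermediate steps for multi-exactness at all, and instead establish it at the very end: by $\tilde{\kappa}$, the composite $\iota\circ\overline{\phi}$ is naturally isomorphic to $\Phi\circ(\iota_1,\dots,\iota_n)$, which is multi-exact because $\Phi$ is multi-exact and each $\iota_j$ is exact; since $\iota$ is exact and faithful, hence conservative, it reflects short exact sequences, and multi-exactness of $\overline{\phi}$ follows. (The same device of testing everything after composition with the conservative functor $\iota$ also streamlines the verification of \eqref{dia-D1nA1n} and of the identity \eqref{equal:pi-phi=barphi-pi1n}.)
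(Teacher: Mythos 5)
Your proposal is correct and follows essentially the same route as the paper: slice the multi-linear $1$-morphism into $\Lcal$-families, lift one variable at a time via \Cref{cor:lift-Cfam} (the paper writes out only $n=2$, switching the roles of the two variables, while you phrase the same iteration as an induction on $n$), and establish multi-exactness only at the end from the isomorphism $\iota\circ\overline{\phi}\cong\Phi\circ(\iota_1,\dots,\iota_n)$ together with the fact that the faithful exact functor $\iota$ reflects exactness. The uniqueness argument via generation under (co)kernels is also identical to the paper's.
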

\begin{proof}
	Since, for each $i = 1,\dots,n$, the abelian category $\A(\beta_i)$ is generated under (co)kernels by the image of $\Dcal_i$, a multi-exact functor $\bar{\phi}$ as in (1) is uniquely determined by the equality \eqref{equal:pi-phi=barphi-pi1n}, provided it exists. For the same reason, a natural isomorphism $\tilde{\kappa}$ as in (2) is uniquely determined by the commutativity of the diagram \eqref{dia-D1nA1n}, provided it exists.
	For simplicity, we only show the existence of $\overline{\phi}$ and $\tilde{\kappa}$ in the case $n = 2$; the general case is just a notationally more intricate variant. 
	
	In the case $n=2$, regard the bi-additive functor $\phi: \Dcal_1 \times \Dcal_2 \rightarrow \Dcal$ as a $\Dcal_2$-family of additive functors
	\begin{equation*}
		\left\{\phi_{D_2}: \Dcal_1 \rightarrow \Dcal, \; D_1 \rightsquigarrow \phi(D_1,D_2)\right\}_{D_2 \in \Dcal_2}.
	\end{equation*} 
	Similarly, regard the bi-exact functor $\Phi: \Acal_1 \times \Acal_2 \rightarrow \Acal$ as an $\Acal_2$-family of exact functors
	\begin{equation*}
		\left\{\Phi_{A_2}: \Acal_1 \rightarrow \Acal, \; A_1 \rightsquigarrow \Phi(A_1,A_2)\right\}_{A_2 \in \Acal_2},
	\end{equation*} 
	and, by restriction, as a $\Dcal_2$-family of exact functors
	\begin{equation*}
		\left\{\Phi_{\beta_2(D_2)}: \Acal_1 \rightarrow \Acal, \; A_1 \rightsquigarrow \Phi(A_1,\beta_2(D_2))\right\}_{D_2 \in \Dcal_2}.
	\end{equation*} 
	Together, they give rise to a $\Dcal_2$-family of exact $1$-morphisms of abelian representations 
	\begin{equation*}
		\left\{\alpha_{D_2} = (\phi_{D_2},\Phi_{\beta_2(D_2)};\kappa_{D_2}): (\Dcal_1,\Acal_1\beta_1) \rightarrow (\Dcal,\Acal;\beta) \right\}_{D_2 \in \Dcal_2}.
	\end{equation*}
	Applying \Cref{cor:lift-Cfam}, we obtain a $\Dcal_2$-family of exact functors
	\begin{equation}\label{D_2-fam:barphi}
		\left\{\bar{\phi}_{D_2}: \A(\beta_1) \rightarrow \A(\beta)\right\}_{D_2 \in \Dcal_2}
	\end{equation}
	giving rise to a $\Dcal_2$-family of exact morphisms of abelian representations 
	\begin{equation}\label{D_2-fam:tildealpha}
		\left\{\tilde{\alpha}_{D_2} = (\bar{\phi}_{D_2},\Phi_{\beta_2(D_2)}, \tilde{\kappa}_{D_2}): (\A(\beta_1),\Acal_1;\iota_1) \rightarrow (\A(\beta),\Acal;\iota) \right\}_{D_2 \in \Dcal_2}.
	\end{equation}
	We now switch the roles of the two variables, regarding \eqref{D_2-fam:barphi} as an $\A(\beta_1)$-family of additive functors 
	\begin{equation*}
		\left\{\bar{\phi}_{M_1}: \Dcal_2 \rightarrow \A(\beta), \; D_2 \rightsquigarrow \bar{\phi}_{D_2}(M_1) \right\}_{M_1 \in \A(\beta_1)}
	\end{equation*}
	and \eqref{D_2-fam:tildealpha} as an $\A(\beta_1)$-family of morphisms of abelian representations
	\begin{equation*}
		\left\{\tilde{\alpha}_{M_1}: (\bar{\phi}_{M_1},\Phi_{\iota_1(M_1)};\tilde{\kappa}_{M_1}): (\Dcal_2,\Acal_2;\beta_2) \rightarrow (\A(\beta),\Acal;\iota)\right\}_{M_1 \in \A(\beta_1)}.
	\end{equation*}
	Applying \Cref{cor:lift-Cfam} again, we obtain an $\A(\beta_1)$-family of exact functors
	\begin{equation}\label{M_1-fam:barphi}
		\left\{\bar{\phi}_{M_1}: \A(\beta_2) \rightarrow \A(\beta)\right\}_{M_1 \in \A(\beta_1)}
	\end{equation}
	giving rise to an $\A(\beta_1)$-family of exact morphisms of abelian representations (written in the same way, for notational convenience)
	\begin{equation}\label{M_1-fam:tildealpha}
		\left\{\tilde{\alpha}_{M_1}: (\bar{\phi}_{M_1},\Phi_{\iota_1(M_1)};\tilde{\kappa}_{M_1}): (\A(\beta_2),\Acal_2;\iota_2) \rightarrow (\A(\beta),\Acal;\iota)\right\}_{M_1 \in \A(\beta_1)}.
	\end{equation}
	We can finally regard \eqref{M_1-fam:barphi} as a bi-additive functor
	\begin{equation*}
		\overline{\phi}: \A(\beta_1) \times \A(\beta_2) \rightarrow \A(\beta)
	\end{equation*}  
	which is exact with respect to the second variable and, by construction, satisfies the equality \eqref{equal:pi-phi=barphi-pi1n}. The natural isomorphisms $\tilde{\kappa}_{M_1}$ in \eqref{M_1-fam:tildealpha} assemble into a natural isomorphism of functors $\A(\beta_1) \times \A(\beta_2) \rightarrow \Acal$
	\begin{equation*}
		\tilde{\kappa}: \Phi \circ (\iota_1,\iota_2) \xrightarrow{\sim} \iota \circ \overline{\phi}
	\end{equation*}
	which, by construction, makes the diagram \eqref{dia-D1nA1n} commutative (with respect to the bi-additive functor $\overline{\phi}$ just obtained). Since the underlying bi-additive functor $\Phi: \Acal_1 \times \Acal_2 \rightarrow \Acal$ is bi-exact, using the commutativity of the diagram \eqref{dia-D1nA1n} together with the fact that $\iota: \A(\beta) \rightarrow \Acal$ is faithful and exact we deduce that the functor $\overline{\phi}$ is bi-exact as well. This concludes the proof.
\end{proof}

\begin{rem}
	The analogue of \Cref{rem_comp-lift} applies, namely: 
	\begin{enumerate}
		\item Note that, while the existence of $\overline{\phi}$ follows from the existence of the exact functor $\Phi$ (and of the natural isomorphism $\kappa$), the actual expression of $\overline{\phi}$ only depends on $\phi$. 
		\item As a consequence of the uniqueness in the two statements of \Cref{prop_lift-exfunct}, we see that the assignments $\alpha \mapsto \hat{\alpha}$ and $\alpha \mapsto \tilde{\alpha}$ are both compatible with composition of multi-exact $1$-morphisms of representations in the expected way.
	\end{enumerate}
\end{rem}

It is convenient to extend the flexible variant of \Cref{prop_lift-exfunct} given by \Cref{cor:lifting-flexible} to the case of multi-linear representations.

\begin{cor}
	For each $i = 1,\dots,n$ let $(\Dcal_i,\Acal_i;\beta_i)$ be an abelian representation; let $(\Dcal,\Acal;\beta)$ be a further abelian representation. Suppose that we are given a multi-exact $1$-morphism of abelian representations
	\begin{equation*}
		\alpha:= (\phi,\Phi;\kappa): (\Dcal_1,\Acal_1;\beta_1) \times \dots \times (\Dcal_n,\Acal_n;\beta_n) \rightarrow (\Dcal,\Acal;\beta).
	\end{equation*}
	In addition, suppose that we are given
	\begin{itemize}
		\item a multi-exact functor $\psi: \A(\beta_1) \times \dots \times \A(\beta_n) \rightarrow \A(\beta)$,
		\item a natural isomorphism of functors $\Dcal_1 \times \dots \times \Dcal_n \rightarrow \A(\beta)$
		\begin{equation*}
			\chi: \psi \circ (\pi_1,\dots,\pi_n) \xrightarrow{\sim} \pi \circ \phi,
		\end{equation*}
		\item a natural isomorphism of functors $\A(\beta_1) \times \dots \times \A(\beta_n) \rightarrow \Acal$
		\begin{equation*}
			\xi: \Phi \circ (\iota_1,\dots,\iota_n) \xrightarrow{\sim} \iota \circ \psi
		\end{equation*}
	\end{itemize}
	satisfying the equality of $1$-morphisms between multi-linear representations 
	\begin{equation*}
		(\iota_1 \times \dots \times \iota_n,\iota;\xi) \circ (\pi_1 \times \dots \times \pi_n,\pi;\xi) = (\beta_1 \times \dots \times \beta_n,\beta;\kappa).
	\end{equation*}
	Then there exists a unique natural isomorphism of functors $\A(\beta_1) \times \dots \A(\beta_n) \rightarrow \A(\beta)$
	\begin{equation*}
		\tilde{\chi}: \overline{\phi} \xrightarrow{\sim} \psi
	\end{equation*}
	making the diagram of multi-linear representations $\A(\beta_1) \times \dots \times \A(\beta_n) \rightarrow \Acal$
	\begin{equation*}
		\begin{tikzcd}
			\iota \circ \psi \arrow{dr}{\xi} \arrow[dashed]{rr}{\tilde{\chi}} && \iota \circ \overline{\phi} \arrow{dl}{\tilde{\kappa}} \\
			& \Phi \circ (\iota_1 \times \dots \times \iota_n)
		\end{tikzcd}
	\end{equation*}
	commute.
\end{cor}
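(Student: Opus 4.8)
The plan is to transcribe the proof of \Cref{cor:lifting-flexible} into the multi-linear language, using \Cref{prop:lift-multiex} in place of \Cref{prop_lift-exfunct} and exploiting the idempotence of the construction (\Cref{lem:UAF-idem}). The first step is to notice that the triple $(\phi,\psi;\chi)$ is itself a multi-exact $1$-morphism of abelian representations
\[
	(\phi,\psi;\chi)\colon (\Dcal_1,\A(\beta_1);\pi_1)\times\dots\times(\Dcal_n,\A(\beta_n);\pi_n)\longrightarrow(\Dcal,\A(\beta);\pi),
\]
since $\psi$ is multi-exact by hypothesis and $\chi$ provides the required natural isomorphism. Applying \Cref{prop:lift-multiex} to this $1$-morphism yields a unique multi-exact functor $\A(\pi_1)\times\dots\times\A(\pi_n)\to\A(\pi)$ lifting $\phi$ through the representations $\pi_i$, together with a unique natural isomorphism fitting into the corresponding instance of the diagram \eqref{dia-D1nA1n}.

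Next I would invoke \Cref{lem:UAF-idem}: the canonical faithful exact functors $\A(\pi_i)\to\A(\beta_i)$ and $\A(\pi)\to\A(\beta)$ are in fact isomorphisms of categories, so we may identify $\A(\pi_i)=\A(\beta_i)$ and $\A(\pi)=\A(\beta)$, under which the composite $\Dcal_i\to\A(\pi_i)$ becomes $\pi_i$. With these identifications the multi-exact functor produced above becomes a multi-exact functor $\psi'\colon\A(\beta_1)\times\dots\times\A(\beta_n)\to\A(\beta)$ satisfying $\pi\circ\phi=\psi'\circ(\pi_1,\dots,\pi_n)$, which is precisely the defining property \eqref{equal:pi-phi=barphi-pi1n} of $\overline{\phi}$; by the uniqueness clause of \Cref{prop:lift-multiex}(1) we get $\psi'=\overline{\phi}$. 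Consequently the natural isomorphism furnished by \Cref{prop:lift-multiex}(2) is a natural isomorphism between $\psi$ and $\overline{\phi}$, characterised by the property that its restriction along $(\pi_1,\dots,\pi_n)$ recovers $\chi$; this is the sought-after $\tilde{\chi}$ (read in the direction demanded by the statement).

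It then remains to check that $\tilde{\chi}$, $\xi$ and the natural isomorphism $\tilde{\kappa}$ of \Cref{prop:lift-multiex}(2) make the triangle in the statement commute. All functors occurring there are multi-exact on $\A(\beta_1)\times\dots\times\A(\beta_n)$ and each $\A(\beta_i)$ is generated under (co)kernels by the image of $\pi_i$, so, by the one-variable-at-a-time argument already used in the proof of \Cref{prop:lift-multiex}, it suffices to prove commutativity after restriction along $(\pi_1,\dots,\pi_n)$. After this restriction the triangle becomes a diagram of functors $\Dcal_1\times\dots\times\Dcal_n\to\Acal$, which one displays as the outer boundary of a larger polygon --- exactly as in the final diagram chase of \Cref{cor:lifting-flexible} --- whose inner faces are: the characterising diagram of $\tilde{\chi}$ from the previous step (restricting $\tilde{\chi}$ along $(\pi_1,\dots,\pi_n)$ gives $\chi$); the hypothesis $(\iota_1\times\dots\times\iota_n,\iota;\xi)\circ(\pi_1\times\dots\times\pi_n,\pi;\chi)=(\beta_1\times\dots\times\beta_n,\beta;\kappa)$, which identifies the composite of $\xi$ with $\chi$ with $\kappa$; and the characterising diagram \eqref{dia-D1nA1n} of $\tilde{\kappa}$ (restricting $\tilde{\kappa}$ along $(\pi_1,\dots,\pi_n)$ gives $\kappa$). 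Each face commutes, hence so does the boundary, and the triangle follows.

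The formal backbone --- reduction to the image of the $\pi_i$, identification with $\overline{\phi}$ via idempotence, and the polygon chase --- is the verbatim multi-variable analogue of \Cref{cor:lifting-flexible}, so I expect the only genuinely delicate point to be bookkeeping: one must keep track of the directions of all the natural isomorphisms involved (note that the $\tilde{\kappa}$ of \Cref{prop:lift-multiex} points $\Phi\circ(\iota_1,\dots,\iota_n)\xrightarrow{\sim}\iota\circ\overline{\phi}$, so the corresponding edge of the triangle is its inverse) and one must use that the composition of multi-linear $1$-morphisms appearing in the hypothesis is the expected multi-variable version of the $\star$-composition of \Cref{nota:comp-1mor}. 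I do not anticipate any real obstacle once the single-variable case of \Cref{cor:lifting-flexible} is in hand.
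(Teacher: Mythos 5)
Your proposal is correct and follows exactly the route the paper intends: the paper's own proof of this corollary simply says it follows from \Cref{prop:lift-multiex} in the same way that \Cref{cor:lifting-flexible} follows from \Cref{prop_lift-exfunct}, leaving the details to the reader, and your write-up supplies precisely those details (the reinterpretation of $(\phi,\psi;\chi)$ as a multi-exact $1$-morphism into $(\Dcal,\A(\beta);\pi)$, the identification via \Cref{lem:UAF-idem}, and the polygon chase after restriction along $(\pi_1,\dots,\pi_n)$). Your remarks on the direction of $\tilde{\chi}$ and on reading the second $\xi$ in the hypothesis as $\chi$ correctly resolve what are evidently typos in the statement.
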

\begin{proof}
	This follows from \Cref{prop:lift-multiex} in the same way as \Cref{cor:lifting-flexible} follows from \Cref{prop_lift-exfunct}. We leave the details to the interested reader.
\end{proof}

\subsection{Lifting multi-linear natural transformations}

Here is the multi-linear extension of \Cref{prop_nat-lift}:

\begin{prop}\label{prop:lift-multinat}
	For each $i = 1,\dots,n$ let $(\Dcal_i,\Acal_i;\beta_i)$ be an abelian representation, and let $(\Dcal,\Acal;\beta)$ be a further abelian representation; in addition, let $\alpha = (\phi,\Phi;\kappa)$ and $\alpha' = (\phi',\Phi';\kappa')$ be two multi-exact $1$-morphisms of abelian representations $(\Dcal_1,\Acal_1;\beta_1) \times \dots \times (\Dcal_n,\Acal_n;\beta_n) \rightarrow (\Dcal,\Acal;\beta)$. 
	Suppose that we are given a $2$-morphism $(\lambda,\Lambda): \alpha \rightarrow \alpha'$. Then there exists a unique natural transformation of functors $\A(\beta_1) \times \dots \times \A(\beta_n) \rightarrow \A(\beta)$
	\begin{equation*}
		\overline{\lambda}: \overline{\phi}(M_1,\dots,M_n) \rightarrow \overline{\phi'}(M_1,\dots,M_n)
	\end{equation*}
	such that the pair $(\lambda,\overline{\lambda})$ defines a $2$-morphism $\hat{\alpha} \rightarrow \hat{\alpha'}$ and the pair $(\overline{\lambda},\Lambda)$ defines a $2$-morphism $\tilde{\alpha} \rightarrow \tilde{\alpha'}$.
\end{prop}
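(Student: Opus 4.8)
The plan is to mimic the proof of \Cref{prop_nat-lift}, replacing the appeal to \Cref{prop_lift-exfunct} by its multi-linear counterpart \Cref{prop:lift-multiex}; in this way the induction on the number of factors is entirely absorbed into the latter result, and no fresh inductive argument is needed here. For uniqueness I would argue as usual: since each $\A(\beta_i)$ is generated under (co)kernels by the image of $\Dcal_i$ and the functors $\overline{\phi},\overline{\phi'}\colon\A(\beta_1)\times\dots\times\A(\beta_n)\to\A(\beta)$ are multi-exact, a natural transformation $\overline{\lambda}$ for which $(\lambda,\overline{\lambda})$ is a $2$-morphism $\hat{\alpha}\to\hat{\alpha'}$ is completely determined by its restriction along $(\pi_1,\dots,\pi_n)$, hence is unique once it exists.

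For existence, let $\Ical$ be the category with objects $1,2$ and a single non-identity arrow $1\to 2$, and adopt the notation of \Cref{lem:Lcal} for the abelian representation $(\Dcal^{\Ical},\Acal^{\Ical};\beta^{\Ical})$. Following the multi-linear analogue of \Cref{rem:2mor-repr}, the data $(\phi,\phi',\lambda)$ and $(\Phi,\Phi',\Lambda)$ package into multi-additive functors $(\phi\xrightarrow{\lambda}\phi')\colon\Dcal_1\times\dots\times\Dcal_n\to\Dcal^{\Ical}$ and $(\Phi\xrightarrow{\Lambda}\Phi')\colon\Acal_1\times\dots\times\Acal_n\to\Acal^{\Ical}$, the second of which is multi-exact because a functor into $\Acal^{\Ical}$ is exact in a given variable exactly when both of its components are; and the condition that $(\lambda,\Lambda)$ be a $2$-morphism translates into the statement that $\kappa$ and $\kappa'$ assemble into a natural isomorphism $(\kappa,\kappa')\colon(\Phi\xrightarrow{\Lambda}\Phi')\circ(\beta_1,\dots,\beta_n)\xrightarrow{\sim}\beta^{\Ical}\circ(\phi\xrightarrow{\lambda}\phi')$. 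This produces a multi-exact $1$-morphism of abelian representations
\begin{equation*}
	\bigl((\phi\xrightarrow{\lambda}\phi'),\,(\Phi\xrightarrow{\Lambda}\Phi');\,(\kappa,\kappa')\bigr)\colon (\Dcal_1,\Acal_1;\beta_1)\times\dots\times(\Dcal_n,\Acal_n;\beta_n)\longrightarrow(\Dcal^{\Ical},\Acal^{\Ical};\beta^{\Ical}),
\end{equation*}
to which I would apply \Cref{prop:lift-multiex}. This gives a multi-exact functor $\overline{(\phi\xrightarrow{\lambda}\phi')}\colon\A(\beta_1)\times\dots\times\A(\beta_n)\to\A(\beta^{\Ical})$ with $\pi^{(\Ical)}\circ(\phi\xrightarrow{\lambda}\phi')=\overline{(\phi\xrightarrow{\lambda}\phi')}\circ(\pi_1,\dots,\pi_n)$, together with a natural isomorphism $\widetilde{(\kappa,\kappa')}\colon(\Phi\xrightarrow{\Lambda}\Phi')\circ(\iota_1,\dots,\iota_n)\xrightarrow{\sim}\iota^{(\Ical)}\circ\overline{(\phi\xrightarrow{\lambda}\phi')}$ characterised by a commuting diagram of the shape \eqref{dia-D1nA1n}. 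Using \Cref{lem:Lcal}(2) and the compatibility of the multi-linear lifting with composition (the analogue of \Cref{rem_comp-lift}), I would then observe that the composites of $\overline{(\phi\xrightarrow{\lambda}\phi')}$ with the evaluation functors $\overline{1^{*}_{\Dcal}},\overline{2^{*}_{\Dcal}}\colon\A(\beta^{\Ical})\to\A(\beta)$ are $\overline{\phi}$ and $\overline{\phi'}$ respectively, so that $\omega^{\beta}_{\Ical}\circ\overline{(\phi\xrightarrow{\lambda}\phi')}$ sends $(M_1,\dots,M_n)$ to $(\overline{\phi}(M_1,\dots,M_n),\overline{\phi'}(M_1,\dots,M_n);\overline{\lambda}(M_1,\dots,M_n))$; the last coordinate defines $\overline{\lambda}\colon\overline{\phi}\to\overline{\phi'}$. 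Restricting the equality for $\pi^{(\Ical)}$ along $(\pi_1,\dots,\pi_n)$ and pushing through $\omega^{\beta}_{\Ical}$ shows that $(\lambda,\overline{\lambda})$ is a $2$-morphism $\hat{\alpha}\to\hat{\alpha'}$; and the uniqueness clause in \Cref{prop:lift-multiex}(2) forces $\widetilde{(\kappa,\kappa')}=(\tilde{\kappa},\tilde{\kappa'})$, which by \Cref{rem:2mor-repr} is precisely the assertion that $(\overline{\lambda},\Lambda)$ is a $2$-morphism $\tilde{\alpha}\to\tilde{\alpha'}$. The compatibility of $(\lambda,\Lambda)\mapsto\overline{\lambda}$ with composition of $2$-morphisms, in the spirit of \Cref{rem_comp-nat-lift}, would follow from the same uniqueness considerations.

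Since \Cref{prop:lift-multiex} and \Cref{lem:Lcal} already do the real work, the argument is essentially formal, and the difficulty is bookkeeping rather than a genuine obstacle. The one place I would be careful is the identification $\overline{1^{*}_{\Dcal}}\circ\overline{(\phi\xrightarrow{\lambda}\phi')}=\overline{\phi}$ (and its analogue for the second evaluation): here one composes a multi-exact $1$-morphism with an ordinary exact $1$-morphism, so one must check that the compatibility of the lifting with composition survives in this mixed situation — but this is immediate from the uniqueness in \Cref{prop:lift-multiex}(1), since $\pi\circ 1^{*}_{\Dcal}\circ(\phi\xrightarrow{\lambda}\phi')=\pi\circ\phi=\overline{\phi}\circ(\pi_1,\dots,\pi_n)$ already pins the composite down. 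The only other point worth a line is the verification that the packaged functor $(\Phi\xrightarrow{\Lambda}\Phi')$ is genuinely multi-exact, which reduces variable by variable to the observation recalled in the proof of \Cref{thm_A(D)}(2)(ii) that a functor into $\Acal^{\Ical}$ is exact if and only if its two components are.
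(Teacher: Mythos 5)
Your proposal is correct and follows exactly the route the paper takes: uniqueness from the fact that each $\A(\beta_i)$ is generated under (co)kernels by the image of $\Dcal_i$, and existence by packaging $(\lambda,\Lambda)$ into a multi-exact $1$-morphism into $(\Dcal^{\Ical},\Acal^{\Ical};\beta^{\Ical})$, applying \Cref{prop:lift-multiex}, and repeating the argument of \Cref{prop_nat-lift}. The paper explicitly leaves these details to the reader, and your write-up (including the careful identification of the evaluation composites with $\overline{\phi}$ and $\overline{\phi'}$ via the uniqueness clause) supplies them correctly.
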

\begin{proof}
	Since, for each $i = 1,\dots,n$, the abelian category $\A(\beta_i)$ is generated under (co)kernels by the image of $\Dcal_i$, and since the functors $\overline{\phi}$ and $\overline{\phi'}$ are multi-exact by construction, a natural transformation $\overline{\lambda}$ as in the statement is uniquely determined by the property that the pair $(\lambda,\overline{\lambda})$ defines a $2$-morphism, provided it exists.
	In order to show the existence of $\overline{\lambda}$, it suffices to apply \Cref{prop:lift-multiex} to the induced multi-exact morphism of abelian representations
	\begin{equation*}
		((\phi \xrightarrow{\lambda} \phi'), (\Phi \xrightarrow{\Lambda} \Phi'); (\kappa,\kappa')): (\Dcal_1,\Acal_1;\beta_1) \times \dots \times (\Dcal_n,\Acal_n;\beta_n) \rightarrow (\Dcal^{\Ical},\Acal^{\Ical};\beta^{\Ical})
	\end{equation*}
	and repeat the argument in the proof of \Cref{prop_nat-lift}. We leave the details to the interested reader.
\end{proof}

\begin{rem}\label{rem:lift-multinat}
	The analogue of \Cref{rem_comp-nat-lift} applies, namely:
	\begin{enumerate}
		\item While the existence of the natural transformation $\overline{\lambda}: \overline{\phi} \rightarrow \overline{\phi'}$ follows from the existence of $\Lambda$ (and from its compatibility with $\lambda$), the actual expression of $\overline{\lambda}$ only depends on $\lambda$.
		\item As a consequence of the uniqueness in the statement of \Cref{prop:lift-multinat}, we see that the assignment $(\lambda,\Lambda) \mapsto \bar{\lambda}$ is compatible with composition of $2$-morphisms in the following sense: Given, for $i = 1,2,3$, a multi-exact morphism of abelian representations 
		\begin{equation*}
			\alpha_i = (\phi_i,\Phi_i;\theta_i): (\Dcal_1,\Acal_1;\beta_1) \times \dots \times (\Dcal_n,\Acal_n;\beta_n) \rightarrow (\Dcal,\Acal;\beta)
		\end{equation*} 
		and given, for $i = 1,2$, a $2$-morphism $(\lambda_i,\Lambda_i): \alpha_i \rightarrow \alpha_{i+1}$, we have the equality of natural transformations of functors $\A(\beta_1) \rightarrow \Acal(\beta_2)$
		\begin{equation*}
			\overline{\lambda_2 \circ \lambda_1} = \overline{\lambda_2} \circ \overline{\lambda_1}.
		\end{equation*}
		\item As a consequence of the previous two points, we deduce that the natural transformation $\overline{\lambda}: \overline{\phi} \rightarrow \overline{\phi'}$ is invertible as soon as $\lambda: \phi \rightarrow \phi'$ is invertible.
	\end{enumerate}
\end{rem}

\section{Lifting abelian fibered categories}\label{sect:lift-fib}

In this section, we apply the previous results to the setting of abelian fibered categories. In the first place, we show how the results of \Cref{sect:lift-exfun} and \Cref{sect:lift-nat} allow one to lift fibered categories to the level of universal abelian factorizations in a natural way. We then derive some useful applications to cofinality properties of universal abelian factorizations indexed by a cofiltered category.

\subsection{Main result}

Throughout this section, we let $\Scal$ be a fixed small category. The notion of fibered category over $\Scal$ that it is natural to consider here corresponds to the notion of "catégorie clivée normalisée" in the original reference \cite[Exp.~VI, Defn.~7.1]{SGA1}. In detail:

\begin{defn}\label{defn:Scal-fib}
	\begin{enumerate}
		\item An \textit{(additive) $\Scal$-fibered category} $\Acal$ is the datum of
		\begin{itemize}
			\item for every $S \in \Scal$, an additive category $\Acal(S)$,
			\item for every morphism $f: T \rightarrow S$ in $\Scal$, an additive functor
			\begin{equation*}
				f^*: \Acal(S) \rightarrow \Acal(T),
			\end{equation*}
			called the \textit{inverse image functor} along $f$,
			\item for every pair of composable morphisms $f: T \rightarrow S$ and $g: S \rightarrow V$ in $\Scal$, a natural isomorphism of functors $\Acal(V) \rightarrow \Acal(T)$
			\begin{equation*}
				\conn = \conn_{f,g}: (gf)^* A \xrightarrow{\sim} f^* g^* A
			\end{equation*}
			called the \textit{connection isomorphism} at $(f,g)$
		\end{itemize}
		such that the following conditions are satisfied:
		\begin{enumerate}
			\item[($\Scal$-fib-0)] For every $S \in \Scal$, we have $\id_S^* = \id_{\Hbb(S)}$.
			\item[($\Scal$-fib-1)] For every triple of composable morphisms $f: T \rightarrow S$, $g: S \rightarrow V$ and $h: V \rightarrow W$ in $\Scal$, the diagram of functors $\Acal(W) \rightarrow \Acal(T)$
			\begin{equation*}
				\begin{tikzcd}
					(hgf)^* A \arrow{rr}{\conn_{f,hg}} \arrow{d}{\conn_{gf,h}} && f^* (hg)^* \arrow{d}{\conn_{g,h}} A \\
					(gf)^* h^* A \arrow{rr}{\conn_{f,g}} && f^* g^* h^* A
				\end{tikzcd}
			\end{equation*}
			is commutative.
		\end{enumerate}
	\item An additive $\Scal$-fibered category $\Acal$ as above is called \textit{abelian} if all additive categories $\Acal(S)$ are abelian and all inverse image functors $f^*$ are exact.
	\end{enumerate}
\end{defn}

\begin{rem}
	One could avoid to impose axiom ($\Scal$-fib-0) in such a strict form. However, it is well-known that strict unitality of fibered categories can always be enforced; see for example \cite[Lemma~2.5]{DelVoe}.
\end{rem}

In order to explain how to lift abelian $\Scal$-fibered categories, we need to recall the notion of morphism between abelian $\Scal$-fibered categories in the following form:

\begin{defn}\label{defn:mor-Scal-fib}
	Let $\Ccal_1$ and $\Ccal_2$ be two (additive) $\Scal$-fibered categories.
	\begin{enumerate}
		\item A \textit{$1$-morphism of (additive) $\Scal$-fibered categories} $R: \Ccal_1 \rightarrow \Ccal_2$ is the datum of
		\begin{itemize}
			\item for every $S \in \Scal$, an additive functor $R_S: \Ccal_1(S) \rightarrow \Ccal_2(S)$,
			\item for every morphism $f: T \rightarrow S$ in $\Scal$, a natural isomorphism of functors $\Ccal_1(S) \rightarrow \Ccal_2(T)$
			\begin{equation*}
				\theta = \theta_f: f^* R_S(A) \xrightarrow{\sim} R_T (f^* A),
			\end{equation*}
			called the \textit{$R$-transition isomorphism} along $f$
		\end{itemize}
		such that the following condition is satisfied:
		\begin{enumerate}
			\item[(mor-$\Scal$-fib)] For every pair of composable morphisms $f: T \rightarrow S$ and $g: S \rightarrow V$ in $\Scal$, the diagram of functors $\Ccal_1(V) \rightarrow \Ccal_2(T)$
			\begin{equation*}
				\begin{tikzcd}
					(gf)^* R_V(A) \arrow{rr}{\theta_{gf}} \arrow{d}{\conn^{(2)}_{f,g}} && R_T ((gf)^* A) \arrow{d}{\conn^{(1)}_{f,g}} \\
					f^* g^* R_V(A) \arrow{r}{\theta_g} & f^* R_S (g^* A) \arrow{r}{\theta_f} & R_T (f^* g^* A)
				\end{tikzcd}
			\end{equation*}
			is commutative.
		\end{enumerate}
	
		In case the $\Scal$-fibered categories $\Ccal_1$ and $\Ccal_2$ are abelian, we say that a morphism of (additive) $\Scal$-fibered categories $R: \Ccal_1 \rightarrow \Ccal_2$ is \textit{exact} if all functors $R_S$ are exact.
		\item Let $R_1, R_2: \Ccal_1 \rightarrow \Ccal_2$ be two morphisms of (additive) $\Scal$-fibered categories. A \textit{$2$-morphism of (additive) $\Scal$-fibered categories} $\gamma: R_1 \rightarrow R_2$ is the datum of
		\begin{itemize}
			\item for every $S \in \Scal$, a natural transformation of functors $\Ccal_1(S) \rightarrow \Ccal_2(S)$
			\begin{equation*}
				\gamma_S: R_1(C) \rightarrow R_2(C)
			\end{equation*}
		\end{itemize}
	    satisfying the following condition:
	    \begin{enumerate}
	    	\item[($2$-mor-$\Scal$-fib)] For every arrow $f: T \rightarrow S$ in $\Scal$, the diagram of functors $\Ccal_1(S) \rightarrow \Ccal_2(T)$
	    	\begin{equation*}
	    		\begin{tikzcd}
	    			f^* R_{1,S}(C) \arrow{r}{\gamma_S} \arrow{d}{\theta_f^{(1)}} & f^* R_{2,S}(C) \arrow{d}{\theta_f^{(2)}} \\
	    			R_{1,T}(f^* C) \arrow{r}{\gamma_T} & R_{2,T}(f^* C)
	    		\end{tikzcd}
	    	\end{equation*}
    	    is commutative.
	    \end{enumerate}
        A $2$-morphism of $\Scal$-fibered categories $\gamma$ is called a \textit{$2$-isomorphism} if all natural transformations $\gamma_S$ are invertible.
	\end{enumerate}
\end{defn}

\begin{rem}
	There exists an obvious notion of composition between morphisms of $\Scal$-fibered categories. In detail, consider three (additive) $\Scal$-fibered categories $\Ccal_1$, $\Ccal_2$ and $\Ccal_3$ and two morphisms $R_1: \Ccal_1 \rightarrow \Ccal_2$ and $R_2: \Ccal_2 \rightarrow \Ccal_3$; for every morphism $f: T \rightarrow S$ in $\Scal$, let $\theta_f^{(1)}$ and $\theta_f^{(2)}$ denote the corresponding transition isomorphisms along $f$. We define the \textit{composite morphism} $R_2 \circ R_1: \Ccal_1 \rightarrow \Ccal_3$ by assigning
	\begin{itemize}
		\item to every $S \in \Scal$ the composite functor
		\begin{equation*}
			(R_2 \circ R_1)_S: \Ccal_1(S) \xrightarrow{R_{1,S}} \Ccal_2(S) \xrightarrow{R_{2,S}} \Ccal_3(S),
		\end{equation*}
	    \item to every arrow $f: T \rightarrow S$ in $\Scal$, the natural isomorphism of functors $\Ccal_1(S) \rightarrow \Ccal_3(T)$
	    \begin{equation*}
	    	\theta_f^{(3)}: f^* (R_2 \circ R_1)_S(C) := f^* R_{2,S} R_{1,S}(C) \xrightarrow{\theta_f^{(2)}} R_{2,T}(f^* R_{1,S}(C)) \xrightarrow{\theta_f^{(1)}} R_{2,T} R_{1,T}(f^* C) =: (R_2 \circ R_1)_T(f^* C).
	    \end{equation*} 
	\end{itemize} 
    It is easy to check that these data satisfy condition (mor-$\Scal$-fib) and so define indeed a morphism of (additive) $\Scal$-fibered categories. We leave the details to the interested reader.
\end{rem}

With this notions at our disposal, we can define $\Scal$-fibered representations as follows:

\begin{defn}\label{defn:Sfibrepr}
	\begin{enumerate}
		\item We call \textit{(additive) $\Scal$-fibered representation} a triple $(\Ccal,\Ecal;R)$ consisting of two (additive) $\Scal$-fibered categories $\Ccal, \Ecal$ and a morphism of (additive) $\Scal$-fibered categories $R: \Ccal \rightarrow \Ecal$. 
		\item We say that the $\Scal$-fibered representation $(\Ccal,\Ecal;R)$ is \textit{abelian} if the $\Scal$-fibered category $\Ecal$ is abelian in the sense of \Cref{defn:Scal-fib}.
	\end{enumerate}
\end{defn}

\begin{defn}
	For $i = 1,2$, let $(\Ccal_i,\Ecal_i;R_i)$ be an $\Scal$-fibered representation in the sense of \Cref{defn:Sfibrepr}.
	\begin{enumerate}
		\item We call \textit{morphism of $\Scal$-fibered representations} from $(\Ccal_1,\Ecal_1;R_1)$ to $(\Ccal_2,\Ecal_2;R_2)$ a triple $\alpha = (F,G;\kappa)$ consisting of two additive morphisms of $\Scal$-fibered categories
		\begin{equation*}
			F: \Ccal_1 \rightarrow \Ccal_2, \qquad G: \Ecal_1 \rightarrow \Ecal_2
		\end{equation*}
		and a $2$-isomorphism between morphisms of $\Scal$-fibered categories $\Ccal_1 \rightarrow \Ecal_2$
		\begin{equation*}
			\kappa: G \circ R_1 \xrightarrow{\sim} R_2 \circ F.
		\end{equation*}
	    \item In case the additive $\Scal$-fibered representations $(\Ccal_1,\Ecal_1;\kappa_1)$ and $(\Ccal_2,\Ecal_2;\kappa_2)$ are abelian, we say that a $1$-morphism $\alpha: (\Ccal_1,\Ecal_1;\kappa_1) \rightarrow (\Ccal_2,\Ecal_2;\kappa_2)$ is \textit{exact} if the additive morphism of $\Scal$-fibered categories $G: \Ecal_1 \rightarrow \Ecal_2$ is exact.
	\end{enumerate} 
\end{defn}

\begin{rem}\label{rem-morSfib-App} 
	Requiring the commutativity condition of axiom (mor-$\Scal$-fib) in \Cref{defn:mor-Scal-fib} is the same as asking that, for every pair of composable morphisms $f: T \rightarrow S$ and $g: S \rightarrow V$ in $\Scal$, the connection isomorphisms of $\Ccal$ and $\Ecal$ at $(f,g)$ define a $2$-morphism between $1$-morphisms of representations $(\Ccal(V),\Ecal(V);R_V) \rightarrow (\Ccal(T),\Ecal(T);R_T)$
	\begin{equation*}
		(\conn_{f,g},\conn_{f,g}): ((gf)^*,(gf)^*;\theta_{gf}) \rightarrow (f^*, f^*;\theta_f) \circ (g^*,g^*;\theta_g).
	\end{equation*}
\end{rem}

In the first place, we show that the universal abelian factorizations arising from an abelian $\Scal$-fibered representation canonically assemble into a similar abelian $\Scal$-fibered representation:

\begin{prop}\label{prop_lifting-fib}
	Let $(\Dcal,\Acal;\beta)$ be an abelian $\Scal$-fibered representation. Then:
	\begin{enumerate}
		\item Associating
		\begin{itemize}
			\item to every $S \in \Scal$, the abelian category $\A(\beta_S)$,
			\item to every morphism $f: T \rightarrow S$ in $\Scal$, the exact functor
			\begin{equation*}
				\overline{f^*}: \A(\beta_S) \rightarrow \A(\beta_T)
			\end{equation*}
			obtained via \Cref{prop_lift-exfunct}(1),
			\item to every pair of composable morphisms $f: T \rightarrow S$ and $g: S \rightarrow V$ in $\Scal$, the natural isomorphism of functors $\A(\beta_V) \rightarrow \A(\beta_T)$
			\begin{equation}\label{conn:lift}
				\overline{\conn} = \overline{\conn}_{f,g}: \overline{(gf)^*} \xrightarrow{\sim} \overline{f^*} \overline{g^*}
			\end{equation}
			obtained via \Cref{prop_nat-lift}
		\end{itemize} 
		defines an abelian $\Scal$-fibered category $\A(\beta)$. 
		\item Associating
		\begin{itemize}
			\item to every $S \in \Scal$, the faithful exact functor $\iota_S: \A(\beta_S) \hookrightarrow \Acal(S)$,
			\item to every morphism $f: T \rightarrow S$ in $\Scal$, the natural isomorphism of functors $\A(\beta_S) \rightarrow \Acal(T)$
			\begin{equation}\label{theta:lift}
				\tilde{\theta} = \tilde{\theta}_f: \overline{f^*} \iota_S(M) \xrightarrow{\sim} \iota_T(\overline{f^*} M)
			\end{equation}
			obtained via \Cref{prop_lift-exfunct}(2)
		\end{itemize}
		defines an exact morphism of abelian $\Scal$-fibered categories $\iota: \A(\beta) \hookrightarrow \Acal$.
	\end{enumerate}
\end{prop}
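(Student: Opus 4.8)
The plan is to assemble $\A(\beta)$ and the morphism $\iota$ purely from the lifting machinery of \Cref{sect:lift-exfun} and \Cref{sect:lift-nat}, using systematically the principle (already exploited in the proofs of \Cref{prop_lift-exfunct} and \Cref{prop_nat-lift}) that a diagram of natural transformations between exact functors with source $\A(\beta_S)$ commutes as soon as it commutes after restriction along the quotient $\pi_S\colon\Dcal(S)\to\A(\beta_S)$, because $\A(\beta_S)$ is generated under (co)kernels by the image of $\pi_S$.

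First, for each morphism $f\colon T\to S$ in $\Scal$ the triple $\alpha_f:=(f^*,f^*;\theta_f)$ is an exact $1$-morphism of abelian representations $(\Dcal(S),\Acal(S);\beta_S)\to(\Dcal(T),\Acal(T);\beta_T)$, since the transition isomorphisms of the morphism $\beta\colon\Dcal\to\Acal$ are exactly the natural isomorphisms $\theta_f\colon f^*\beta_S\xrightarrow{\sim}\beta_T f^*$ required in \Cref{defn:1-mor_repr}, and the $f^*$ on $\Acal$ are exact. Applying \Cref{prop_lift-exfunct} to $\alpha_f$ produces simultaneously the exact functor $\overline{f^*}\colon\A(\beta_S)\to\A(\beta_T)$ and the natural isomorphism $\tilde\theta_f\colon f^*\iota_S\xrightarrow{\sim}\iota_T\overline{f^*}$ of \eqref{theta:lift}. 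For composable $f\colon T\to S$ and $g\colon S\to V$, \Cref{rem-morSfib-App} tells us that the connection isomorphisms of $\Dcal$ and $\Acal$ define a $2$-morphism $(\conn_{f,g},\conn_{f,g})\colon\alpha_{gf}\to\alpha_f\circ\alpha_g$; applying \Cref{prop_nat-lift} to it, and using \Cref{rem_comp-lift}(2) to identify $\overline{f^*g^*}$ with $\overline{f^*}\,\overline{g^*}$, yields the natural transformation $\overline{\conn}_{f,g}\colon\overline{(gf)^*}\to\overline{f^*}\,\overline{g^*}$ of \eqref{conn:lift}, which is invertible by \Cref{rem_comp-nat-lift}(3) since $\conn_{f,g}$ is. Each $\A(\beta_S)$ is abelian and each $\overline{f^*}$ is exact, so it remains only to verify the two axioms of \Cref{defn:Scal-fib}(1) and the one axiom of \Cref{defn:mor-Scal-fib} for $\iota$.

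Axiom ($\Scal$-fib-0) is immediate: the underlying functor of $\alpha_{\id_S}$ on $\Dcal(S)$ is $\id_{\Dcal(S)}$ (as $\id_S^*=\id$ in $\Dcal$), and by \Cref{rem_comp-lift}(1) the lift depends only on this underlying functor, so $\overline{\id_S^*}=\id_{\A(\beta_S)}$ by the uniqueness in \Cref{prop_lift-exfunct}(1). For axiom ($\Scal$-fib-1), given $f\colon T\to S$, $g\colon S\to V$, $h\colon V\to W$, one must show that the square of natural transformations between exact functors $\A(\beta_W)\to\A(\beta_T)$ formed from $\overline{\conn}_{f,hg}$, $\overline{\conn}_{gf,h}$, $\overline{\conn}_{g,h}$ and $\overline{\conn}_{f,g}$ commutes; by the reduction principle it suffices to whisker with $\pi_W$. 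Here the $2$-morphism clause $(\conn_{f,g},\overline{\conn}_{f,g})\colon\hat\alpha_{gf}\to\widehat{\alpha_f\circ\alpha_g}$ from \Cref{prop_nat-lift} says that $\overline{\conn}_{f,g}$ whiskered by $\pi_W$ equals $\pi_T$ whiskered by $\conn_{f,g}$ (for $\Dcal$), and likewise for the three other transformations; associativity of whiskering then turns the restricted square into $\pi_T$ whiskered with the cocycle square for the connection isomorphisms of $\Dcal$, which commutes by axiom ($\Scal$-fib-1) for the fibered category $\Dcal$.

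Finally, the functors $\iota_S$ are faithful and exact by construction, so part~(2) reduces to checking axiom (mor-$\Scal$-fib) for $\iota$. But, unwinding the definitions and using $\widetilde{\alpha_f\circ\alpha_g}=\tilde\alpha_f\circ\tilde\alpha_g$ from \Cref{rem_comp-lift}(2), axiom (mor-$\Scal$-fib) for $\iota$ at a pair $(f,g)$ is word-for-word the assertion that $(\overline{\conn}_{f,g},\conn_{f,g})$ defines a $2$-morphism $\tilde\alpha_{gf}\to\tilde\alpha_f\circ\tilde\alpha_g$ — and this is part of the output of \Cref{prop_nat-lift} already used to construct $\overline{\conn}_{f,g}$ in part~(1). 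I expect no conceptual difficulty in any of these steps; the only real care needed is the bookkeeping of horizontal compositions (whiskerings) and of the canonical identifications $\overline{f^*g^*}=\overline{f^*}\,\overline{g^*}$, $\widetilde{\alpha_f\circ\alpha_g}=\tilde\alpha_f\circ\tilde\alpha_g$, and $\hat\alpha$ versus $\tilde\alpha$, all supplied by \Cref{rem_comp-lift} and \Cref{rem_comp-nat-lift}. In short, every commutativity to be checked reduces, through the generation of $\A(\beta_S)$ under (co)kernels, to the corresponding identity already known in $\Dcal$ or $\Acal$; this reduction is the heart of the argument.
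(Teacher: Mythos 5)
Your proposal is correct and follows essentially the same route as the paper's (much terser) proof: both reduce the axioms ($\Scal$-fib-0), ($\Scal$-fib-1) and (mor-$\Scal$-fib) to the corresponding axioms for $\Dcal$ and $\Acal$ via the uniqueness statements and the compatibility with composition recorded in \Cref{rem_comp-lift}(2) and \Cref{rem_comp-nat-lift}(2). Your observation that (mor-$\Scal$-fib) for $\iota$ is literally the assertion that $(\overline{\conn}_{f,g},\conn_{f,g})$ is a $2$-morphism $\tilde\alpha_{gf}\to\tilde\alpha_f\circ\tilde\alpha_g$ — already part of the output of \Cref{prop_nat-lift} — is exactly the content the paper leaves implicit.
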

\begin{proof}
	Recall that the constructions of \Cref{prop_lift-exfunct} and \Cref{prop_nat-lift} are compatible with composition by \Cref{rem_comp-lift}(2) and \Cref{rem_comp-nat-lift}(2), respectively. 
	As a consequence, the isomorphisms \eqref{conn:lift} necessarily satisfy conditions ($\Scal$-fib-0) and ($\Scal$-fib-1) of \Cref{defn:Scal-fib}, which implies (1). Similarly, the natural isomorphisms \eqref{theta:lift} satisfy condition ($m$-ETS) of \Cref{defn:ETS}, which implies (2). 
\end{proof}

\begin{rem}
	In fact, as $S$ varies in $\Scal$, the abelian representations $(\Dcal(S),\A(\beta_S);\pi_S)$ and $(\A(\beta_S),\Acal(S);\iota_S)$ assemble into abelian $\Scal$-fibered representations.
\end{rem}

Secondly, for sake of completeness, we also prove an enhanced version of \Cref{prop_lift-exfunct} in the $\Scal$-fibered setting:

\begin{prop}\label{prop:lift-morfib}
	For $i = 1,2$, let $(\Dcal_i,\Acal_i;\beta_i)$ be an abelian $\Scal$-fibered representation. Suppose that we are given an exact morphism of $\Scal$-fibered representations 
	\begin{equation*}
		(\phi,\Phi;\kappa): (\Dcal_1,\Acal_1;\beta_1) \rightarrow (\Dcal_2,\Acal_2;\beta_2).
	\end{equation*}
	Then, associating 
	\begin{itemize}
		\item to every $S \in \Scal$, the exact functor
		\begin{equation*}
			\overline{\phi_S}: \A(\beta_{1,S}) \rightarrow \A(\beta_{2,S})
		\end{equation*}
		obtained via \Cref{prop_lift-exfunct},
		\item to every morphism $f: T \rightarrow S$ in $\Scal$, the natural isomorphism of functors $\A(\beta_{1,S}) \rightarrow \A(\beta_{2,T})$
		\begin{equation}\label{tilde:kappa-fib}
			\tilde{\kappa} = \tilde{\kappa}_f: \overline{f^*} \overline{\phi_{1,S}}(M) \xrightarrow{\sim} \overline{\phi_{2,T}}(\overline{f^*} M)
		\end{equation}
		obtained via \Cref{prop_nat-lift}
	\end{itemize}
	defines an exact $1$-morphism of $\Scal$-fibered categories $\A(\beta_1) \rightarrow \A(\beta_2)$.
\end{prop}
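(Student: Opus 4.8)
The plan is to follow the pattern of the proof of \Cref{prop_lifting-fib}: everything will be a formal consequence of the fact that the lifting constructions of \Cref{prop_lift-exfunct} and \Cref{prop_nat-lift} are compatible with composition of $1$- and $2$-morphisms of representations (\Cref{rem_comp-lift}(2), \Cref{rem_comp-nat-lift}(2)). First I would fix a morphism $f: T\rightarrow S$ in $\Scal$ and explain how the transition isomorphism \eqref{tilde:kappa-fib} is produced. On the fibres over $S$ and over $T$ we have the exact $1$-morphisms of abelian representations $(\phi_S,\Phi_S;\kappa_S)$ and $(\phi_T,\Phi_T;\kappa_T)$, together with the exact $1$-morphisms $(f^*,f^*;\theta^{\beta_1}_f): (\Dcal_1(S),\Acal_1(S);\beta_{1,S})\rightarrow(\Dcal_1(T),\Acal_1(T);\beta_{1,T})$ and $(f^*,f^*;\theta^{\beta_2}_f): (\Dcal_2(S),\Acal_2(S);\beta_{2,S})\rightarrow(\Dcal_2(T),\Acal_2(T);\beta_{2,T})$ induced by the connection data of $\beta_1$ and $\beta_2$. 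Unwinding the hypothesis that $(\phi,\Phi;\kappa)$ is a morphism of $\Scal$-fibered representations (in the spirit of \Cref{rem-morSfib-App}), the $R$-transition isomorphisms $\theta^{\phi}_f$ of $\phi$ and $\theta^{\Phi}_f$ of $\Phi$ together form a $2$-isomorphism of abelian representations
\[
  (\theta^{\phi}_f,\theta^{\Phi}_f): (f^*,f^*;\theta^{\beta_2}_f)\circ(\phi_S,\Phi_S;\kappa_S)\longrightarrow(\phi_T,\Phi_T;\kappa_T)\circ(f^*,f^*;\theta^{\beta_1}_f).
\]
Applying \Cref{prop_nat-lift} to it, and using \Cref{rem_comp-lift}(2) to identify $\overline{f^*\circ\phi_S}=\overline{f^*}\circ\overline{\phi_S}$ and $\overline{\phi_T\circ f^*}=\overline{\phi_T}\circ\overline{f^*}$, yields the natural transformation $\tilde{\kappa}_f:=\overline{\theta^{\phi}_f}: \overline{f^*}\,\overline{\phi_S}\rightarrow\overline{\phi_T}\,\overline{f^*}$, which is invertible by \Cref{rem_comp-nat-lift}(3) since $\theta^{\phi}_f$ is.

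The second step is to verify that the collection $\{\overline{\phi_S}\}_S$ equipped with the transition isomorphisms $\{\tilde{\kappa}_f\}_f$ satisfies axiom (mor-$\Scal$-fib) of \Cref{defn:mor-Scal-fib}. For composable morphisms $f: T\rightarrow S$ and $g: S\rightarrow V$ in $\Scal$, the required identity is an equality of natural transformations between exact functors $\A(\beta_{1,V})\rightarrow\A(\beta_{2,T})$; since $\A(\beta_{1,V})$ is generated under (co)kernels by the image of $\pi_{1,V}: \Dcal_1(V)\rightarrow\A(\beta_{1,V})$ and every functor occurring in the diagram is exact, it suffices to check the identity after restriction along $\pi_{1,V}$. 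Under that restriction, the defining equalities $\pi\circ f^*=\overline{f^*}\circ\pi$ from \Cref{prop_lift-exfunct}(1), the defining properties of the lifted connection isomorphisms $\overline{\conn}^{(j)}_{f,g}$ from \Cref{prop_lifting-fib}, and the construction of $\tilde{\kappa}_f=\overline{\theta^{\phi}_f}$ reduce all the lifted data to the original connection and transition isomorphisms, so that the diagram becomes exactly the (mor-$\Scal$-fib) hexagon for the morphism of $\Scal$-fibered categories $\phi: \Dcal_1\rightarrow\Dcal_2$, which commutes by hypothesis. (Equivalently, and by the same argument used for ($\Scal$-fib-0) and ($\Scal$-fib-1) in the proof of \Cref{prop_lifting-fib}, the whole hexagon upstairs is obtained by applying the lifting procedure to the commutative hexagon downstairs, and hence commutes by the uniqueness clauses of \Cref{prop_lift-exfunct} and \Cref{prop_nat-lift}.) Exactness of the resulting $1$-morphism is then automatic, since each $\overline{\phi_S}$ is exact by \Cref{prop_lift-exfunct}(1).

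I do not expect any genuine obstacle: as in \Cref{prop_lifting-fib}, the statement is forced by the compatibility of the two lifting principles with composition, and carrying it out amounts to careful bookkeeping of natural isomorphisms. The one point deserving attention is the identification in the first step — one must check that $\tilde{\kappa}_f$ is the lift of the transition isomorphism $\theta^{\phi}_f$ itself, rather than of some composite of $\theta^{\phi}_f$ with the various $\kappa$'s and $\theta^{\beta}$'s, and that it restricts to $\theta^{\phi}_f$ along the quotient functors $\pi$; once this is pinned down, the coherence hexagon for $\phi$ transfers verbatim to $\{\overline{\phi_S}\}$.
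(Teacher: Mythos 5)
Your proposal is correct and follows essentially the same route as the paper: the paper's one-line proof likewise constructs $\tilde{\kappa}_f$ by applying \Cref{prop_nat-lift} to the $2$-morphism of representations furnished by the transition isomorphisms of $\phi$ and $\Phi$ (identifying the composites via \Cref{rem_comp-lift}(2)), and deduces condition (mor-$\Scal$-fib) from the compatibility of the lifting procedures with composition and their uniqueness clauses, exactly as in the proof of \Cref{prop_lifting-fib}. Your write-up merely makes explicit the details (in particular, that axiom ($2$-mor-$\Scal$-fib) for $\kappa$ is precisely the statement that $(\theta^{\phi}_f,\theta^{\Phi}_f)$ is a $2$-morphism between the two composite $1$-morphisms) that the paper leaves implicit.
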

\begin{proof}
	Arguing as in the proof of the previous result, we see that the isomorphisms \eqref{tilde:kappa-fib} necessarily satisfy condition (mor-$\Scal$-fib), which implies the thesis. 
\end{proof}

\begin{rem}
	In fact, as $S$ varies in $\Scal$, the $1$-morphisms of abelian representations $(\phi_S,\overline{\phi_S};\id)$ and $(\overline{\phi_S},\Phi_S;\tilde{\kappa}_S)$ assemble into $1$-morphisms of abelian $\Scal$-fibered representations.
\end{rem}

\subsection{Cofinality properties}

We now specialize the discussion of the previous subsection to the case where the base category is co-filtered; for sake of clarity, throughout this subsection we let $\Ucal$ denote a co-filtered small category. Our goal is to study filtered $2$-colimits of universal abelian factorizations arising from some abelian $\Ucal$-fibered category.

Recall that, for every (additive) $\Ucal$-fibered category $\Ccal$, it makes sense to define the filtered $2$-colimit
\begin{equation}\label{2-colim:Ccal}
	2-\varinjlim_{U \in \Ucal^{op}} \Ccal(U),
\end{equation}  
which is a well-behaved (additive) category whose objects and morphisms admit the usual explicit description in terms of equivalence classes of representatives. 
If $\Ccal$ is an abelian $\Ucal$-fibered category in the sense of \Cref{defn:Scal-fib}, the $2$-colimit \eqref{2-colim:Ccal} is an abelian category. 

\begin{lem}\label{lem_A-2colim}
	Let $\Dcal$ be an additive $\Ucal$-fibered category. Then the following statements hold:
	\begin{enumerate}
		\item There exists a canonical equivalence
		\begin{equation}\label{A:2-colim}
			2-\varinjlim_{U \in \Ucal^{op}} \A(\Dcal(U)) \xrightarrow{\sim} \A(2-\varinjlim_{U \in \Ucal^{op}} \Dcal(U))
		\end{equation}
	    making the diagram
	    \begin{equation*}
	    	\begin{tikzcd}
	    		& 2-\varinjlim_{U \in \Ucal^{op}} \Dcal(U) \arrow{dl} \arrow{dr} \\
	    		2-\varinjlim_{U \in \Ucal^{op}} \A(\Dcal(U)) \arrow[dashed]{rr}{\sim} && \A(2-\varinjlim_{U \in \Ucal^{op}} \Dcal(U))
	    	\end{tikzcd}
	    \end{equation*}
        commute.
        \item More generally, for every abelian $\Ucal$-fibered representation $(\Dcal,\Acal;\beta)$, there exists a canonical equivalence
        \begin{equation}\label{A(beta):2-colim}
        	2-\varinjlim_{U \in \Ucal^{op}} \A(\beta_U) \xrightarrow{\sim} \A(2-\varinjlim_{U \in \Ucal^{op}} \beta_U) 
        \end{equation}
        making the whole diagram
        \begin{equation*}
        	\begin{tikzcd}
        		& 2-\varinjlim_{U \in \Ucal^{op}} \Dcal(U) \arrow{dl} \arrow{dr} \\
        		2-\varinjlim_{U \in \Ucal^{op}} \A(\beta_U) \arrow[dashed]{rr}{\sim} \arrow{dr} && \A(2-\varinjlim_{U \in \Ucal^{op}} \beta_U) \arrow{dl} \\
        		& 2-\varinjlim_{U \in \Ucal^{op}} \Acal(U)
        	\end{tikzcd}
        \end{equation*}
        commute.
	\end{enumerate}
\end{lem}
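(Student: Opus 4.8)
The plan is to establish part (1) first and then deduce part (2) from it. Throughout, write $\Dcal_\infty := 2-\varinjlim_{U \in \Ucal^{op}} \Dcal(U)$, and likewise $\Acal_\infty := 2-\varinjlim_{U \in \Ucal^{op}} \Acal(U)$ and $\beta_\infty := 2-\varinjlim_{U \in \Ucal^{op}} \beta_U \colon \Dcal_\infty \to \Acal_\infty$. Recall that a filtered $2$-colimit of abelian categories along exact functors is again abelian, with exact structure functors; in particular, in the situation of (2) the category $\Acal_\infty$ is abelian and $\beta_\infty$ is additive, so that $\A(\beta_\infty)$ is defined. By \Cref{thm_A(D)}(1), \Cref{cor:A(D)}, and the compatibility of the lifting constructions with composition (as exploited in \Cref{prop_lifting-fib}), the assignment $U \mapsto \A(\Dcal(U))$ underlies an abelian $\Ucal$-fibered category in the sense of \Cref{defn:Scal-fib}, whose inverse image functors are the exact functors $\overline{f^*}$ and whose connection isomorphisms are the lifted $\overline{\conn}_{f,g}$; hence $\Ecal_\infty := 2-\varinjlim_{U \in \Ucal^{op}} \A(\Dcal(U))$ is an abelian category. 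Since the embeddings $\Dcal(U) \hookrightarrow \A(\Dcal(U))$ are compatible with the transition functors on both sides, they induce in the $2$-colimit a canonical additive functor $\Dcal_\infty \to \Ecal_\infty$.

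For part (1), I would verify that $\Ecal_\infty$, equipped with the functor $\Dcal_\infty \to \Ecal_\infty$ just described, satisfies the universal property of \Cref{thm_A(D)}(2), which characterizes $\A(\Dcal_\infty)$ uniquely up to canonical equivalence. Fix an abelian category $\Acal$. Because the structure functors $\A(\Dcal(U)) \to \Ecal_\infty$ are exact, an exact functor $\Ecal_\infty \to \Acal$ is the same datum as a compatible system of exact functors $\A(\Dcal(U)) \to \Acal$; applying \Cref{thm_A(D)}(2) at each stage $U$, this is in turn the same as a compatible system of additive functors $\Dcal(U) \to \Acal$, i.e.\ an additive functor $\Dcal_\infty \to \Acal$; and the same reasoning on the level of natural transformations (again by \Cref{thm_A(D)}(2)) upgrades this to an equivalence between the category of exact functors $\Ecal_\infty \to \Acal$ and the category of additive functors $\Dcal_\infty \to \Acal$, naturally in $\Acal$ and compatibly with restriction along $\Dcal_\infty \to \Ecal_\infty$. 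This is exactly the universal property of $\A(\Dcal_\infty)$, so we obtain the canonical equivalence \eqref{A:2-colim}; the commutativity of the triangle in (1) is built into the construction.

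For part (2), identify $\A(\Dcal_\infty) \simeq \Ecal_\infty = 2-\varinjlim_{U} \A(\Dcal(U))$ by part (1). Under this identification the exact extension $\beta_\infty^+ \colon \A(\Dcal_\infty) \to \Acal_\infty$ is identified with $2-\varinjlim_{U} \beta_U^+$: the latter is exact (a filtered colimit of exact functors) and restricts to $\beta_\infty$ along $\Dcal_\infty \to \A(\Dcal_\infty)$, hence equals $\beta_\infty^+$ by the uniqueness in \Cref{thm_A(D)}(2). Moreover, an object of a filtered $2$-colimit of abelian categories vanishes if and only if it already vanishes at some finite stage, so $\ker(2-\varinjlim_U \beta_U^+) = 2-\varinjlim_U \ker(\beta_U^+)$ as Serre subcategories of $2-\varinjlim_U \A(\Dcal(U))$ (the subcategories $\ker(\beta_U^+)$ do form a compatible family, since $\beta^+$ underlies a morphism of $\Ucal$-fibered categories). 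Finally, a filtered $2$-colimit of Serre quotients is canonically the Serre quotient of the $2$-colimit by the $2$-colimit of the Serre subcategories; granting this, we get
\[
\A(\beta_\infty) = \A(\Dcal_\infty)/\ker(\beta_\infty^+) \simeq 2-\varinjlim_{U}\bigl(\A(\Dcal(U))/\ker(\beta_U^+)\bigr) = 2-\varinjlim_{U} \A(\beta_U),
\]
which is the equivalence \eqref{A(beta):2-colim}. Tracing the quotient functors $\pi_U$ and the faithful exact functors $\iota_U$ through these identifications shows that the equivalence is compatible with the functors from $\Dcal_\infty$ and to $\Acal_\infty$, giving the commutativity of the outer diagram in (2); note that $U \mapsto \A(\beta_U)$ is an abelian $\Ucal$-fibered category by \Cref{prop_lifting-fib}, so its $2$-colimit is indeed abelian.

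The formal part of the argument is routine manipulation of $2$-colimits and universal properties; the one genuinely non-formal input is the claim invoked at the end of part (2), namely that forming Serre quotients commutes with filtered $2$-colimits of abelian categories. This is standard — it follows by unwinding the calculus of left and right fractions that computes morphisms in a Serre quotient and checking that each ingredient (passing to a subobject with quotient in the Serre subcategory, passing to a quotient by a subobject in the Serre subcategory, and the ensuing filtered colimit of $\Hom$-groups) is preserved by filtered colimits — but it is the step I would expect to require the most care to write out in full. A secondary source of bookkeeping is that all the indexed families above are pseudofunctors over $\Ucal^{op}$ rather than strict diagrams, with the coherence data supplied by the connection isomorphisms and their lifts $\overline{\conn}$; this must be carried along but causes no real difficulty.
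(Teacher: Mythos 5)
Your argument is correct, but for part (2) it takes a genuinely different route from the paper. For part (1) the two proofs essentially coincide: the paper builds the functor $2\text{-}\varinjlim_U \A(\Dcal(U)) \to \A(2\text{-}\varinjlim_U \Dcal(U))$ from the system of exact functors supplied by \Cref{thm_A(D)}(2)(i) and obtains a quasi-inverse from the universal property of the right-hand side, which is just a repackaging of your verification that the $2$-colimit satisfies the universal property of the abelian hull. For part (2), however, the paper stays entirely within the universal-property formalism: it produces the functor $2\text{-}\varinjlim_U \A(\beta_U) \to \A(2\text{-}\varinjlim_U \beta_U)$ by applying \Cref{prop_lift-exfunct}(1) levelwise and passing to the colimit, and the quasi-inverse by applying \Cref{prop:A(beta)} to the factorization $\Dcal_\infty \to 2\text{-}\varinjlim_U \A(\beta_U) \to \Acal_\infty$ (the second functor being faithful and exact because each $\iota_U$ is and because vanishing in a filtered $2$-colimit is detected at a finite stage). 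You instead unwind \Cref{defn:A(beta)} and prove the statement by commuting Serre quotients with filtered $2$-colimits, after identifying $\beta_\infty^+$ with $2\text{-}\varinjlim_U \beta_U^+$ and the kernel of the colimit with the colimit of the kernels. Your route is more explicit about what $\A(\beta_\infty)$ is as a quotient, but it imports an auxiliary general fact (Serre quotients commute with filtered $2$-colimits) that, while standard, requires the fraction-calculus verification you flag; the paper's route avoids this entirely by leaning on the already-established universal characterization of $\A(\beta)$, at the cost of leaving the check that the two canonical functors are mutually quasi-inverse implicit. Both arguments are sound, and the intermediate identifications you establish along the way (e.g.\ $\ker(2\text{-}\varinjlim_U \beta_U^+) = 2\text{-}\varinjlim_U \ker(\beta_U^+)$ and the faithfulness of the colimit of the $\iota_U$) are exactly the facts the paper's terser proof relies on tacitly.
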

\begin{proof}
	\begin{enumerate}
		\item The functor \eqref{A:2-colim} is induced by the $\Ucal^{op}$-indexed system of exact functors $\A(\Dcal(U)) \rightarrow \A(2-\varinjlim_{U \in \Ucal^{op}} \Dcal(U))$ obtained by applying \Cref{thm_A(D)}(2)(i) to the obvious $1$-morphisms of abelian representations
		\begin{equation*}
			(\Dcal(U),\Acal(U);\beta_U) \rightarrow (2-\varinjlim_{U \in \Ucal^{op}} \Dcal(U), 2-\varinjlim_{U \in \Ucal^{op}} \Acal(U); 2-\varinjlim_{U \in \Ucal^{op}} \beta_U), \qquad U \in \Ucal.
		\end{equation*} 
	    A canonical quasi-inverse to it is determined by the universal property of $\A(2-\varinjlim_{U \in \Ucal^{op}} \Dcal(U))$ via \Cref{thm_A(D)}(2)(i).
	    \item As for the previous point, the functor \eqref{A(beta):2-colim} is induced by applying \Cref{prop_lift-exfunct}(1) for each $U \in \Ucal$ and passing to the $2$-colimit, while a canonical quasi-inverse is determined by the universal property of $\A(2-\varinjlim_{U \in \Ucal^{op}} \beta_U)$ via \Cref{prop_lift-exfunct}(1).
	\end{enumerate}
\end{proof}

We now derive an consequence of the previous result. In order to formulate it, we need a suitable notion of denseness for (additive) $\Ucal$-fibered categories:

\begin{defn}\label{defn:w-cof}
	Let $\Ccal$ be an (additive) $\Ucal$-fibered category, and let $\Ccal^0$ be an (additive) $\Ucal$-fibered subcategory of $\Ccal$. We say that $\Ccal^0$ is \textit{weakly dense} in $\Ccal$ if the canonical functor
	\begin{equation*}
		2-\varinjlim_{U \in \Ucal^{op}} \Ccal^0(U) \rightarrow 2-\varinjlim_{U \in \Ucal^{op}} \Ccal(U) 
	\end{equation*}
    is an equivalence.
\end{defn}

\begin{rem}
	Concretely, saying that $\Ccal^0$ is weakly dense in $\Ccal$ amounts to requiring the following two conditions:
	\begin{enumerate}
		\item For every $U \in \Ucal$ and every object $C \in \Ccal(U)$, there exists an arrow $v: V \rightarrow U$ in $\Ucal$ (depending on $C$) such that the object $v^* C \in \Ccal(V)$ lies in the essential image of $\Ccal^0(V)$.
		\item For every $V \in \Ucal$, every pair of objects $C, C' \in \Ccal(U)$, and every morphism $c: C \rightarrow C'$ in $\Ccal(V)$, there exists an arrow $w: W \rightarrow V$ in $\Ucal$ (depending on $c$) such that the morphism $w^* c: w^* C \rightarrow w^* C'$ lies in $\Ccal^0(W)$.
	\end{enumerate}
\end{rem}

Note that, given an abelian representation $(\Dcal,\Acal;\beta)$ and a (possibly non-full) additive subcategory $\Dcal^0$ of $\Dcal$, the inclusion functor $\Dcal^0 \hookrightarrow \Dcal$ defines a canonical exact $1$-morphism of abelian representations
\begin{equation*}
	(\Dcal^0,\Acal;\beta^0) \rightarrow (\Dcal,\Acal;\beta),
\end{equation*}
where we have set $\beta^0 := \beta|_{\Dcal^0}$. Applying \Cref{prop_lift-exfunct}, this yields a canonical exact functor
\begin{equation*}
	\A(\beta^0) \rightarrow \A(\beta)
\end{equation*}
which is faithful by construction (but a priori not full nor essentially surjective, even if one assumes that the inclusion $\Dcal^0 \hookrightarrow \Dcal$ is just full or just essentially surjective, respectively). Thus we may naturally regard $\A(\beta^0)$ as an abelian subcategory of $\A(\beta)$. 

As a consequence of \Cref{prop:lift-morfib}, the analogous result holds for $\Ucal$-fibered representations: in this case, one starts from an abelian $\Ucal$-fibered representation $(\Dcal,\Acal;\beta)$ together with a (possibly non-full) $\Ucal$-fibered subcategory $\Dcal^0$ of $\Dcal$, and one gets a canonical faithful exact morphism of $\Ucal$-fibered categories
\begin{equation*}
	\A(\beta^0) \rightarrow \A(\beta)
\end{equation*}
expressing $\A(\beta^0)$ naturally as a $\Ucal$-fibered subcategory of $\A(\beta)$.

As a consequence of the previous result, we get the following:

\begin{cor}\label{cor:lift-weakly-cofinal}
	Let $(\Dcal,\Acal;\beta)$ be an abelian $\Ucal$-fibered representation; in addition, let $\Dcal^0$ be a (possibly non-full) additive $\Ucal$-fibered subcategory of $\Dcal$, and let $(\Dcal^0,\Acal;\beta^0)$ denote the restricted abelian $\Ucal$-fibered representation.
	Suppose that $\Dcal^0$ is weakly cofinal in $\Dcal$ in the sense of \Cref{defn:w-cof}. Then $\A(\beta^0)$ is weakly dense in $\A(\beta)$ as well.
\end{cor}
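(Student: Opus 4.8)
The plan is to reduce the statement to \Cref{lem_A-2colim}(2) together with the invariance of universal abelian factorizations under equivalences of the source category. Set $\Dcal_\infty := 2-\varinjlim_{U \in \Ucal^{op}} \Dcal(U)$, $\Dcal^0_\infty := 2-\varinjlim_{U \in \Ucal^{op}} \Dcal^0(U)$ and $\Acal_\infty := 2-\varinjlim_{U \in \Ucal^{op}} \Acal(U)$, with induced representations $\beta_\infty: \Dcal_\infty \rightarrow \Acal_\infty$ and $\beta^0_\infty: \Dcal^0_\infty \rightarrow \Acal_\infty$; by construction $\beta^0_\infty = \beta_\infty \circ j_\infty$, where $j_\infty: \Dcal^0_\infty \rightarrow \Dcal_\infty$ is the canonical functor, and the hypothesis that $\Dcal^0$ is weakly dense in $\Dcal$ says exactly that $j_\infty$ is an equivalence. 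Hence $j_\infty$ underlies an exact $1$-morphism of abelian representations $(\Dcal^0_\infty,\Acal_\infty;\beta^0_\infty) \rightarrow (\Dcal_\infty,\Acal_\infty;\beta_\infty)$ whose quiver-level component is an equivalence.

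First I would observe, via \Cref{cor_lift-equiv} (or more elementarily via \Cref{rem_comp-lift}(2) and \Cref{rem_comp-nat-lift}(3) applied to $j_\infty$ and a quasi-inverse), that the induced exact functor $\overline{j_\infty}: \A(\beta^0_\infty) \rightarrow \A(\beta_\infty)$ is an equivalence, compatible with the faithful exact embeddings $\iota^0_\infty$ and $\iota_\infty$ into $\Acal_\infty$ through \Cref{prop_lift-exfunct}(2). Next I would apply \Cref{lem_A-2colim}(2) twice, to $(\Dcal,\Acal;\beta)$ and to $(\Dcal^0,\Acal;\beta^0)$, getting canonical equivalences
\begin{equation*}
	2-\varinjlim_{U \in \Ucal^{op}} \A(\beta_U) \xrightarrow{\sim} \A(\beta_\infty), \qquad 2-\varinjlim_{U \in \Ucal^{op}} \A(\beta^0_U) \xrightarrow{\sim} \A(\beta^0_\infty),
\end{equation*}
each commuting with the structure functors down to $\Acal_\infty$. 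Splicing these two equivalences with $\overline{j_\infty}$ produces an equivalence $2-\varinjlim_{U} \A(\beta^0_U) \xrightarrow{\sim} 2-\varinjlim_{U} \A(\beta_U)$.

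It then remains to identify this composite with the functor on $2$-colimits induced by the canonical faithful exact morphism of $\Ucal$-fibered categories $\A(\beta^0) \rightarrow \A(\beta)$ coming from \Cref{prop:lift-morfib} (the one discussed right before the statement). Once this is done, that canonical functor is an equivalence, which is precisely the assertion that $\A(\beta^0)$ is weakly dense in $\A(\beta)$. I expect this identification to be the main, though purely formal, obstacle: it amounts to checking that one square of exact functors commutes up to natural isomorphism. The standard way is to use that $2-\varinjlim_U \A(\beta^0_U)$, being equivalent to $\A(\beta^0_\infty)$, is generated under (co)kernels by the image of $\Dcal^0_\infty$, so that an exact functor out of it is pinned down by its restriction along the canonical functor $\Dcal^0_\infty \rightarrow \A(\beta^0_\infty)$; both composites in the square restrict, along this functor, to the same additive functor $\Dcal^0_\infty \rightarrow \A(\beta_\infty)$ (namely the composite of $j_\infty$ with the canonical functor $\Dcal_\infty \rightarrow \A(\beta_\infty)$), the accompanying natural isomorphisms being made to agree using the conservativity of $\iota_\infty$. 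As in the proof of \Cref{lem_A-2colim}, the whole argument thus reduces to carefully tracking the canonical equivalences produced by \Cref{prop_lift-exfunct}, \Cref{prop_nat-lift} and \Cref{prop:lift-morfib} and invoking the uniqueness clauses under which they were constructed.
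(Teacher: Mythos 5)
Your proposal is correct and follows essentially the same route as the paper: reduce via the two equivalences of \Cref{lem_A-2colim}(2) to the functor $\A(\beta^0_\infty) \rightarrow \A(\beta_\infty)$, and conclude by applying \Cref{cor_lift-equiv} to the equivalence $j_\infty$ supplied by the weak-denseness hypothesis. The only difference is that you explicitly flag and sketch the verification that the resulting square commutes, which the paper simply asserts as canonical; your extra care there is harmless and arguably welcome.
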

\begin{proof}
	Consider the canonical commutative diagram of abelian categories and exact functors
	\begin{equation*}
		\begin{tikzcd}
			2-\varinjlim_{U \in \Ucal^{op}} \A(\beta^{0}_U) \arrow{rr} \isoarrow{d} && 2-\varinjlim_{U \in \Ucal^{op}} \A(\beta_U) \isoarrow{d} \\
			\A(2-\varinjlim_{U \in \Ucal^{op}} \beta^{0}_U) \arrow{rr} && \A(2-\varinjlim_{U \in \Ucal^{op}} \beta_U)
		\end{tikzcd}
	\end{equation*}
	in which the two vertical arrows are the equivalences of \Cref{lem_A-2colim}. Proving the thesis amounts to showing that the upper vertical arrow is an equivalence, and to this end it suffices to show that the lower horizontal arrow is an equivalence. But, since $\Dcal^{0}$ is weakly cofinal in $\Dcal$ by hypothesis, the obvious $1$-morphism of abelian representations
	\begin{equation*}
		(2-\varinjlim_{U \in \Ucal^{op}} \Dcal^{0}(U), 2-\varinjlim_{U \in \Ucal^{op}} \Acal(U); 2-\varinjlim_{U \in \Ucal^{op}} \beta^{0}_U) \rightarrow (2-\varinjlim_{U \in \Ucal^{op}} \Dcal(U), 2-\varinjlim_{U \in \Ucal^{op}} \Acal(U); 2-\varinjlim_{U \in \Ucal^{op}} \beta_U)
	\end{equation*}
	is an equivalence (in the sense of \Cref{defn_adj-repr}). Therefore, by \Cref{cor_lift-equiv}, the lower horizontal arrow in the above diagram is an equivalence as well.
\end{proof}

\section{Lifting (external) tensor structures}\label{sect:lift-ETS}

In this final section, we explain how to lift the structure of monoidal fibered categories to the level of universal abelian factorizations; we achieve this by combining the results of \Cref{sect:multi} on multi-exact functors with those of \Cref{sect:lift-fib} on abelian fibered categories. 

Recall that in \cite{Ter23Fib} we have shown how the usual structure and properties of monoidal fibered categories, which are usually expressed in terms of the internal tensor product, can be canonically rephrased in terms of the external tensor product - provided the base category admits binary products, so that the external tensor product can be actually defined.

In view of our applications to perverse Nori motives, where working with the external tensor product is the most natural choice, we chose to formulate the results of the present section in terms the external tensor product. The interested reader can easily translate our statements and proofs to the usual setting via the dictionary of \cite{Ter23Fib}. 

\subsection{Main result}

As in the previous section, we work over a fixed small category $\Scal$, but we now assume that $\Scal$ admits binary products.

For convenience, we start by recalling the notion of external tensor structure on $\Scal$-fibered categories from \cite[\S\S~2, 8]{Ter23Fib}, adapted to the additive and abelian settings:

\begin{defn}\label{defn:ETS}
	Let $\Ccal$ be an additive $\Scal$-fibered category.
	\begin{enumerate}
		\item An \textit{external tensor structure} $(\boxtimes,m)$ on $\Ccal$ is the datum of
		\begin{itemize}
			\item for every $S_1, S_2 \in \Scal$, a bi-additive functor
			\begin{equation*}
				-\boxtimes- = - \boxtimes_{S_1,S_2} -: \Ccal(S_1) \times \Ccal(S_2) \rightarrow \Ccal(S_1 \times S_2),
			\end{equation*}
			called the \textit{external tensor product functor} over $(S_1,S_2)$,
			\item for every choice of morphisms $f_i: T_i \rightarrow S_i$ in $\Scal$, $i = 1,2$, a natural isomorphism of functors $\Ccal(S_1) \times \Ccal(S_2) \rightarrow \Ccal(T_1 \times T_2)$
			\begin{equation*}
				m = m_{f_1,f_2}: f_1^* A_1 \boxtimes f_2^* A_2 \xrightarrow{\sim} (f_1 \times f_2)^*(A_1 \boxtimes A_2),
			\end{equation*}
			called the \textit{external monoidality isomorphism} along $(f_1,f_2)$
		\end{itemize}
		satisfying the following condition:
		\begin{enumerate}
			\item[($m$ETS)] For every choice of composable morphisms $f_i: T_i \rightarrow S_i$, $g_i: S_i \rightarrow V_i$, $i = 1,2$, the diagram of functors $\Ccal(V_1) \times \Ccal(V_2) \rightarrow \Ccal(T_1 \times T_2)$
			\begin{equation*}
				\begin{tikzcd}[font=\small]
					(g_1 f_1)^* C_1 \boxtimes (g_2 f_2)^* C_2  \arrow{rr}{m} \arrow[equal]{d} && (g_1 f_1 \times g_2 f_2)^* (C_1 \boxtimes C_2)  \arrow[equal]{d} \\
					f_1^* g_1^* C_1 \boxtimes f_2^* g_2^* C_2  \arrow{r}{m} & (f_1 \times f_2)^* (g_1^* C_1 \boxtimes g_2^* C_2) \arrow{r}{m} & (f_1 \times f_2)^* (g_1 \times g_2)^* (C_1 \boxtimes C_2)
				\end{tikzcd}
			\end{equation*}	
			is commutative.
		\end{enumerate}
	    \item In case the $\Scal$-fibered category $\Ccal$ is abelian, we say that an external tensor structure $(\boxtimes,m)$ on $\Ccal$ is \textit{exact} if all external tensor product functors $- \boxtimes_S -$ are bi-exact.
	\end{enumerate}
\end{defn}

In order to extend the notion of external tensor structure to $\Scal$-fibered representations, we also need to recall the notion of external tensor structure on a morphism of $\Scal$-fibered categories from \cite[\S~8]{Ter23Fib}, adapted to the additive setting:

\begin{defn}\label{defn:ETS-mor}
	Let $R: \Ccal \rightarrow \Ecal$ be a $1$-morphism of (additive) $\Scal$-fibered categories. 
	An \textit{external tensor structure} on $R$ (with respect to $(\boxtimes_{\Ccal},m_{\Ccal})$ and $(\boxtimes_{\Ecal},m_{\Ecal})$) is the datum of
	\begin{itemize}
		\item for every $S_1, S_2 \in \Scal$, a natural isomorphism of functors $\Ccal(S_1) \times \Ccal(S_2) \rightarrow \Ecal(S_1 \times S_2)$
		\begin{equation*}
			\rho = \rho_{S_1,S_2}: R_{S_1}(C_1) \boxtimes_{\Ecal} R_{S_2}(C_2) \xrightarrow{\sim} R_{S_1 \times S_2}(C_1 \boxtimes_{\Ccal} C_2),
		\end{equation*}
		called the \textit{external $R$-monoidality isomorphism} at $(S_1,S_2)$,
	\end{itemize}
	satisfying the following condition:
	\begin{enumerate}
		\item[(mor-ETS)] For every two morphisms $f_i: T_i \rightarrow S_i$ in $\Scal$, $i = 1,2$, the natural diagram of functors $\Hbb_1(S_1) \times \Hbb_1(S_2) \rightarrow \Hbb_2(T_1 \times T_2)$
		\begin{equation*}
			\begin{tikzcd}
				f_1^* R_{S_1}(C_1) \boxtimes_{\Ecal} f_2^* R_{S_2}(C_2) \arrow{r}{m_2} \arrow{d}{\theta} & (f_1 \times f_2)^* (R_{S_1}(C_1) \boxtimes_{\Ecal} R_{S_2}(C_2)) \arrow{r}{\rho} & (f_1 \times f_2)^* R_{S_1 \times S_2}(C_1 \boxtimes_{\Ccal} C_2) \arrow{d}{\theta} \\
				R_{T_1}(f_1^* C_1) \boxtimes_{\Ecal} R_{T_2}(f_2^* C_2) \arrow{r}{\rho} & R_{T_1 \times T_2}(f_1^* C_1 \boxtimes_{\Ccal} f_2^* C_2) \arrow{r}{m_1} & R_{T_1 \times T_2}((f_1 \times f_2)^* (C_1 \boxtimes_{\Ccal} C_2))
			\end{tikzcd}
		\end{equation*}
		is commutative.
	\end{enumerate}
\end{defn}

With these notions at our disposal, we can define external tensor structures on additive $\Scal$-fibered representations in a natural way as follows:

\begin{defn}
	Let $(\Ccal,\Ecal;\beta)$ be an (additive) $\Scal$-fibered representation. 
	\begin{enumerate}
		\item An \textit{external tensor structure} on $(\Ccal,\Ecal;\beta)$ is the datum of
		\begin{itemize}
			\item an external tensor structure $(\boxtimes_{\Ccal},m_{\Ccal})$ on $\Ccal$,
			\item an external tensor structure $(\boxtimes_{\Ecal},m_{\Ecal})$ on $\Ecal$,
			\item an external tensor structure $\rho$ on the morphism of $\Scal$-fibered categories $\beta: \Ccal \rightarrow \Ecal$.
		\end{itemize}
		We write it as a triple $(\boxtimes_{\Ccal},\boxtimes_{\Ecal};\rho)$ where, for convenience, we leave the symbols $m_{\Ccal}$ and $m_{\Ecal}$ unexplicit.
		\item In case the $\Scal$-fibered representation $(\Ccal,\Ecal;\beta)$ is abelian, we say that an external tensor structure on $(\Ccal,\Ecal;\beta)$ is \textit{exact} if the external tensor structure $(\boxtimes_{\Ecal},m_{\Ecal})$ on the abelian $\Scal$-fibered category $\Ecal$ is exact.
	\end{enumerate}
\end{defn}

In \cite[\S 2]{Ter23Fib} we explained how to axiomatize external tensor structures on $\Scal$-fibered categories as suitable $(\Scal^2 \times \Ical)$-fibered categories, where $\Ical$ denotes the category with two objects and exactly one non-trivial morphism between them. 

The result that we discuss now bears a strong resemblance to those concerning abelian fibered categories. Note that, strictly speaking, we are not allowed to deduce them directly from the previous results on fibered categories via the constructions of \cite[\S 2]{Ter23Fib}: the reason is that the external tensor product functors are not exact on the product abelian category but rather separately exact on each variable. However, the proof method is essentially the same as before.

\begin{prop}\label{prop:lift-ETS}
	Let $(\Dcal,\Acal;\beta)$ be an abelian $\Scal$-fibered representation, and suppose that we are given an exact external tensor structure $(\boxtimes_{\Dcal},\boxtimes_{\Acal};\rho)$ on it. Then:
	\begin{enumerate}
		\item Associating
		\begin{itemize}
			\item to every $S_1,S_2 \in \Scal$, the bi-exact functor
			\begin{equation*}
				- \overline{\boxtimes}_{S_1,S_2} - = - \overline{\boxtimes} -: \A(\beta_{S_1}) \times \A(\beta_{S_2}) \rightarrow \A(\beta_{S_1 \times S_2})
			\end{equation*}
			obtained via \Cref{prop:lift-multiex}(1),
			\item to every two morphisms $f_i: T_i \rightarrow S_i$, $i = 1,2$, the natural transformation of exact functors $\A(\beta_{S_1}) \times \A(\beta_{S_2}) \rightarrow \A(\beta_{T_1 \times T_2})$
			\begin{equation}\label{monoext:fib-lift}
				\tilde{m} = \tilde{m}_{f_1,f_2}: f_1^* M_1 \overline{\boxtimes} f_2^* M_2 \xrightarrow{\sim} (f_1 \times f_2)^* (M_1 \overline{\boxtimes} M_2)
			\end{equation}
			obtained via \Cref{prop:lift-multinat}
		\end{itemize}
		defines an exact external tensor structure $(\overline{\boxtimes},\tilde{m})$ on the abelian $\Scal$-fibered category $\A(\beta)$.
		\item Associating
		\begin{itemize}
			\item to every $S_1, S_2 \in \Scal$, the natural isomorphism of functors $\A(\beta_{S_1}) \times \A(\beta_{S_2}) \rightarrow \Acal(S_1 \times S_2)$
			\begin{equation}\label{tilde:rho-fib}
				\tilde{\rho} = \tilde{\rho}_{S_1,S_2}: \iota_{S_1}(M_1) \boxtimes \iota_{S_2}(M_2) \xrightarrow{\sim} \iota_{S_1 \times S_2}(M_1 \overline{\boxtimes} M_2)
			\end{equation}
			obtained via \Cref{prop:lift-multiex}(2)
		\end{itemize}
		defines an external tensor structure on the morphism of $\Scal$-fibered categories $\iota: \A(\beta) \hookrightarrow \Acal$ of \Cref{prop:lift-morfib} (with respect to the external tensor structure obtained in the previous point).
	\end{enumerate} 
\end{prop}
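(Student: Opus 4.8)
The plan is to follow the template of the proofs of \Cref{prop_lifting-fib} and \Cref{prop:lift-morfib}, but with the multi-linear lifting results \Cref{prop:lift-multiex} and \Cref{prop:lift-multinat} in place of \Cref{prop_lift-exfunct} and \Cref{prop_nat-lift}. As the remark preceding the statement warns, one cannot simply reduce to the fibered case by regarding an external tensor structure as an auxiliary $(\Scal^2 \times \Ical)$-fibered category and invoking \Cref{prop_lifting-fib}, since the external tensor product functors are only separately exact rather than exact on the product abelian category; but the combinatorics of the argument are unchanged.

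First I would produce the data. For each pair $S_1, S_2 \in \Scal$, the functors $\boxtimes_{\Dcal}$ and $\boxtimes_{\Acal}$ together with the external $\beta$-monoidality isomorphism $\rho_{S_1,S_2}$ form a multi-exact $1$-morphism of abelian representations
\[
	(\boxtimes_{\Dcal},\boxtimes_{\Acal};\rho_{S_1,S_2}): (\Dcal(S_1),\Acal(S_1);\beta_{S_1}) \times (\Dcal(S_2),\Acal(S_2);\beta_{S_2}) \longrightarrow (\Dcal(S_1 \times S_2),\Acal(S_1 \times S_2);\beta_{S_1 \times S_2}),
\]
and applying \Cref{prop:lift-multiex}(1)--(2) produces the bi-exact functor $- \overline{\boxtimes}_{S_1,S_2} -$ together with the natural isomorphism $\tilde{\rho}_{S_1,S_2}$ of \eqref{tilde:rho-fib}. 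Next, for each pair of morphisms $f_i : T_i \to S_i$ in $\Scal$, I would note that the two prescriptions ``first pull back by $(f_1^*,f_2^*)$, then apply $\overline{\boxtimes}$ over $(T_1,T_2)$'' and ``first apply $\overline{\boxtimes}$ over $(S_1,S_2)$, then pull back by $(f_1 \times f_2)^*$'' are two multi-exact $1$-morphisms of abelian representations with the same source and target, each obtained, via the compatibility of the lifting procedure with composition (the analogue of \Cref{rem_comp-lift}(2)), from the lift of the corresponding composite bi-additive functor in $\Dcal$ and $\Acal$; the external monoidality isomorphisms $m_{\Dcal}$ and $m_{\Acal}$ then assemble into a $2$-morphism between these two $1$-morphisms, by naturality of $m$. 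Feeding this $2$-morphism into \Cref{prop:lift-multinat} yields the natural transformation $\tilde{m}_{f_1,f_2}$ of \eqref{monoext:fib-lift}, which is an isomorphism by the analogue of \Cref{rem:lift-multinat}(3) since $m_{\Dcal}$ is.

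Finally I would verify the coherence axioms. For axiom ($m$ETS) on $(\overline{\boxtimes},\tilde{m})$: fixing composable morphisms $f_i : T_i \to S_i$ and $g_i : S_i \to V_i$, both composites appearing in the relevant hexagon are $2$-morphisms lifting --- via the compatibility of \Cref{prop:lift-multinat} with composition of $2$-morphisms (the analogue of \Cref{rem_comp-nat-lift}(2), cf.\ \Cref{rem:lift-multinat}(2)) --- the corresponding composites for $(\boxtimes_{\Dcal},m_{\Dcal})$; since the latter coincide by axiom ($m$ETS) in $\Dcal$ and since a lift is uniquely determined by its restriction along the $\pi_{V_i}$, the two lifts coincide as well. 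The verification of axiom (mor-ETS) for $\tilde{\rho}$ is identical in spirit: both composites in the relevant square lift, via the same compatibility of \Cref{prop:lift-multiex} and \Cref{prop:lift-multinat} with composition, the two composites in axiom (mor-ETS) for $\rho$, which agree by hypothesis. This is precisely the reasoning used in the proofs of \Cref{prop_lifting-fib} and \Cref{prop:lift-morfib}. I expect the only real obstacle to be bookkeeping: keeping track of which source and target representations are involved at each stage, and checking that the lifts of the composite functors on the $\Dcal$-level and on the $\A(\beta)$-level genuinely match; once the $1$- and $2$-morphisms are set up correctly, every coherence statement follows formally from the uniqueness clauses of \Cref{prop:lift-multiex} and \Cref{prop:lift-multinat}, with no genuine computation.
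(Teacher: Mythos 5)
Your proposal is correct and follows essentially the same route as the paper's (very terse) proof, which likewise reduces every coherence axiom to the corresponding axiom on the $\Dcal$-level via the uniqueness and composition-compatibility clauses of \Cref{prop:lift-multiex} and \Cref{prop:lift-multinat}. One small imprecision: the fact that $(m_{\Dcal},m_{\Acal})$ assembles into a $2$-morphism between the two composite $1$-morphisms is not mere naturality of $m$ but is precisely axiom (mor-ETS) for $\rho$ from \Cref{defn:ETS-mor}, i.e.\ part of the hypothesis that $(\boxtimes_{\Dcal},\boxtimes_{\Acal};\rho)$ is an external tensor structure on the representation.
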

\begin{proof}
	Arguing as in the proofs of the previous results, we see that the natural isomorphisms \eqref{monoext:fib-lift} necessarily satisfy condition ($m$-ETS) of \Cref{defn:ETS}(1), which implies (1). Similarly, the natural isomorphisms \eqref{tilde:rho-fib} necessarily satisfy condition (mor-ETS) of \Cref{defn:ETS}(2), which implies (2).
\end{proof}

\subsection{Lifting associativity and commutativity constraints}

In \cite[\S\S~3,4]{Ter23Fib} we explained how to translate the usual notions of associativity and commutativity constraints for monoidal $\Scal$-fibered categories in the setting of external tensor structures. We now want to discuss briefly how to lift these constraints to universal abelian factorizations. As usual, we start by recalling the relevant definitions from \cite{Ter23Fib}, adapted to the additive setting:

\begin{defn}\label{defn:ETS-asso-symm}
	Let $\Ccal$ be an (additive) $\Scal$-fibered category endowed with an (additive) external tensor structure $(\boxtimes,m)$.
	\begin{enumerate}
		\item An \textit{external associativity constraint} $a$ on $(\boxtimes,m)$ is the datum of
		\begin{itemize}
			\item for every $S_1, S_2, S_3 \in \Scal$, a natural isomorphism of functors $\Ccal(S_1) \times \Ccal(S_2) \times \Ccal(S_3) \rightarrow \Ccal(S_1 \times S_2 \times S_3)$
			\begin{equation*}
				a = a_{S_1,S_2,S_3}: (A_1 \boxtimes A_2) \boxtimes A_3 \xrightarrow{\sim} A_1 \boxtimes (A_2 \boxtimes A_3)
			\end{equation*}
		\end{itemize}
		satisfying the following conditions:
		\begin{enumerate}
			\item[($a$ETS-1)] For every choice of $S_1, S_2, S_3, S_4 \in \Scal$, the diagram of functors $\Ccal(S_1) \times \Ccal(S_2) \times \Ccal(S_3) \times \Ccal(S_4) \rightarrow \Ccal(S_1 \times S_2 \times S_3 \times S_4)$
			\begin{equation*}
				\begin{tikzcd}
					((C_1 \boxtimes C_2) \boxtimes C_3) \boxtimes C_4 \arrow{d}{a} \arrow{rr}{a} && (C_1 \boxtimes C_2) \boxtimes (C_3 \boxtimes C_4) \arrow{d}{a} \\
					(C_1 \boxtimes (C_2 \boxtimes C_3)) \boxtimes C_4 \arrow{r}{a} & C_1 \boxtimes ((C_2 \boxtimes C_3) \boxtimes C_4) \arrow{r}{a} & C_1 \boxtimes (C_2 \boxtimes (C_3 \boxtimes C_4))
				\end{tikzcd}
			\end{equation*}
			is commutative.
			\item[($a$ETS-2)] For every three morphisms $f_i: T_i \rightarrow S_i$ in $\Scal$, $i = 1,2,3$, the diagram of functors $\Ccal(S_1) \times \Ccal(S_2) \times \Ccal(S_3) \rightarrow \Ccal(T_1 \times T_2 \times T_3)$
			\begin{equation*}
				\begin{tikzcd}
					(f_1^* C_1 \boxtimes f_2^* C_2) \boxtimes f_3^* C_3 \arrow{r}{m} \arrow{d}{a} & (f_1 \times f_2)^* (C_1 \boxtimes C_2) \boxtimes f_3^* C_3 \arrow{r}{m} & (f_1 \times f_2 \times f_3)^* ((C_1 \boxtimes C_2) \boxtimes C_3) \arrow{d}{a} \\
					f_1^* C_1 \boxtimes (f_2^* C_2 \boxtimes f_3^* C_3) \arrow{r}{m} & f_1^* C_1 \boxtimes (f_2 \times f_3)^* (C_2 \boxtimes C_3) \arrow{r}{m} & (f_1 \times f_2 \times f_3)^* (C_1 \boxtimes (C_2 \boxtimes C_3))  
				\end{tikzcd}
			\end{equation*}
			is commutative.
		\end{enumerate}
	    \item An \textit{external commutativity constraint} $c$ on $(\boxtimes,m)$ is the datum of
	    \begin{itemize}
	    	\item for every $S_1, S_2 \in \Scal$, a natural isomorphism of functors $\Ccal(S_1) \times \Ccal(S_2) \rightarrow \Ccal(S_1 \times S_2)$
	    	\begin{equation*}
	    		c = c_{S_1,S_2}: A_1 \boxtimes A_2 \xrightarrow{\sim} \tau^*(A_2 \boxtimes A_1),
	    	\end{equation*}
	    	(where $\tau: S_1 \times S_2 \rightarrow S_2 \times S_1$ denotes the permutation isomorphism)
	    \end{itemize}
	    satisfying the following conditions:
	    \begin{enumerate}
	    	\item[($c$ETS-1)]  For every $S_1, S_2 \in \Scal$, the diagram of functors $\Ccal(S_1) \times \Ccal(S_2) \rightarrow \Ccal(S_1 \times S_2)$
	    	\begin{equation*}
	    		\begin{tikzcd}
	    			C_1 \boxtimes C_2 \arrow{r}{c} \arrow[equal]{dr} & \tau^* (C_2 \boxtimes C_1) \arrow{d}{c} \\
	    			& \tau^* \tau^*(C_1 \boxtimes C_2)
	    		\end{tikzcd}
	    	\end{equation*}
	    	is commutative.
	    	\item[($c$ETS-2)] For every choice of of morphisms $f_i: T_i \rightarrow S_i$ in $\Scal$, $i = 1,2$, the diagram of functors $\Ccal(S_1) \times \Ccal(S_2) \rightarrow \Ccal(T_2 \times T_1)$
	    	\begin{equation*}
	    		\begin{tikzcd}
	    			f_1^* C_1 \boxtimes f_2^* C_2 \arrow{rr}{c} \arrow{d}{m} && \tau^* (f_2^* C_2 \boxtimes f_1^* C_1) \arrow{d}{m} \\
	    			(f_1 \times f_2)^* (C_1 \boxtimes C_2)\arrow{r}{c} & (f_1 \times f_2)^* \tau^* (C_2 \boxtimes C_1) \arrow[equal]{r} & \tau^* (f_2 \times f_1)^* (C_2 \boxtimes C_1)
	    		\end{tikzcd}
	    	\end{equation*}
	    	(where $\tau: S_1 \times S_2 \xrightarrow{\sim} S_2 \times S_1$ and $\tau: T_1 \times T_2 \xrightarrow{\sim} T_2 \times T_1$ denote the permutation isomorphisms) is commutative.
	    \end{enumerate}
	\end{enumerate}
\end{defn}

In order to define reasonable notions of associativity and commutativity constraints in the setting of $\Scal$-fibered representations, we also need to recall the notions of associativity and symmetry for external tensor structures on morphisms of $\Scal$-fibered categories from \cite[\S~8]{Ter23Fib}, adapted to the additive setting:

\begin{defn}\label{defn:mor-acETS}
	Let $\Ccal$ and $\Ecal$ be two (additive) $\Scal$-fibered categories endowed with additive external tensor structures $(\boxtimes_{\Ccal},m_{\Ccal})$ and $(\boxtimes_{\Ecal},m_{\Ecal})$, respectively; in addition, let $R: \Ccal \rightarrow \Ecal$ be an additive morphism of $\Scal$-fibered categories, and let $\rho$ be an external tensor structure on $R$ (with respect to $(\boxtimes_{\Ccal},m_{\Ccal})$ and $(\boxtimes_{\Ecal},m_{\Ecal})$).
	\begin{enumerate}
		\item Let $a_{\Ccal}$ and $a_{\Ecal}$ be external associativity constraints on $(\boxtimes_{\Ccal},m_{\Ccal})$ and $(\boxtimes_{\Ecal},m_{\Ecal})$, respectively. We say that $\rho$ is \textit{associative} (with respect to $a_{\Ccal}$ and $a_{\Ecal}$) if it satisfies the following additional condition:
		\begin{enumerate}
			\item[(mor-$a$ETS)] For every $S_1, S_2, S_3 \in \Scal$, the diagram of functors $\Ccal(S_1) \times \Ccal(S_2) \times \Ccal(S_3) \rightarrow \Ecal(S_1 \times S_2 \times S_3)$
			\begin{equation*}
				\begin{tikzcd}
					(R_{S_1}(C_1) \boxtimes_{\Ecal} R_{S_2}(C_2)) \boxtimes_{\Ecal} R_{S_3}(C_3) \arrow{r}{\rho} \arrow{d}{a_2} & R_{S_1 \times S_2}(C_1 \boxtimes_{\Ccal} C_2) \boxtimes_{\Ecal} R_{S_3}(C_3) \arrow{r}{\rho} & R_{S_1 \times S_2 \times S_3}((C_1 \boxtimes_{\Ccal} C_2) \boxtimes_{\Ccal} C_3) \arrow{d}{a_1} \\
					R_{S_1}(C_1) \boxtimes_{\Ecal} (R_{S_2}(C_2) \boxtimes_{\Ecal} R_{S_3}(C_3)) \arrow{r}{\rho} & R_{S_1}(C_1) \boxtimes_{\Ecal} R_{S_2 \times S_2}(C_2 \boxtimes_{\Ccal} C_3) \arrow{r}{\rho} & R_{S_1 \times S_2 \times S_3}(C_1 \boxtimes_{\Ccal} (C_2 \boxtimes_{\Ccal} C_3))
				\end{tikzcd}
			\end{equation*}
			is commutative.
		\end{enumerate}
		\item Let $c_{\Ccal}$ and $c_{\Ecal}$ be external commutativity constraints on $(\boxtimes_{\Ccal},m_{\Ccal})$ and $(\boxtimes_{\Ecal},m_{\Ecal})$, respectively. We say that $\rho$ is \textit{symmetric} (with respect to $c_{\Ccal}$ and $c_{\Ecal}$) if it satisfies the following additional condition:
		\begin{enumerate}
			\item[(mor-$c$ETS)] For every $S_1, S_2 \in \Scal$, the diagram of functors $\Ccal_1(S_1) \times \Ccal_1(S_2) \rightarrow \Ccal_2(S_1 \times S_2)$
			\begin{equation*}
				\begin{tikzcd}
					R_{S_1}(C_1) \boxtimes_{\Ecal} R_{S_2}(C_2) \arrow{rr}{\rho} \arrow{d}{c_{\Ecal}} && R_{S_1 \times S_2}(C_1 \boxtimes_{\Ccal} C_2) \arrow{d}{c_{\Ccal}} \\
					\tau^* (R_{S_2}(C_2) \boxtimes_{\Ecal} R_{S_1}(C_1)) \arrow{r}{\rho} & \tau^* R_{S_2 \times S_1}(C_2 \boxtimes_{\Ccal} C_1) \arrow{r}{\theta} & R_{S_1 \times S_2} (\tau^* (C_2 \boxtimes_{\Ccal} C_1))
				\end{tikzcd}
			\end{equation*}
		    (where $\tau: S_1 \times S_2 \xrightarrow{\sim} S_2 \times S_1$ denotes the permutation isomorphism) is commutative.
		\end{enumerate}
	\end{enumerate}
\end{defn}

With these notions at hand, we can define external associativity and commutativity constraints on additive $\Scal$-fibered representations in a natural way as follows:

\begin{defn}\label{defn:ETS-mor-asso-symm}
	Let $(\Ccal,\Ecal;\beta)$ be an (additive) $\Scal$-fibered representation endowed with an (additive) external tensor structure $(\boxtimes_{\Ccal},\boxtimes_{\Ecal};\delta)$. 
	\begin{enumerate}
		\item An \textit{external associativity constraint} $(a_{\Ccal},a_{\Ecal})$ on $(\boxtimes_{\Ccal},\boxtimes_{\Ecal};\delta)$ is the datum of
		\begin{itemize}
			\item an external associativity constraint $a_{\Ccal}$ on $(\boxtimes_{\Ccal},m_{\Ccal})$,
			\item an external associativity constraint $a_{\Ecal}$ on $(\boxtimes_{\Ecal},m_{\Ecal})$
		\end{itemize}
	    such that the external tensor structure $\nu$ on $\beta$ is associative in the sense of \Cref{defn:mor-acETS}(1).
	    \item An \textit{external commutativity constraint} $(c_{\Ccal},c_{\Ecal})$ on $(\boxtimes_{\Ccal},\boxtimes_{\Ecal};\delta)$ is the datum of
	    \begin{itemize}
	    	\item an external commutativity constraint $c_{\Ccal}$ on $(\boxtimes_{\Ccal},m_{\Ccal})$,
	    	\item an external commutativity constraint $c_{\Ecal}$ on $(\boxtimes_{\Ecal},m_{\Ecal})$
	    \end{itemize}
	    such that the external tensor structure $\nu$ on $\beta$ is symmetric in the sense of \Cref{defn:mor-acETS}(2). 
	\end{enumerate}
\end{defn}

We now show that external associativity and commutativity constraints of abelian $\Scal$-fibered representations lift to universal abelian factorizations in the expected way:

\begin{lem}\label{lem:lift-asso+symm}
	Let $(\Dcal,\Acal;\beta)$ be an abelian $\Scal$-fibered representation endowed with an exact external tensor structure $(\boxtimes_{\Dcal},\boxtimes_{\Acal};\nu)$; let $(\overline{\boxtimes},\bar{m})$ be the exact external tensor structure on $\A(\beta)$ obtained via \Cref{prop:lift-ETS}. Then the following statements hold:
	\begin{enumerate}
		\item Suppose that we are given an external associativity constraint $(a_{\Ccal},a_{\Ecal})$ on $(\boxtimes_{\Dcal},\boxtimes_{\Acal};\delta)$. Then, associating
		\begin{itemize}
			\item to every $S_1, S_2, S_3 \in \Scal$, the natural isomorphisms of functors $\A(\beta_{S_1}) \times \A(\beta_{S_2}) \times \A(\beta_{S_3}) \rightarrow \A(\beta_{S_1 \times S_2 \times S_3})$
			\begin{equation}\label{asso-bar}
				\overline{a}_{S_1,S_2,S_3}: (M_1 \overline{\boxtimes} M_2) \overline{\boxtimes} M_3 \xrightarrow{\sim} M_1 \overline{\boxtimes} (M_2 \overline{\boxtimes} M_3)
			\end{equation} 
		    obtained via \Cref{prop:lift-multinat}
		\end{itemize}
	    defines an external associativity constraint on $(\overline{\boxtimes},\bar{m})$, in such a way that the external tensor structures $(\boxtimes_{\Dcal},\overline{\boxtimes};\id)$ and $(\overline{\boxtimes},\boxtimes_{\Acal};\tilde{\nu})$ are associative.
	    \item Suppose that we are given an external commutativity constraint $(a_{\Ccal},a_{\Ecal})$ on $(\boxtimes_{\Dcal},\boxtimes_{\Acal};\delta)$. Then, associating
	    \begin{itemize}
	    	\item to every $S_1, S_2 \in \Scal$, the natural isomorphisms of functors $\A(\beta_{S_1}) \times \A(\beta_{S_2}) \rightarrow \A(\beta_{S_1 \times S_2})$
	    	\begin{equation}\label{symm-bar}
	    		\overline{c}_{S_1,S_2}: M_1 \overline{\boxtimes} M_2 \xrightarrow{\sim} \overline{\tau^*} (M_2 \overline{\boxtimes} M_1)
	    	\end{equation} 
    	    obtained via \Cref{prop:lift-multinat}
	    \end{itemize}
	    defines an external associativity constraint on $(\overline{\boxtimes},\bar{m})$, in such a way that the external tensor structures $(\boxtimes_{\Dcal},\overline{\boxtimes};\id)$ and $(\overline{\boxtimes},\boxtimes_{\Acal};\tilde{\nu})$ are symmetric.
	\end{enumerate}
\end{lem}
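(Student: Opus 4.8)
The plan is to deduce both statements from \Cref{prop:lift-multinat} in exactly the same way that \Cref{prop:lift-ETS} was deduced from \Cref{prop:lift-multiex}: each constraint is recognised as a $2$-morphism between suitable multilinear $1$-morphisms of abelian representations, the lifted constraint and the two ``$\mathrm{mor}$'' conditions are read off directly from the conclusion of \Cref{prop:lift-multinat}, and the remaining coherence axioms are checked by reduction to generators in each variable.

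For part (1), fix $S_1, S_2, S_3 \in \Scal$ and set $S := S_1 \times S_2 \times S_3$. The two trilinear functors $(-\boxtimes_{\Dcal}-)\boxtimes_{\Dcal}-$ and $-\boxtimes_{\Dcal}(-\boxtimes_{\Dcal}-)$ on $\Dcal(S_1)\times\Dcal(S_2)\times\Dcal(S_3)$, their counterparts on the $\Acal$-side, and the natural isomorphisms obtained by iterating the external $\beta$-monoidality isomorphism $\nu$, assemble into two multi-exact $1$-morphisms of abelian representations $\alpha,\alpha'\colon (\Dcal(S_1),\Acal(S_1);\beta_{S_1})\times(\Dcal(S_2),\Acal(S_2);\beta_{S_2})\times(\Dcal(S_3),\Acal(S_3);\beta_{S_3}) \rightarrow (\Dcal(S),\Acal(S);\beta_S)$. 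By the uniqueness in \Cref{prop:lift-multiex} and its compatibility with composition (see \Cref{rem_comp-lift}), the lifts of $\alpha$ and $\alpha'$ are $\overline{\phi}=(-\overline{\boxtimes}-)\overline{\boxtimes}-$ and $\overline{\phi'}=-\overline{\boxtimes}(-\overline{\boxtimes}-)$, which are trilinear exact because $\overline{\boxtimes}$ is bi-exact by \Cref{prop:lift-ETS}(1). The pair $(a_{\Dcal},a_{\Acal})$ is then a $2$-morphism $\alpha\to\alpha'$: its compatibility condition is precisely diagram (mor-$a$ETS) of \Cref{defn:mor-acETS}(1), which holds because $\nu$ is associative by hypothesis. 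Applying \Cref{prop:lift-multinat} yields a unique natural transformation $\overline{a}=\overline{a}_{S_1,S_2,S_3}$ as in \eqref{asso-bar}, invertible by \Cref{rem:lift-multinat}(3) since $a_{\Dcal}$ is; moreover the conclusion of \Cref{prop:lift-multinat} says exactly that $(a_{\Dcal},\overline{a})$ is a $2$-morphism $\hat{\alpha}\to\hat{\alpha'}$ and $(\overline{a},a_{\Acal})$ is a $2$-morphism $\tilde{\alpha}\to\tilde{\alpha'}$, which is the assertion that $(\boxtimes_{\Dcal},\overline{\boxtimes};\id)$ and $(\overline{\boxtimes},\boxtimes_{\Acal};\tilde{\nu})$ are associative in the sense of \Cref{defn:mor-acETS}(1).

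It remains to verify axioms ($a$ETS-1) and ($a$ETS-2) of \Cref{defn:ETS-asso-symm}(1) for $\overline{a}$. Both are equalities of natural transformations between exact multilinear functors on a product $\prod_i \A(\beta_{S_i})$; since each $\A(\beta_{S_i})$ is generated under (co)kernels by the image of $\pi_{S_i}$, it suffices to check them after precomposing with $(\pi_{S_1},\pi_{S_2},\dots)$ in each variable. On objects coming from the $\Dcal(S_i)$ the defining property of $\overline{a}$ (that $(a_{\Dcal},\overline{a})$ is a $2$-morphism $\hat{\alpha}\to\hat{\alpha'}$) identifies $\overline{a}$ with $a_{\Dcal}$, and likewise $\bar{m}$ restricts to $m_{\Dcal}$; hence ($a$ETS-1) and ($a$ETS-2) for $\overline{a}$ reduce to the corresponding axioms for $a_{\Dcal}$, which hold by hypothesis. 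Part (2) is entirely parallel, with $\tau\colon S_1\times S_2 \xrightarrow{\sim} S_2\times S_1$ the permutation isomorphism and $\overline{\tau^*}\colon \A(\beta_{S_2\times S_1})\to\A(\beta_{S_1\times S_2})$ the exact lift of $\tau^*$ from \Cref{prop_lifting-fib}: the functors $-\overline{\boxtimes}-$ and $(M_1,M_2)\mapsto\overline{\tau^*}(M_2\overline{\boxtimes}M_1)$ are the lifts of the corresponding $\Dcal$-side functors, the pair $(c_{\Dcal},c_{\Acal})$ is a $2$-morphism between the associated bilinear $1$-morphisms with compatibility condition diagram (mor-$c$ETS) of \Cref{defn:mor-acETS}(2) (valid since $\nu$ is symmetric), and \Cref{prop:lift-multinat} produces \eqref{symm-bar} together with the symmetry of $(\boxtimes_{\Dcal},\overline{\boxtimes};\id)$ and $(\overline{\boxtimes},\boxtimes_{\Acal};\tilde{\nu})$; axioms ($c$ETS-1) and ($c$ETS-2) for $\overline{c}$ follow by the same reduction to generators, using in addition the identity $\overline{\tau^*}\,\overline{\tau^*}=\overline{\tau^*\tau^*}=\overline{\id}=\id$ granted by compatibility of the lifting with composition.

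The main obstacle is bookkeeping rather than conceptual: one must assemble the iterated external $\beta$-monoidality isomorphisms $\nu$ into the natural isomorphisms of the multilinear $1$-morphisms $\alpha,\alpha'$ so that the $2$-morphism compatibility square of \Cref{defn:2mor-repr} (in its evident multilinear form, as already used in \Cref{prop:lift-multinat}) becomes literally diagram (mor-$a$ETS), respectively (mor-$c$ETS). Once this identification is in place, everything else is either a direct quotation of the conclusion of \Cref{prop:lift-multinat} or a routine check on generators, legitimate because $\overline{\boxtimes}$ and $\overline{\tau^*}$ are exact.
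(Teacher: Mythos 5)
Your proof is correct and follows essentially the same route as the paper: the ``associative/symmetric morphism'' assertions are read off directly from the conclusion of \Cref{prop:lift-multinat}, and the coherence axioms for $\overline{a}$ and $\overline{c}$ are inherited from those of $a_{\Dcal}$ and $c_{\Dcal}$ via the uniqueness of lifted natural transformations. The only cosmetic difference is that the paper phrases this last step as compatibility of the lifting with composition of $2$-morphisms (\Cref{rem:lift-multinat}(2)) while you phrase it as a reduction to generators; both rest on the same uniqueness statement in \Cref{prop:lift-multinat}.
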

\begin{proof}
	\begin{enumerate}
		\item In order to show that the natural isomorphisms \eqref{asso-bar} define an external associativity constraint, we need to check that they satisfy condition ($a$ETS-1) and ($a$ETS-2) of \Cref{defn:ETS-asso-symm}(1). In view of \Cref{rem:lift-multinat}(2), this follows from the fact that the same condition is satisfied by the corresponding natural isomorphisms on the $\Scal$-fibered category $\Dcal$. Moreover, saying that the external tensor structures $\id$ on $\pi$ and $\tilde{\nu}$ on $\iota$ are associative is the same as saying that, for every $S_1, S_2, S_3 \in \Scal$, the pairs $(a_{\Dcal;S_1,S_2,S_3},\overline{a}_{S_1,S_2,S_3})$ and $(\overline{a}_{S_1,S_2,S_3},a_{\Acal;S_1,S_2,S_3})$ define $2$-morphisms of multi-linear representations, which is indeed the case by \Cref{prop:lift-multinat}.
		\item In order to show that the natural isomorphisms \eqref{symm-bar} define an external commutativity constraint, we need to show that they satisfy condition ($c$ETS) of \Cref{defn:ETS-asso-symm}(2). In view of \Cref{rem:lift-multinat}(2), this follows from the fact that the same condition is satisfied by the corresponding natural isomorphisms on the $\Scal$-fibered category $\Dcal$. Moreover, saying that the external tensor structures $\id$ on $\pi$ and $\tilde{\nu}$ on $\iota$ are associative is the same as saying that, for every $S_1, S_2 \in \Scal$, the pairs $(c_{\Dcal;S_1,S_2},\overline{c}_{S_1,S_2})$ and $(\overline{c}_{S_1,S_2},c_{\Acal;S_1,S_2})$ define $2$-morphisms of multi-linear representations, which is indeed the case by \Cref{prop:lift-multinat}.
	\end{enumerate}
\end{proof}

Finally, recall that in \cite[\S~6]{Ter23Fib} we explained how to translate the natural compatibility condition between associativity and commutativity in the setting of external tensor structures. We want to lift this to universal abelian factorizations as well. To this end, it suffices to recall one more definition:

\begin{defn}\label{defn:acETS}
	Let $\Ccal$ be an (additive) $\Scal$-fibered category endowed with an (additive) external tensor structure $(\boxtimes,m)$; in addition, let $a$ and $c$ be an external associativity constraint and an external commutativity constraint on $(\boxtimes,m)$, respectively.
	
	We say that the constraints $a$ and $c$ are \textit{compatible} if they satisfy the following condition:
	\begin{enumerate}
		\item[($ac$ETS)] For every $S_1, S_2, S_3 \in \Scal$, the diagram of functors $\Ccal(S_1) \times \Ccal(S_2) \times \Ccal(S_3) \rightarrow \Ccal(S_1 \times S_2 \times S_3)$
		\begin{equation*}
			\begin{tikzcd}[font=\small]
				(C_1 \boxtimes C_2) \boxtimes C_3 \arrow{r}{c} \arrow{dd}{a} & \tau_{(12)}^* (C_2 \boxtimes C_1) \boxtimes C_3 \arrow{r}{m} & \tau_{(12)(3)}^* ((C_2 \boxtimes C_1) \boxtimes C_3) \arrow{r}{a} & \tau_{(12)(3)}^* (C_2 \boxtimes (C_1 \boxtimes C_3)) \arrow{d}{c} \\
				&&& \tau_{(12)(3)}^* (C_2 \boxtimes \tau_{(13)}^* (C_3 \boxtimes C_1)) \arrow{d}{m} \\
				C_1 \boxtimes (C_2 \boxtimes C_3) \arrow{r}{c} & \tau_{(132)}^* ((C_2 \boxtimes C_3) \boxtimes C_1) \arrow{r}{a} & \tau_{(132)}^* (C_2 \boxtimes (C_3 \boxtimes C_1)) \arrow[equal]{r} & \tau_{(12)(3)}^* \tau_{(2)(13)}^* (C_2 \boxtimes (C_3 \boxtimes C_1))
			\end{tikzcd}
		\end{equation*}
		is commutative.
	\end{enumerate}
\end{defn}

\begin{lem}\label{lem:lift-ac}
	Let $(\Dcal,\Acal;\beta)$ be an abelian $\Scal$-fibered representation endowed with an exact external tensor structure $(\boxtimes_{\Dcal},\boxtimes_{\Acal};\nu)$; let $(\overline{\boxtimes},\bar{m})$ be the exact external tensor structure on $\A(\beta)$ obtained via \Cref{prop:lift-ETS}.
	
	Suppose that we are given an external associativity constraint $(a_{\Ccal},a_{\Ecal})$ and an external commutativity constraint $(c_{\Dcal},c_{\Acal})$ on $(\boxtimes_{\Dcal},\boxtimes_{\Acal};\delta)$ such that $a_{\Dcal}$ and $c_{\Dcal}$ are compatible. 
	Then the external associativity and commutativity constraints $\bar{a}$ and $\bar{c}$ on $(\overline{\boxtimes},\overline{m})$ obtained via \Cref{lem:lift-asso+symm} are compatible as well. 
\end{lem}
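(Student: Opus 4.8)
The plan is to reduce, exactly as in the proof of \Cref{lem:lift-asso+symm}, to checking condition ($ac$ETS) on generators. Fix $S_1, S_2, S_3 \in \Scal$. By \Cref{defn:acETS}, asserting the compatibility of $\overline{a}$ and $\overline{c}$ over $(S_1,S_2,S_3)$ amounts to the commutativity of a large diagram of natural isomorphisms between multi-exact functors $\A(\beta_{S_1}) \times \A(\beta_{S_2}) \times \A(\beta_{S_3}) \rightarrow \A(\beta_{S_1 \times S_2 \times S_3})$. Each functor occurring in this diagram is a lift of the corresponding functor on $\Dcal$: the iterated external tensor products via \Cref{prop:lift-multiex}(1), and the inverse-image functors $\overline{\tau^*}$ along the various permutation isomorphisms $\tau$ via \Cref{prop_lift-exfunct}(1). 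Likewise, each arrow is a lift of the corresponding natural isomorphism on $\Dcal$: the instances of $\overline{a}$ and $\overline{c}$ via \Cref{prop:lift-multinat} (compare \Cref{lem:lift-asso+symm}), and the instances of $\overline{m}$ via \Cref{prop:lift-multinat} (compare \Cref{prop:lift-ETS}). Since, for each $i$, the abelian category $\A(\beta_{S_i})$ is generated under (co)kernels by the image of $\pi_{S_i} \colon \Dcal(S_i) \rightarrow \A(\beta_{S_i})$, and since all the functors appearing are multi-exact, it suffices to check that the diagram commutes after restricting all functors along $(\pi_{S_1},\pi_{S_2},\pi_{S_3})$.

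First I would record the relevant defining identities of the lifted data. By \Cref{prop:lift-multiex}(1), $\pi_{S \times S'} \circ (- \boxtimes_{\Dcal} -) = (- \overline{\boxtimes} -) \circ (\pi_S,\pi_{S'})$; by \Cref{prop_lift-exfunct}(1) applied to the representation morphism given by a permutation $\tau$, $\pi_T \circ \tau^* = \overline{\tau^*} \circ \pi_S$; and, by the characterizing property of the lift of a $2$-morphism in \Cref{prop:lift-multinat}, the restriction of $\overline{a}$ (respectively $\overline{c}$, respectively $\overline{m}$) along the appropriate product of quotient functors $\pi_{S_\bullet}$ coincides with $\pi$ applied to $a_{\Dcal}$ (respectively $c_{\Dcal}$, respectively $m_{\Dcal}$). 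Using these identities to rewrite each vertex and each edge, the restriction along $(\pi_{S_1},\pi_{S_2},\pi_{S_3})$ of the ($ac$ETS) diagram for $(\overline{\boxtimes},\overline{m},\overline{a},\overline{c})$ is identified with the image, under post-composition with the functor $\pi_{S_1 \times S_2 \times S_3}$, of the ($ac$ETS) diagram for $(\boxtimes_{\Dcal},m_{\Dcal},a_{\Dcal},c_{\Dcal})$. The latter commutes by the hypothesis that $a_{\Dcal}$ and $c_{\Dcal}$ are compatible, and post-composing a commutative diagram of functors with a fixed functor preserves commutativity; hence the restricted diagram commutes, and therefore so does ($ac$ETS) for $(\overline{\boxtimes},\overline{m},\overline{a},\overline{c})$. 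As $S_1, S_2, S_3$ are arbitrary, this shows that $\overline{a}$ and $\overline{c}$ are compatible. Equivalently, one may phrase the argument purely in terms of $2$-morphisms, as in the other proofs of \Cref{sect:lift-ETS}: each of the two composites making up ($ac$ETS) is a vertical composite of $2$-morphisms of multi-linear representations, each factor of which is the lift of the corresponding factor on $\Dcal$; since $\lambda \mapsto \overline{\lambda}$ is compatible with composition (\Cref{rem:lift-multinat}(2)) and injective on natural transformations of multi-exact functors (by the uniqueness in \Cref{prop:lift-multinat}), the equality of the two composites on $\Dcal$ forces their equality on $\A(\beta)$.

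I expect the only genuine obstacle to be purely bookkeeping: the ($ac$ETS) diagram is sizeable and involves several distinct permutation functors $\tau^*_\bullet$ together with the monoidality isomorphisms $m$ and $\overline{m}$ intertwining them with the external tensor products, so each arrow has to be tracked carefully through the restriction along $\pi$. This becomes routine once one notices that $\pi \colon \Dcal \rightarrow \A(\beta)$ is, by the construction of \Cref{prop_lifting-fib}(1) and the normalization built into \Cref{prop_lift-exfunct}(1), a \emph{strict} morphism of $\Scal$-fibered categories — its transition isomorphisms are the identities — and is strictly compatible with the external tensor products in the sense of the identities recorded above; with this observation there is nothing conceptually new beyond \Cref{lem:lift-asso+symm}.
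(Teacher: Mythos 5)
Your proposal is correct and follows essentially the same route as the paper: the paper's proof simply invokes the compatibility of the lifting $\lambda \mapsto \overline{\lambda}$ with composition of $2$-morphisms (\Cref{rem:lift-multinat}(2)) together with the uniqueness in \Cref{prop:lift-multinat} to transport the commutativity of ($ac$ETS) from $\Dcal$ to $\A(\beta)$, which is exactly your second phrasing, and your ``check on generators'' argument is just the explicit unpacking of that uniqueness statement.
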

\begin{proof}
	We need to check that the external associativity constraint $\overline{a}$ and the external commutativity constraint $\overline{c}$ satisfy condition ($ac$ETS) of \Cref{defn:acETS}. In view of \Cref{rem:lift-multinat}(2), this follows from the fact that the same condition is satisfied by the corresponding natural isomorphisms on the $\Scal$-fibered category $\Dcal$.
\end{proof}

\begin{rem}
	In a similar way, it is possible to lift external tensor structures on morphisms of $\Scal$-fibered categories (see \Cref{defn:ETS-mor}) to the level of universal abelian factorizations; the same applies to the associativity and symmetry properties of tensor structures on morphisms (see \Cref{defn:ETS-mor-asso-symm}). However, we refrain from stating these results precisely since they do not seem to be relevant for our applications; the interested reader will find the correct statements without difficulty.
\end{rem}


\begin{thebibliography}{9}
	\bibitem[SGA1]{SGA1} 
	\textit{Revêtements étales et groupe fondamental}. Séminaire de Géometrie Algébrique du Bois-Marie, dirigé par A. Grothendieck, augmenté de deux exposés de Mme M. Raynaud, Lecture Notes in Math., 224, Springer-Verlag, Berlin, 1964.
	\bibitem[Ara13]{Ara-mot} D. Arapura,
	\textit{An abelian category of motivic sheaves}. Adv. Math. \textbf{233}, 2013, 135–195.
	\bibitem[Ara19]{Ara-rev} D. Arapura,
	\textit{Motivic sheaves revisited}. Joural of Pure and Applied Algebra, vol. 227, Issue 8, 2019.
	\bibitem[BVCL18]{BVCL18} L. Barbieri-Viale, O. Caramello, L. Lafforgue,
	\textit{Syntactic categories for Nori motives}. Sel. Math. 24(4), 3619–3648 (2018).
	\bibitem[BVP18]{BV-P} L. Barbieri-Viale, M. Prest,
	\textit{Definable categories and $\mathbb{T}$-motives}. Rend. Sem. Mat. Univ. Padova 139 (2018), pp. 205–224.
	\bibitem[BVHP20]{BVHP20} L. Barbieri-Viale, A. Huber, M. Prest,
	\textit{Tensor structure for Nori motives}. Pacific Journal of Mathematics \textbf{206} (2020), pp. 1–30.
	\bibitem[Del01]{DelVoe} P. Deligne,
	\textit{Voevodsky's lectures on cross functors}. Motivic Homotopy Theory program, IAS Princeton, Fall 2001. 
	\bibitem[Fre65]{Freyd} P. Freyd,
	\textit{Representations in abelian categories}. 1966 Proc. Conf. Categorical Algebra (La Jolla, Calif., 1965) pp. 95–120 Springer, New York.
	\bibitem[HMS17]{HMS17} A. Huber, S. Müller-Stachs,
	\textit{Periods and Nori motives}. Ergebnisse der Mathematik
	und ihrer Grenzgebiete. 3. Folge. A Series of Modern Surveys in Mathematics [Results in Mathematics and Related Areas. 3rd Series. A Series of Modern Surveys in Mathematics], vol. 65, Springer, Cham, 2017, With contributions by Benjamin Friedrich and Jonas von Wangenheim.
	\bibitem[Ivo17]{IvPerv} F. Ivorra,
	\textit{Perverse Nori motives}. Math. Res. Lett. 24 (2017), no. 4, pp. 1097–1131.
	\bibitem[IM19]{IM19} F. Ivorra, S. Morel,
	\textit{The four operations on perverse motives}. Preprint, available at \url{https://arxiv.org/abs/1901.02096}.
	\bibitem[Mac63]{Mac63} S. Mac Lane,
	\textit{Natural associativity and commutativity}. Rice University Studies 49, 1963, no. 4, pp. 28–46.
	\bibitem[Nee01]{NeeTri} A. Neeman,
	\textit{Triangulated categories}. Annals of Mathematics Studies, vol. 148, Princeton University Press, Princeton, NJ, 2001.
	\bibitem[Ter23T]{Ter23Fib} L. Terenzi,
	\textit{Tensor structures on fibered categories}. Preprint, available at \url{https://sites.google.com/view/luca-terenzi/research}.
	\bibitem[Ter23N]{Ter23Nori} L. Terenzi,
	\textit{Tensor structure on perverse Nori motives}. Preprint, available at \url{https://sites.google.com/view/luca-terenzi/research}.
\end{thebibliography}
\end{document}